\newcommand\Ha{\operatorname{Ha}^{(1)}}
\newcommand{\tcal}{\mathcal{T}}
\newcommand{\fcal}{\mathcal{F}}
\newcommand{\kahler}{K\"ahler}
\newcommand{\D}{\mathbf D}
\def\sq2{\sqrt{2}}
\def\vol{\mathrm{vol}}
\def\t2{{\mathbb T}^2}
\def\s2{{\mathbb S}^2}
\def\R{\mathbb{R}}
\def\Z{\mathbb{Z}}
\def\C{\mathbb{C}}
\def\Lap{\triangle}
\newcommand\pa{\partial}
\newcommand{\RR}{{\mathbb R}}
\newcommand\Op{\operatorname{Op}}
\newcommand\Neu{{\operatorname{Neu}}}
\def\sq2{\sqrt{2}}
\def\vol{\mathrm{vol}}
\def\t2{{\mathbb T}^2}
\def\s2{{\mathbb S}^2}
\def\R{\mathbb{R}}
\def\Z{\mathbb{Z}}
\def\C{\mathbb{C}}
\def\Lap{\triangle}
\def\hto0{\xrightarrow{\hbar\to 0}}
\def\rto0{\xrightarrow{r\to 0}}
\newcommand{\HH}{\mathcal H}
\newcommand{\cE}{\mathcal E}
\newcommand{\cH}{\mathcal H}
\newcommand{\cP}{\mathcal P}
\newcommand{\IR}{\mathbb{R}} %real
\newcommand{\X}{{\mathbf X_{\Gamma}}}
\renewcommand{\le}{\leqslant}
\renewcommand{\ge}{\geqslant}
\newcommand{\E}{\mathbb E}
\newcommand{\Hh}{{\mathbb H}}
\newcommand{\pdo}{\psi DO}
\newcommand{\dbar}{\bar\partial}
\newcommand{\ddbar}{\partial\dbar}
\newcommand{\cU}{{\mathcal U}}
\newcommand{\B}{{\mathbb B}}
\renewcommand{\Re}{{\operatorname{Re\,}}}
\newcommand{\half}{{\frac{1}{2}}}
\renewcommand{\phi}{\varphi}
\newcommand{\ccal}{\mathcal{C}}
\newcommand{\dcal}{\mathcal{D}}
\newcommand{\ecal}{\mathcal{E}}
\newcommand{\hcal}{\mathcal{H}}
\newcommand{\mcal}{\mathcal{M}}
\newcommand{\ncal}{\mathcal{N}}
\newcommand{\pcal}{\mathcal{P}}
\newcommand{\ucal}{\mathcal{U}}
\newcommand{\oit}{\operatorname{ it}}
\renewcommand{\H}{{\mathbf H}}
\newcommand{\qcal}{\mathcal{Q}}
\newcommand{\sa}{\sigma_A}
\newcommand{\varphijk}{\varphi_{j_k}}
\newcommand{\de}{\delta}
\newtheorem{theo}{{\sc Theorem}}[section]
\newtheorem{cor}[theo]{{\sc Corollary}}
\newtheorem{conj}[theo]{{\sc Conjecture}}
\newtheorem{lem}[theo]{{\sc Lemma}}
\newtheorem{prop}[theo]{{\sc Proposition}}
\newtheorem{prob}[theo]{{\sc Problem}}
\newenvironment{rem}{\medskip\noindent{\it Remark:\/} }{\medskip}
\newenvironment{defin}{\medskip\noindent{\it Definition:\/} }{\medskip}
\title[Recent developments in mathematical Quantum Chaos ]
{Recent developments in mathematical Quantum Chaos\\ Preliminary
Version/Comments Appreciated}
\author{Steve Zelditch }
\address{Department of Mathematics, Johns Hopkins University, Baltimore,
MD 21218, USA} \email{zel@math.jhu.edu}
\thanks{Research partially supported by  NSF grant
  DMS 0904252.}
\date{\today}
\begin{document}

\begin{abstract} This is a survey of recent results on quantum
ergodicity, specifically on the large energy limits of matrix
elements relative to eigenfunctions of the Laplacian. It is mainly
devoted to QUE (quantum unique ergodicity) results, i.e. results
on the possible existence of a sparse subsequence of
eigenfunctions with anomalous concentration. We cover  the lower
bounds on entropies of quantum limit measures due to Anantharaman,
Nonnenmacher, and Rivi\`ere on compact Riemannian manifolds with
Anosov flow. These lower bounds give new constraints on the
possible quantum limits. We also cover  the non-QUE result of
Hassell in the case of the Bunimovich stadium. We include some
discussion of Hecke eigenfunctions and recent results of
Soundararajan completing Lindenstrauss' QUE result,  in the
context of matrix elements for Fourier integral operators.
Finally, in answer to the potential question `why study matrix
elements' it presents an application of the author to the geometry
of nodal sets.

\end{abstract}

\maketitle

\tableofcontents

 Quantum chaos on Riemannian manifolds $(M, g)$ is
concerned with the asymptotics of eigenvalues and orthonormal
bases of eigenfunctions \begin{equation} \label{DELTAPHI} \Delta
\phi_j = \lambda_j^2 \phi_j,\;\;\;\;\;\;\;\;\langle \phi_j, \phi_k
\rangle = \delta_{jk} \end{equation} of the Laplacian $\Delta$
when the geodesic flow $g^t: S^*_g M \to S^*_g M$ is ergodic or
Anosov or in some other sense chaotic. Model examples include
compact or finite volume hyperbolic surfaces $\X = \Gamma
\backslash \H$, where $\H$ is the hyperbolic plane and $\Gamma
\subset PSL(2, \R)$ is a discrete subgroup.  Of special interest
are the arithmetic quotients when $\Gamma$ is a discrete
arithmetic subgroup. Other model example include Euclidean domains
with ergodic billiards such as the Bunimovich stadium.
 The general question is,  how does the dynamics of the
geodesic (or billiard) flow make itself felt in the eigenvalues
and eigenfunctions (or more general solutions of the wave
equation), which by the correspondence principle of quantum
mechanics must reproduce the classical limit as the eigenvalue
tends to infinity?

To avoid confusion, we emphasize that we denote eigenvalues of the
Laplacian by $\lambda_j^2$. They are usually viewed as energies
$E_j = \lambda_j^2$.  Their square roots  $\lambda_j$ are called
the frequencies.

The most basic quantities testing the asymptotics of
eigenfunctions are the matrix elements $\langle A \phi_j, \phi_j
\rangle$ of pseudo-differential operators relative to the
orthonormal basis of  eigenfunctions. As explained below, these
matrix elements measure the expected value of the observable $A$
in the energy state $\phi_j$. Much of the work in quantum
ergodicity since the pioneering work of A. I. Schnirelman
\cite{Sh.1}  is devoted to the study of the limits of $\langle A
\phi_j, \phi_j \rangle$ as $\lambda_j \to \infty$. Although
difficult to determine, these limits are still the most accessible
aspects of eigenfunctions. A sequence of eigenfunctions is called
ergodic or diffuse if the limit tends to $\int_{S^*_g M} \sigma_A
d\mu_L$ where $\sigma_A$ is the prinicipal symbol of $A$ and
$d\mu_L$ is the Liouville measure.

  The first priority of this survey is to cover the  important new
results on the so-called scarring or  QUE (quantum unique
ergodicity) problem. Roughly speaking, the problem is whether
every orthonormal basis becomes equidistributed in phase space
with respect to Liouville measure  as the eigenvalue tends to
infinity, i.e. $$\langle A \phi_j, \phi_j \rangle \to \int_{S^*_g
M} \sigma_A d\mu_L$$ or whether there exists a sparse exceptional
sequence which has a singular concentration.  One series of
positive results due to N. Anantharaman \cite{A},  N.
Anantharaman-S. Nonnenmacher \cite{AN} (see also \cite{ANK}), and
G. Rivi\`ere \cite{Riv} flows from the study of entropies of
quantum limit measures and are based on a difficult and complex
microlocal PDE analysis of the long time behavior of the wave
group on manifolds with Anosov geodesic flow. These results given
lower bounds on entropies  of quantum limit measures which imply
that the limit measures  must be to some extent
 diffuse. Another series
of results due  R. Soundararajan \cite{Sound1} use $L$-function
methods to  complete the near QUE result of E. Lindenstrauss
\cite{LIND} for  Hecke eigenfunctions in the arithmetic case.  In
the negative direction, A. Hassell \cite{Has} uses microlocal
methods to proved the long-standing conjecture that generic stadia
are not QUE but rather have exceptional sequences of bouncing ball
type modes which concentrate or scar along  a Lagrangian
submanifold.

QUE is not the only interesting problem in quantum chaos. Another
series of results in the arithmetic case are the remarkably sharp
Luo-Sarnak asymptotics \cite{LS,LS2}  of the variances $$\sum_{j:
\lambda_j \leq \lambda} |\langle A \phi_j, \phi_j \rangle -
\int_{S^*_g M} \sigma_A d\mu_L|^2 $$ for holomorphic Hecke
eigenforms and their recent generalization to smooth Maass forms
by Zhao \cite{Zh}. Physicists (see e.g. \cite{FP}) have speculated
that the variance is related to the classical auto-correlation
function of the geodesic flow. The Luo-Sarnak-Zhao results
partially confirm this conjecture in the special case of
arithmetic surfaces and Hecke eigenfunctions, but correct it with
an extra arithmetic factor.

A final direction we survey is the applications of quantum
ergodicity to problems in nodal geometry. Given the large amount
of work on matrix elements $\langle A \phi_j, \phi_j \rangle$, it
is natural for geometric or PDE  analysts   to ask how one can use
such matrix elements to study classical problems on eigenfunctions
such as growth and distributions of zeros and critical points. One
of our themes is that for real analytic $(M, g)$, logarithms $\log
|\phi_j^{\C}(\zeta)|^2$ of analytic continuations of ergodic
eigenfunctions are asymptotically `maximal pluri-subharmonic
functions' in Grauert tubes and as a result their zero sets have a
special limit distribution \cite{Z5}. Thus, ergodicity causes
maximal oscillation in the real and complex domain and gives rise
to special distributions of zeros and (conjecturally) to critical
points.

In preparing this survey, we were only too aware of the large
number of surveys and expository articles that already exist on
the QUE and entropy problems, particularly
\cite{AN2,ANK,CV3,LIND2,Sar3} (and for earlier work
\cite{Z1,Z2,M,Sar}).  To a large degree we closely follow the
original sources, emphasizing  intuition over the finer technical
details. Although we go over  much of the same material in what
are now standard ways, we also go over some topics that do not
seem as well known, and possibly could be improved, and we  fill
in some gaps in the literature. In particular, most expositions
emphasize the quantum entropic uncertainty principle of \cite{AN}
due to its structural nature. So instead we emphasize the ideas of
\cite{A}, which are less structural but in some ways more
geometric.

One of the novelties is the study in \S \ref{FIOME} of matrix
elements $\langle F \phi_j, \phi_j \rangle$ of Fourier integral
operators $F$ relative to the eigenfunctions. Hecke operators are
examples of such $F$ but restrictions to hypersurfaces provide a
different kind of $F$ which have recently come up in quantum
ergodic restriction theory \cite{TZ3}.  We give a rather simple
result on quantum limits for Fourier integral operators which
answers the question, ``what invariance property do quantum limit
measures of Hecke eigenfunctions possess?". Hecke analysts have
worked with a partial quasi-invariance principle due to
Rudnick-Sarnak, and although the exact invariance principle may
not simplify the known proofs of QUE  it is of interest to know
that one exists. We plan to put complete proofs in a future
article.

Another aspect of eigenfunctions is their possible concentration
of eigenfunctions around closed hyperbolic geodesic. Mass
concentration within thin or shrinking tubes around such geodesics
has been studied by S.Nonnenmacher-A.Voros \cite{NV}, Y. Colin de
Verdi\`ere-B. Parisse \cite{CVP}, J. Toth and the author
\cite{TZ2} and outside of such tubes by N. Burq-M. Zworski
\cite{BZ} and H. Christianson \cite{Chr}. Very striking and
surprising studies of analogous behavior in the `quantum cat map'
setting are in \cite{FNB,FN}.

%%%%
 %We also briefly discuss the  reduction to the ideal boundary at
%infinity of quantum ergodic eigenfunctions on compact hyperbolic
%surfaces. This provides a kind of duality between the geodesic
%flow and the $\Gamma$ action, and converts the study of
%eigenfunctions  to their  boundary values, or alternatively to
%their Fourier transform.
%%%%%

A few words on the limitations of this survey. Due to the author's
lack of knowledge, the $L$-function or arithmetic methods in QUE
will not be discussed in more than a cursory way. Rather we
concentrate on the PDE or microlocal aspects of quantum
ergodicity. By necessity, microlocal analytical proofs must make
direct connections between spectrum and geodesic flow, and cannot
by-pass this obstacle by taking a special arithmetic route.
Moreover, we restrict the setting to Laplacians on Riemannian
manifolds for simplicity of exposition, but it is just a special
case of the semi-classical asymptotics (as the Planck constant
tends to zero) of spectra of Schr\"odinger operators $h^2 \Delta +
V$. Schr\"odinger eigenfunctions are equally relevant to
physicists and mathematics. Another important setting is that of
quantum `cat maps' on tori and other \kahler \; manifolds. In this
context, the quantum dynamics is defined by quantizations of
symplectic maps of K\"ahler  manifolds acting on spaces of
holomorphic sections of powers of a positive line bundle. The
mathematics is quite similar to the Riemannian setting, and
results in the cat map setting often suggest analogues in the
Riemannian setting. In particular, the scarring results of
Faure-Nonnemacher- de Bi\`evre \cite{FNB} and the variance results
of Kurlberg-Rudnick \cite{KR} and Schubert \cite{Schu3} are very
relevant to the scarring and variance results we present in the
Riemannian setting.

Although we do not discuss it here, a good portion of the
literature of quantum chaos is devoted to numerics and physicists'
heuristics. Some classics are \cite{Ber,He,He2}.  The article of
A. Barnett \cite{Bar} (and in the cat map case \cite{FNB,FN} ) is
a relatively recent discussion of the numerical results in quantum
ergodicity. There is a wealth of phenomenology standing behind the
rigorous results and heuristic proofs in this field. Much of it
still lies far beyond the scope of the current mathematical
techniques.

Finally, we thank N. Anantharaman, A. Hassell, and S. Nonnenmacher
for explanations of important points in their work.  We have
incorporated many of their clarifications in the survey.  We also
thank K. Burns, H. Hezari, J. Franks,  E. Lindenstrauss, P.
Sarnak, A. Wilkinson  and J. Wunsch for further
 clarifications  and corrections. Of course, responsibility for
 any
 errors that remain is the author's.

\section{Wave equation and geodesic flow}

Classical mechanics is the study of Hamiltonians (real valued
functions) on phase space (a symplectic manifold) and their
Hamiltonian flows.  Quantum mechanics is the study of Hermitian
operators on Hilbert spaces and the unitary groups they generate.
The model quantum Hamiltonians we will discuss are Laplacians
$\Delta$ on  compact Riemannian manifolds $(M,g)$ (with or without
boundary). Throughout we denote the dimension of $M$ by $d = \dim
M$.

\subsection{\label{GEO} Geodesic flow and classical mechanics} The classical  phase space in this setting  is the
cotangent bundle $T^*M$ of $M$, equipped with its canonical
symplectic form $\sum_i dx_i \wedge d\xi_i$. The metric defines
the Hamiltonian $H(x,\xi) = |\xi|_g =  \sqrt{\sum_{ij = 1}^n
g^{ij}(x) \xi_i \xi_j}$ on $T^*M$, where  $g_{ij} =
g(\frac{\partial}{\partial x_i},\frac{\partial}{\partial x_j}) $,
$[g^{ij}]$ is the inverse matrix to $[g_{ij}]$. We denote the
volume density of $(M, g)$ by $dVol$ and the corresponding inner
product on $L^2(M)$ by $\langle f, g \rangle$.  The unit (co-)
ball bundle is denoted $B^*M = \{(x, \xi): |\xi| \leq 1\}.$

The classical evolution (dynamics) is given by the geodesic flow
of $(M, g)$, i.e. the  Hamiltonian flow $g^t$ of $H$ on $T^*M$. By
definition, $g^t(x, \xi) = (x_t, \xi_t)$, where $(x_t, \xi_t)$ is
the terminal tangent vector at time $t$ of the unit speed geodesic
starting at $x$ in the direction $\xi$. Here and below, we often
identify $T^*M$ with the tangent bundle $TM$ using the metric to
simplify the geometric description. The geodesic flow preserves
the energy surfaces $\{H = E\}$ which are the co-sphere bundles
$S^*_E M$.
 Due to the homogeneity of $H$, the
flow on any energy surface $\{H = E\}$ is equivalent to that on
the co-sphere bundle $S^*M = \{ H=1\}.$

We define the   {\it Liouville measure} $\mu_L$  on $S^*M$ to be
the surface measure $d\mu_L = \frac{dx d\xi}{d H}$  induced by the
Hamiltonian $H = |\xi|_g$ and by the symplectic volume measure $dx
d\xi$
 on $T^*M$. The geodesic flow (like any Hamiltonian flow)
 preserves $dx d\xi$. It also preserves Liouville measue on
 $S^*M$.

 \subsection{\label{EA} Ergodic and Anosov geodesic flows}

 The geodesic flow is called {\it ergodic} if the unitary operator
 \begin{equation} V^t a(\rho) = a (g^t (\rho)) \end{equation}
 on $L^2(S^* M, d\mu_L)$ has no invariant $L^2$ functions besides
 constant functions. Equivalently, any invariant set $E \subset S^*M$  has either
 zero measure or full measure.

A geodesic flow $g^t$ is called {\it Anosov} on on $S^*_g M$ if
 the tangent bundle $T S^*_g M$
splits into $g^t$ invariant sub-bundles $E^u(\rho)\oplus E^s(\rho)
\oplus \IR\,X_H(\rho)\, $ where $E^u$ is the unstable subspace and
$E^s$ the stable subspace. They are defined by
$$\begin{array}{ll} ||d g^t v|| \leq C e^{- \lambda t} ||v||, &
\forall v \in E^s, t \geq 0, \\ & \\
||d g^t v|| \leq C e^{ \lambda t} ||v||, & \forall v \in E^u, t
\leq 0. \end{array} $$ The sub-bundles are integrable and give
stable, resp. unstable foliations $W^s, W^u$. The leaves through
$x$ are denoted by $W^s(x), W^u(x)$. Thus, the geodesic flow
contracts everything exponentially fast along the stable leaves
and expands everything exponentially fast along the unstable
leaves. Anosov geodesic flows are ergodic. We refer to \cite{Kl}
for background.

The unstable Jacobian $J^u(\rho)$ at $\rho$ is defined by
\begin{equation} \label{UNSTABLEJAC} J^u(\rho)=\det\big(dg^{-1}_{|E^u(g^1\rho)}\big). \end{equation}

Define \begin{equation} \label{LAMBDAnosov}\begin{array}{lll}
\Lambda & = & - \sup_{\rho \in S^*M} \log ||J^u(\rho)|| \\ && \\
& =  & \inf_{\mu \in \mcal_I} \left| \int_{S^*M} \log J^u(\rho)
d\mu(\rho) \right|> 0. \end{array} \end{equation} If $(M, g)$ is
of dimension $d$ and constant curvature $-1$ then $\Lambda = d -
1$. Here, $\mcal_I$ is the set of invariant probability measures
for $g^t$.

 The maximal expansion
rate of the geodesic flow is defined by
\begin{equation}\label{LAMBDAMAX}
\lambda_{\max} = \lim_{t \to \infty} \frac{1}{t} \log \sup_{\rho
\in S^* M} ||d g^t_{\rho}||. \end{equation}

The derivative $dg^t$ may be expressed in terms of Jacobi fields
(see \cite{Kl}, Lemma 3.1.17). In the hyperbolic (Anosov case),
the stable/unstable sub-bundles are spanned by unstable Jacobi
fields $Y^u_1, \dots, Y^u_{d-1}$  whose norms grow exponentially as $t \to + \infty$
and decay exponentially as $t \to - \infty$,
resp. stable Jacobi fields $Y^s_1, \dots, Y^s_{d-1}$  whose norms
behave in the opposite fashion.  The unstable Jacobian is the norm $||Y^u_1 \wedge \cdots
\wedge Y_{d-1}^u||. $

\subsection{\label{REVIEW} Eigenfunctions and eigenvalues of
$\Delta$}

The quantization of the Hamiltonian $H$ is the square root
$\sqrt{\Delta}$ of the positive Laplacian,  $$\Delta = -
\frac{1}{\sqrt{g}}\sum_{i,j=1}^n \frac{\partial}{\partial
x_i}g^{ij} g \frac{\partial}{\partial x_j} $$  of $(M,g)$. Here,
$g = {\rm det} [g_{ij}].$ The eigenvalue problem on a compact
Riemannian manifold
$$\Delta \phi_j = \lambda_j^2 \phi_j,\;\;\;\;\;\;\;\;\langle \phi_j, \phi_k
\rangle = \delta_{jk}$$ is dual under the Fourier transform to the
wave equation. Here, $\{\phi_j\}$ is a choice of orthonormal basis
of eigenfunctions, which is not unique if the eigenvalues have
multiplicities $> 1.$ The individual eigenfunctions are difficult
to study directly, and so one generally forms the spectral
projections kernel,
\begin{equation} \label{ELAMBDA} E(\lambda, x, y) = \sum_{j: \lambda_j \leq
\lambda} \phi_j(x) \phi_j(y). \end{equation} Semi-classical
asymptotics is the study of the $\lambda \to \infty$ limit of the
spectral data $\{\phi_j, \lambda_j\}$  or of $E(\lambda, x, y)$.

\subsection{\label{HUP} Eigenvalues and Planck's constant}

Although it is only a notational issue in this setting, it is
conceptually useful to identify inverse frequencies with Planck's
constant, i.e. to define
\begin{equation} \hbar_j = \lambda_j^{-1}. \end{equation}
The eigenvalue problem then takes the form of a Schr\"odinger
equation $$\hbar^2_j \Delta \phi_j = \phi_j. $$  Thus the high
frequency limit is a special case of the semi-classical limit
$\hbar \to 0$.

\subsection{Eigenvalue Multiplicities}

The multiplicity of an eigenvalue $\lambda_j^2$ is the dimension
\begin{equation} m(\lambda_j) = \dim \ker (\Delta - \lambda_j^2)
\end{equation}
of the eigenspace. As will be discussed in \S \ref{WLSECT}, the
general Weyl law gives the bound $m(\lambda_j) \leq C \lambda_j^{d
- 1}$ in dimension $d$. In the case of negatively curved
manifolds, the bound has  been improved by a $(\log
\lambda_j)^{-1}$ factor. Thus it is possible for sequences of
eigenvalues to have the huge multiplicities $\lambda_j^{d - 1}
(\log \lambda_j)^{-1}$. When such multiplicities occur, it is
possible to build up superpositions of eigenfunctions which have
special concentration properties (see \S \ref{HYP}). Hassell's
scarring theorem is based on the non-existence of such
multiplicities for certain sequences of eigenvalues (\S
\ref{HAS}).

 In fact such multiplicities do occur in
the quantum cap map setting for sparse sequences of eigenvalues
and are responsible for the very strange behavior of eigenstates
in that case \cite{FNB,FN}. It is unknown if such sequences exist
in the Riemannian Anosov setting. This is one of the fundamental
obstacles to understanding whether or how scarring occurs in the
Riemannian case.

\subsection{Quantum evolution}

Quantum evolution is given by the  wave group
$$U^t =
e^{i t \sqrt{\Delta}}, $$ which in a rigorous sense is  the
quantization of the geodesic flow $\Phi^t$. It is generated by the
pseudo-differential operator $\sqrt{\Delta}$, as defined by the
spectral theorem (it has the same eigenfunctions as $\Delta$ and
the eigenvalues $\lambda_j$).  The (Schwartz) kernel of the wave
group on a compact Riemannian manifold can be represented in terms
of the spectral data by
\begin{equation} \label{UTEFNS} U^t (x,y) = \sum_j e^{it \lambda_j} \phi_j(x)
\phi_j(y),\end{equation} or equivalently as the Fourier transform
$\int_{\R} e^{i t \lambda} dE(\lambda, x, y)$ of the spectral
projections. Hence spectral asymptotics is often studied through
the large time behavior of the wave group.

Evolution can be studied on the level of `points' or on the level
of observables. Evolution of points in classical mechanics gives
the orbits or trajectories of the geodesic flow (i.e. the
parameterized geodesics). Evolution $\psi_t = U^t \psi$ of wave
functions  gives the `Schr\"odinger picture' of  quantum
mechanics. Eigenfunctions arise in quantum mechanics as stationary
states, i.e. states $\psi$ for which the probability measure  $|
\psi(t, x)|^2 dvol$ is constant where $\psi(t, x) = U^t \psi(x)$
is the evolving state. This follows from the fact that
\begin{equation} \label{UNIT} U^t \phi_k = e^{ i t \lambda_k} \phi_k \end{equation}
and that $|e^{i t \lambda_k}| = 1$. By unitarity $U^{t*} =
U^{-t}$.

As an alternative one could define  the quantum evolution as in
\cite{AN} to be the semi-classical Schr\"odinger propagator
\begin{equation} \label{UH} U_{\hbar}^t : = e^{- i t \hbar \Delta}. \end{equation}  This replaces  the  homogeneous
Fourier integral operator $U^t$ by a semi-classical Fourier
integral operator. The theories are quite parallel; we  mainly
keep to the homogeneous theory in this survey (See \cite{DSj} and
\cite{EZ}) for background on these notions).

\subsection{\label{O} Observables}
In the classical setting, observables are (real-valued) functions
on $T^*M$ which we usually take to be homogeneous of degree zero
(or as zeroth order symbols). In the quantum setting, observables
are $\pdo$'s (pseudodifferential operators)  of all orders;  we
often restrict to the subalgebra $\Psi^0$ of $\pdo$'s of order
zero.  We denote by $\Psi^m(M)$ the subspace of pseudodifferential
operators of order $m$. The algebra is defined by constructing a
quantization $Op$ from an algebra of symbols $a \in S^m(T^*M)$ of
order $m$ (polyhomogeneous functions on $T^*M \backslash 0)$ to
$\Psi^m$. A function on $T^*M \backslash 0$ is called
polyhomogeneous or a classical symbol of order $m$, $a \in
S^m_{phg}$,  if it admits an asymptotic expansion,
$$a(x, \xi) \sim \sum_{j = 0}^{\infty} a_{m - j}(x, \xi), \;\;\;
\mbox{where}\;\; a_k(x, \tau \xi) = \tau^{k} a_k(x, \xi), \;\;
(|\xi| \geq 1  )$$ for some $m$ (called its order). The asympotics
hold in the sense that
$$a - \sum_{j = 0}^{R} a_{m - j} \in S^{m - R - 1},$$
where $\sigma \in S^k$ if $\sup (1 + |\xi|)^{j - k}| D^{\alpha}_x
D^{\beta}_{\xi} \sigma (x, \xi)| < +\infty$ for all compact set
and for all $\alpha,\beta, j$. There is a semi-classical analogue
in the $\hbar$ setting where the complete symbol $a_{\hbar}(x,
\xi)$ depends on $\hbar$ in a similar poly-homogeneous way,
$$a_{\hbar} \sim  \hbar^{-m} \sum_{j = 0}^{R} \hbar^j a_{m - j}.$$
We refer to \cite{DSj,GSt,EZ,Dui} for background on microlocal
analysis (or semi-classical analysis), i.e. for the theory of
pseudo-differential and Fourier integral operators.

The main idea is that such observables have good classical limits.
The high frequency limit is mirrored in the behavior of the symbol
as $|\xi| \to \infty$.

An useful type of observable is a smooth cutoff $\chi_U(x, \xi)$
to some open set $U \subset S^*M$ (it is homogeneous so it cuts
off to the cone through $U$ in $T^*M$). That is, a function which
is one in a large ball in $U$ and zero outside $U$. Then
$Op(\chi_U)$ is called a microlocal cutoff to $U$. Its expectation
values in the states $\phi_k$ give the phase space mass of the
state in $U$ or equivalently the probability amplitude that the
particle is in $U$. (The modulus square of the amplitude is the
probability).

There are several standard useful notations for quantizations of
symbols $a$  to operators $A$ that we use in this survey. We use
the notation $Op(a)$ or $\hat{a}$ interchangeably.

\subsection{\label{HUPSECT} Heisenberg uncertainty prinicple}

The Heisenberg uncertainty principle is the heuristic principle
that one cannot measure things in   regions of phase space where
the product of the widths in configuration and momentum directions
is  $\leq \hbar$.

It is  useful to microlocalize to sets which shrink as $\hbar \to
0$ using $\hbar $ (or $\lambda$)-dependent cutoffs such as
$\chi_U(\lambda^{\delta} (x, \xi))$. The Heisenberg uncertainty
principle is manifested in pseudo-differential calculus in the
difficulty (or impossibility) of defining pseudo-differential
cut-off operators   $Op(\chi_U(\lambda^{\delta} (x, \xi))$  when
$\delta \geq \half$. That is, such small scale cutoffs do not obey
the usual rules of semi-classical analysis (behavior of symbols
under composition).  The uncertainty principle allows one to study
eigenfunctions by microlocal methods  in configuration space balls
$B(x_0, \lambda_j^{-1})$ of radius $\hbar$ (since there is no
constraint on the height) or in a phase space ball of radius
$\lambda_j^{-1/2}$.

\subsection{Matrix elements and Wigner distributions}

These lectures are mainly concerned with the matrix elements
$\langle Op(a) \phi_j, \phi_k \rangle$ of an observable relative
to the eigenfunctions. The diagonal matrix elements
\begin{equation} \label{RHOJ} \rho_j (Op(a)): = \langle Op(a) \phi_j, \phi_j \rangle \end{equation}  are interpreted in
quantum mechanics as the expected value of the observable $Op(a)$
in the energy state $\phi_j$. The off-diagonal matrix elements are
interpreted as transition amplitudes. Here, and below, an
amplitude is a complex number whose modulus square is a
probability.

If we fix the quantization $a \to Op(a)$, then the matrix elements
can be represented by {\it Wigner distributions}. In the diagonal
case, we define  $W_k \in \dcal'(S^* M)$ by
\begin{equation} \label{WIGNER} \int_{S^* M} a dW_k : =
\langle Op(a) \phi_k, \phi_k \rangle. \end{equation}

\subsection{Evolution of Observables}

Evolution of observables  is particularly relevant in quantum
chaos and is known in physics as the `Heisenberg picture'.

The evolution of observables  in the Heisenberg picture is defined
by
\begin{equation} \label{ALPHAT} \alpha_t (A) : = U^t A U^{-t}, \;\;\; A \in
\Psi^m(M). \end{equation} and since Dirac's Principles of Quantum
Mechanics, it was known to correspond to the classical evolution
\begin{equation}\label{VT}  V^t (a) : = a \circ g^t \end{equation}
of observables $a \in C^{\infty}(S^*M)$. Egorov's theorem is the
rigorous version of this correspondence: it says that $\alpha_t$
defines an order-preserving automorphism of $\Psi^*(M)$, i.e.
$\alpha_t(A) \in \Psi^m(M)$ if $A \in \Psi^m(M)$, and that
\begin{equation}\label{EGOROV}  \sigma_{U_t A U_t^*} (x, \xi)  =  \sigma_A
(g^t(x, \xi)) : = V^t (\sigma_A), \;\;\; (x, \xi) \in T^*M
\backslash 0.
\end{equation}
This  formula is almost universally taken to be the definition of
quantization of a flow or map in the physics literature.

\subsection{Hadamard parametrix}

The link between spectral theory and geometry, and the source of
Egorov's theorem for the wave group, is the construction of a
parametrix (or WKB formula) for the wave kernel. For small times
$t$, the simplest is the Hadamard parametrix,
\begin{equation} \label{HD} U^t (x,y) \sim \int_0^{\infty} e^{ i
\theta (r^2(x,y) - t^2)} \sum_{k = 0}^{\infty} U_k (x,y)
\theta^{\frac{d-3}{2} - k} d\theta\;\;\;\;\;\;\;\;\;(t < {\rm
inj}(M,g)) \end{equation} where $r(x,y)$ is the distance between
points, $U_0(x,y) = \Theta^{-\half}(x,y)$ is the volume
1/2-density, $inj(M, g)$ is the injectivity radius, and the higher
Hadamard coefficients are obtained by solving transport equations
along geodesics. The parametrix is asymptotic to the wave kernel
in the sense of smoothness, i.e. the difference of the two sides
of (\ref{HD}) is smooth. The relation (\ref{HD}) may be iterated
using $(U^t)^m = U^{t m}$ to obtain a parametrix for long times.
For manifolds without conjugate points, the Hadamard parametrix is
globally well defined on the universal cover and can then be
summed over the deck transformation group to obtain a parametrix
on the quotient.

Egorov's theorem (\ref{EGOROV}) may be proved by explicitly
writing out the integral representation for $U^{-t} A U^t$ using
(\ref{HD}) and by applying the stationary phase method to simplify
the integral.

\subsection{Ehrenfest  time}

The aim in quantum chaos is to obtain information about the high
energy asymptotics as  $\lambda_j \to \infty$ of eigenvalues and
eigenfunctions by connecting information about $U^t$ and $g^t$.
The connection comes from comparing  (\ref{UTEFNS}) and
(\ref{HD}), or by using Egorov's theorem (\ref{EGOROV}). But to
use the hypothesis that $g^t$ is ergodic or chaotic, one needs to
exploit the connection as $\hbar \to 0$ and $t \to \infty$. The
difficulty in quantum chaos is that the approximation of $U^t$ by
$g^t$ is only a good one for $t$ less than the {\it Eherenfest
time}
\begin{equation} \label{EHRENTIME} T_E = \frac{\log |\hbar|}{\lambda_{\max}},
\end{equation}
where $\lambda_{\max}$ is defined in (\ref{LAMBDAMAX}).

Roughly speaking, the idea is that the evolution of a well
constructed ``coherent"  quantum state or particle is a moving
lump that ``tracks along" the trajectory of a classical particle
up to time $T_H$ and then slowly falls apart and stops acting like
a classical particle. Numerical studies of long time dynamics of
wave packets are given in works of E. J. Heller \cite{He,He2} and
rigorous treatments are in Bouzouina-Robert \cite{BouR},
Combescure-Robert \cite{CR} and Schurbert \cite{Schu3}.

The basic result expressed in semi-classical notation is that
there exists $\Gamma > 0$ such that
\begin{equation} \label{EGORWREM} ||U_{\hbar}^{-t} Op(a) U_{\hbar}^t - Op(a \circ
g^t)|| \leq C \hbar \; e^{t \Gamma}. \end{equation} Such an
estimate has long been known if $\Gamma$ is not specified in terms
of the geodesics. It is implicit in \cite{Be} and used explicitly
in \cite{Z4}, based on long time wave equation estimates of
Volovoy. The best constant should be $\Gamma = \lambda_{\max}$.

Thus, one only expects  good joint asymptotics as $\hbar \to 0, T
\to \infty$ for $t \leq  T_E$. As a result, one can only exploit
the approximation of $U^t$ by  $g^t$  for the relatively short
time $T_E$.

\subsection{Modes, quasi-modes and coherent states}

This survey is mainly concerned with eigenfunctions. But one of
the main tools for studying eigenfunctions is the construction of
quasi-modes or approximate eigenfunctions. Lagrangian and coherent
states are closely related to quasi-modes. We refer to
\cite{Dui,CV2,BB,Ra,Ra2} for background.

A quasi-mode $\{\psi_k\}$ of order zero is a sequence of
$L^2$-normalized functions satisfying \begin{equation} \label{QM0}
||(\Delta - \mu_k) \psi_k||_{L^2} = O(1),
\end{equation}  for a sequence of quasi-eigenvalues $\mu_k$. By
the spectral theorem, there must exist true eigenvalues in the
interval $[\mu_k - \delta, \mu_k + \delta]$ for some $\delta > 0$.
Moreover, if $E_{k, \delta}$ denotes the spectral projection for
the Laplacian corresponding to this interval, then
$$ ||E_{k, \delta} \psi_k - \psi_k||_{L^2} = O(k^{-1}). $$
A more general definition is that a quasi-mode  of order
 $s$ is a sequence satisfying
 \begin{equation} \label{QM0s} ||(\Delta - \mu_k)
\psi_k||_{L^2} = O(\mu_k^{-s}).
\end{equation} Then
$$ ||E_{k, \delta} \psi_k - \psi_k||_{L^2} = O(k^{-s -1}). $$
This definition allows for very weak versions of quasi-modes, e.g.
quasi-modes of order $-1$. They carry no information about
eigenvalues since there always exist eigenvalues in intervals of
width one. However, they do carry some information on
eigenfunctions.

In references such as \cite{Dui}, quasi-modes are constructed in
the form of oscillatory integrals or Lagrangian states,
$$\psi_{\lambda}(x) = \int_{\R^k} e^{i \lambda S(x, \xi)} a(x,
\xi) d \xi. $$ The phase generates the Lagrangian submanifold
$$\Lambda_S = \{(x, d_x S): d_{\xi} S(x, \xi) = 0\} \subset T^*M. $$
 If $\Lambda \subset S^*M$ is a closed Lagrangian submanifold invariant
 under the geodesic flow then $\psi_{\lambda}$ is a quasi-mode of order $-1$.
 In favorable circumstances, one can further determine the amplitude so that  $|| (-\Delta - \lambda^2)
\psi_{\lambda}|| = O(\lambda^{-s})$ and construct a high order
quasi-mode. Such a quasi-mode is the `quantization' of $\Lambda$
and the associated sequence concentrates on $\Lambda$.

A coherent state is a special kind of Lagrangian state, or more
precisely isotropic state, which quantizes a `point' in phase
space rather than a Lagrangian submanifold. The simplest examples
are the states $\phi^{x_0, \xi_0}_{\hbar}(x) = \hbar^{-n/2}
e^{\frac{i}{\hbar} \langle x , \xi_0 \rangle} e^{- \frac{|x -
x_0|^2}{\hbar}}$ on $\R^n$, which are localized to the smallest
possible volume in phase space, namely a unit cube of volume
$\hbar$ around $(x_0, \xi_0)$. They are Lagrangian states with
complex phases associated to positive Lagrangian submanifolds of
complexified phase space.

\subsection{\label{RRW} Riemannian random waves}

Eigenfunctions in the chaotic case are never similar to such
quasi-modes or Lagrangian states. Rather, they are conjectured to
resemble random waves. This heuristic was first proposed by M. V.
Berry \cite{Ber}, who had in mind random Euclidean plane waves.
The specific model discussed here was used by the author to show
that eigenfunctions in the ergodic case behave to some degree like
random orthonormal bases of spherical harmonics. We use the term
Riemannian random waves for this model.

We decompose the spectrum of a compact  Riemannian manifold $(M,
g)$  into intervals $I_N = [N, N + 1]$ and define the finite
dimensional Hilbert spaces $\hcal_{I_{N}}$ as the space of linear
combinations of eigenfunctions with frequencies in an interval
$I_N : = [N, N + 1]$. The precise decomposition of $\R$ into
intervals is not canonical and any choice of intervals of finite
width would work as long as the set of closed geodesics of $(M,
g)$ has measure zero. We will not discuss the other case since it
has no bearing on quantum ergodicity.  We  denote by $\{\phi_{N j}
\}_{j = 1}^{d_N}$ an orthonormal basis of $\hcal_{I_N}$ where $d_N
= \dim \hcal_{I_N}$.   We endow the real vector space
$\mathcal{H}_{I_N}$ with the Gaussian probability measure
 $\gamma_N$ defined by
\begin{equation}\label{gaussian}\gamma_N(s)=
\left(\frac{d_N}{\pi }\right)^{d_{N}/2}e^ {-d_{N}|c|^2}dc\,,\qquad
\psi=\sum_{j=1}^{d_{N}}c_j \phi_{N j}, \,\;\; d_{N} = \dim
\hcal_{I_N}.
\end{equation} Here,  $dc$
is $d_{N}$-dimensional real Lebesgue measure. The normalization is
chosen so that $\E_{\gamma_N}\; \langle \psi, \psi \rangle=1$,
where $\E_{\gamma_N}$ is the expected value with respect to
$\gamma_N$. Equivalently,  the $d_N$ real variables $\ c_j$
($j=1,\dots,d_{N}$) are independent identically distributed
(i.i.d.) random variables with mean 0 and variance
$\frac{1}{2d_{N}}$; i.e.,
$$\E_{\gamma_N} c_j = 0,\;\; \quad  \E_{\gamma_N} c_j c_k =
\frac{1}{ d_N}\de_{jk}\,.$$ We note  that the Gaussian ensemble is
equivalent to picking $\psi_N \in \hcal_{I_N}$ at random from the
unit sphere in $\hcal_{I_N}$ with respect to the $L^2$ inner
product.

Numerical results confirming some features of the random wave
model are given in \cite{HR}.

\subsection{Hyperbolic plane}

It is useful to have one concrete example in mind, and we will use
the hyperbolic plane $\HH$ or disc $\D$ as a running model.  In
the disc model $\D=\{z\in\C, |z|<1\}$, the hyperbolic metric has
the form $ds^2=\frac{4|dz|^2}{(1-|z|^2)^2}.$ Its isometry group is
$G = PSU(1,1)$; the stabilizer of $0$ is $K\simeq SO(2)$ and thus
$\D \simeq G/K$. Without comment we will also identify $\D$ with
the upper half plane and $G$ with $PSL(2, \R)$.  In hyperbolic
polar coordinates centered at the origin $0$, the (positive)
Laplacian is the operator
$$- \Lap=\frac{\partial^2}{\partial r^2} +\coth r\frac{\partial}{\partial r}+\frac{1}{\sinh^2 r}\frac{\partial^2}{\partial \theta^2} .$$
 The distance on $\D$ induced by the Riemannian
metric will be denoted $d_\D$. We denote the volume form by
$d(z)$.

The unit tangent bundle $S \D$ of the hyperbolic disc $\D$ may be
identifed with the   unit cosphere bundle $S^*\D$ by means of the
metric. We further identify  $S \D \equiv PSU(1,1)$, since
$PSU(1,1)$ acts freely and transitively on $S \D$. We identify a
unit tangent vector $(z, v)$ with a group element $g$ if $g \cdot
(0, (1,0)) = (z, v)$.

We  denote by $B=\{z\in\C, |z|=1\}$ the boundary at infinity of
$\D$. Then we can also identify  $S\D \equiv \D \times B$. Here,
we identify $(z, b)\in \D \times B$ with the unit tangent vector
$(z, v)$, where $v \in S_z \D$ is the vector tangent to the unique
geodesic through $z$ ending at $b$.

The geodesic flow $g^t$ on $S \D$ is defined by $g^t(z, v) =
(\gamma_v(t), \gamma_v'(t))$ where $\gamma_v(t)$ is the unit speed
geodesic with initial value $(z, v)$. The space of geodesics is
the quotient of $S \D$ by the action of $g^t$. Each geodesic has a
forward endpoint $b$ and a backward endpoint $b'$ in $B$, hence
the  space of geodesics of $\D$ may be identified with $B \times B
\setminus \Delta,$ where $\Delta$ denotes the diagonal in $B
\times B$: To $(b', b)\in B \times B \setminus \Delta$ there
corresponds a unique geodesic $\gamma_{b', b}$ whose forward
endpoint at infinity equals $b$ and whose backward endpoint equals
$b'$. We then have the identification $$S \D \equiv (B \times B
\setminus \Delta) \times \R. $$ The choice of time parameter is
defined -- for instance -- as follows: The point $(b', b, 0)$ is
by definition the closest point to $0$ on $\gamma_{b', b}$ and
$(b', b, t)$ denotes the point $t$ units from $(b, b', 0)$ in
signed distance towards $b$.

\subsubsection{\label{REP} Dynamics and group theory of  $G = SL(2,
\R)$}

The generators of $sl(2, \R)$ are denoted by
$$H = \left( \begin{array}{ll} 1 & 0 \\ & \\ 0 & - 1 \end{array}
\right), \;\;\; V = \left( \begin{array}{ll} 0 & 1 \\ & \\
1 & 0
\end{array} \right), \;\; W =  \left( \begin{array}{ll} 0 & -1 \\ & \\1 & 0 \end{array}
\right).$$
 We denote the associated one parameter subgroups by $A,
A_-, K$. We denote the raising/lowering operators for $K$-weights
by \begin{equation} \label{EPM} E^+ = H + i V, \;\;\; E^- = H - i
V.  \end{equation}  The Casimir operator is then given by $4 \;
\Omega = H^2 + V^2 - W^2$; on $K$-invariant functions, the Casimir
operator acts as the laplacian $\Lap$. We also put
$$X_+ = \left( \begin{array}{ll} 0 & 1 \\ & \\ 0 & 0 \end{array}
\right),\;\;\;X_- = \left( \begin{array}{ll} 0 & 0 \\ & \\ 1 & 0
\end{array} \right), $$
and denote the associated subgroups by $N, N_-$. In the
identification $S \D\equiv PSL(2, \R)$ the geodesic flow is given
by the right action $g^t(g)=ga_t$ of $A$, resp. the horocycle flow
$(h^u)_{u\in\R}$ is defined  by $h^u(g)=gn_u$,  where
\begin{equation*} a_t = \left( \begin{array}{ll} e^{t/2} & 0 \\ & \\
0 & e^{-t/2} \end{array} \right), \;\;\;\; n_u = \left( \begin{array}{ll} 1 & u \\ & \\
0 & 1\end{array} \right). \end{equation*}

The closed orbits of the geodesic flow $g^t$ on $\Gamma \backslash
G$ are denoted $\{\gamma\}$ and are in one-to-one correspondence
with the conjugacy classes of hyperbolic elements of $\Gamma$.  We
denote by $G_{\gamma}$, respectively $\Gamma_{\gamma}$, the
centralizer of $\gamma$ in $G$, respectively $\Gamma$. The group
$\Gamma_{\gamma}$ is generated by an element $\gamma_0$ which is
called  a primitive hyperbolic geodesic.  The length of $\gamma$
is denoted $L_{\gamma} > 0$ and means that
$\gamma$ is conjugate, in $G$, to \begin{equation} \label{agamma} a_{\gamma} = \left( \begin{array}{ll} e^{L_{\gamma}/2} & 0 \\ & \\
0 & e^{-L_{\gamma}/2} \end{array} \right). \end{equation} If
$\gamma = \gamma_0^k$ where $\gamma_0$ is primitive, then we call
$L_{\gamma_0}$ the primitive length of the closed geodesic
$\gamma$.

\subsubsection{\label{NEFA} Non-Euclidean Fourier analysis}

Following  \cite{Hel}, we denote by $ \langle z, b \rangle $ the
signed distance to $0$ of the horocycle through the points $z \in
\D,\, b \in B$. Equivalently,
$$e^{\langle z, b \rangle} = \frac{1 - |z|^2}{|z - b|^2} = P_\D(z, b),$$
 where  $P_\D(z, b)$ is the Poisson kernel of the unit disc.
We denote Lebesgue measure on $B$ by  $|db|$.  We then introduce
the hyperbolic plane waves $e^{(\frac12+ir)\langle z, b \rangle}$,
the analogues of the Euclidean plane waves $e^{i \langle x, \xi
\rangle}$.

The non-Euclidean Fourier transform $\fcal: C_c{\D} \to C(B \times
\R_+)$  is defined by
$$\fcal u(r, b) = \int_{\D} e^{(\frac12-ir)\langle z, b \rangle}
u(z) dVol(z). $$ The inverse Fourier transform is given by
$$\fcal^{-1} g(z)  = \int_B \int_{\R} e^{( \frac12+ir )\langle z, b \rangle} g (r, b)
r \tanh (2\pi r) dr |db|. $$ The integration measure is the
Plancherel measure. $\fcal$ extends to an isometry $\fcal: L^2(\D,
dV) \to L^2(B \times \R_+, r \tanh (2 \pi r) dr db)$.

\subsubsection{Wave kernel on $\D$ and wave kernel on $\X$}

The wave group $U^t = e^{it \sqrt{\Delta}}$ of the positive
Laplacian on $\D$ is thus given in terms of Fourier analysis by
(distribution) integral
$$U^t(z,w) = \int_B \int_{\R} e^{i (\frac{1}{4} + r^2) t} e^{( \frac12+ir )\langle z, b \rangle}
e^{(- \frac12+ir )\langle w, b \rangle} r \tanh (2\pi r) dr |db|.
$$ The integral over $B$ may be eliminated by the stationary phase
method to produce an integral over $\R$ alone; it is the Hadamard
parametrix (\ref{HD}) in this case.

Now let $\Gamma \subset G$ be a discrete subgroup. In applications
to quantum chaos we assume the quotient $\X$ is compact or of
finite area.

The wave kernel on the quotient $\X$ is obtained by automorphizing
the wave kernel on $\D$: \begin{equation} \label{UTGAMMA}
U_{\Gamma}^t(z,w) = \sum_{\gamma \in \Gamma} U_{\Gamma}^t(z,
\gamma w). \end{equation}  This representation is one of the
inputs into the Selberg trace formula. The elements of $\Gamma$
are grouped into conjugacy classes $[\gamma] \in [\Gamma]$. The
conjugacy classes correspond to closed geodesics of $\X$, i.e.
periodic orbits of $g^t$ on $S^* \X$.

Let \begin{equation} \label{THETAT} \Theta_{\Gamma}(T) = \#
\{[\gamma]: L_{\gamma} \leq T\}. \end{equation}  The prime
geodesic theorem asserts that
\begin{equation} \label{PGT} \Theta_{\Gamma}(T) \sim
\frac{e^{h_{top} T}}{T}, \end{equation} where $h_{top}$ is the
topological entropy of $g^t$. In fact, $h_{top} = 1$ for the
hyperbolic case. The exponential growth of the length spectrum
reflects the exponential growth of the geodesic flow.

The Ehrenfest  time (\ref{EHRENTIME}) is implicit in the
exponential growth rate of $\Theta_{\Gamma}(T)$.

\subsubsection{\label{REPRE} Representation theory of $G$ and spectral theory of
$\Lap$}

Let $\Gamma \subset G$ be a co-compact  discrete subgroup, and let
us consider the automorphic eigenvalue problem on $G/K$:
\begin{equation} \label{HYPEIG} \left\{ \begin{array}{l} \Lap \phi =  (\frac14 +r^2)\phi,\\ \\
\phi(\gamma z) = \phi(z) \mbox{ for all } \gamma\in\Gamma \mbox{
and for all } z. \end{array} \right. \end{equation} The solutions
are the eigenfunctions of the Laplacian on the compact surface
$\X=\Gamma\backslash \; G \;/K$. A standard notation for the
eigenvalues is $\lambda^2=s(1- s)$ where $s=\frac12+ir$.
\medskip

Eigenfunctions of the Laplacian are closely connected with the
representation theory of $G$ on $G/\Gamma$. We briefly describe
the representation theory since the problems in quantum chaos have
analogues in the discrete series as well as the unitary prinicipal
series.

In the compact case, we have the decomposition into irreducibles,
$$L^2(\Gamma \backslash G)  = \bigoplus_{j = 1}^S {\mathcal C}_{ ir_j} \oplus \bigoplus_{j = 0}^{\infty}
\pcal_{ir_j} \oplus \bigoplus_{m = 2, \; m \;even}^{\infty}
\mu_{\Gamma}(m) \dcal_{m}^+ \oplus \bigoplus_{m = 2, m \; even
}^{\infty} \mu_{\Gamma}(m) \dcal_{m}^-,
$$ where ${\mathcal C}_{ir_j}$ denotes the complementary series
representation, respectively $\pcal_{ir_j}$ denotes the unitary
principal series representation, in which the Casimir operator  $-
\Omega = - (H^2 + V^2 - W^2)$ equals $s_j(1-s_j ) =
\frac14+r_j^2$. In the complementary series case, $ir_j \in \R$
while in the principal series case $i r_j \in i \R^+$. The
irreducibles are indexed by their $K$-invariant vectors
$\{\phi_{ir_j}\}$, which is assumed to be the given orthonormal
basis of $\Lap$-eigenfunctions. Thus, the multiplicity of
$\pcal_{ir_j}$ is the same as the multiplicity of the
corresponding eigenvalue of $\Lap$.

 Further,
 $\dcal^{\pm}_m$ denotes the holomorphic (respectively
anti-holomorphic) discrete series representation with lowest
(respectively highest) weight $m$, and  $\mu_{\Gamma}(m)$ denotes
its multiplicity; it  depends only on the genus of $\X$. We denote
by $\psi_{m, j}$ ($j = 1, \dots, \mu_{\Gamma}(m))$ a choice of
orthonormal basis of the lowest weight vectors of $\mu_{\Gamma}(m)
\dcal_{m}^+ $ and write $\mu_{\Gamma}(m) \dcal_{m}^+ = \oplus_{j =
1}^{\mu_{\Gamma}(m)} \dcal^+_{m, j}$ accordingly.

There is a a direct integral decomposition for  co-finite
subgroups such as $\Gamma = PSL(2, \Z)$ or congruence subgroups.
The non-compactness gives rise to a  continuous spectral subspace
of Eisenstein series and a discrete spectral subspace of cuspidal
eigenfunctions (which is only known to be non-trivial in the case
of arithmetic $\Gamma$).

\subsubsection{Helgason Poisson formula}

The Fourier transform of an $L^2$ function on $\D$ is an $L^2$
function on $B \times \R_+$. There is an extension of the Fourier
transform and the inversion formula to tempered distributions. We
only consider the case of $\Gamma$-automorphic eigenfunctions
where $\Gamma$ is co-compact.  One then has $\fcal \phi_{ir_j} =
dT_{ir_j}(b) \otimes \delta_{r_j}(r)$, where $dT_{ir_j} $ is a
distribution on the ideal boundary $B$. The inversion formula is
Helgason's Poisson formula,
\begin{equation} \label{POISSON} \phi_{ir}(z) = \int_B e^{(\frac12
+ ir) \langle z, b \rangle } T_{ir, \phi_{ir}} (db),
\end{equation}  for all $z\in\D$.  The kernel $e^{(\frac12 + ir) \langle z, b
\rangle } = P_\D^{(\frac12 + ir)}(z,b)$ is called the generalized
Poisson kernel. The distribution \begin{equation} \label{FOURIER}
T_{ir, \phi_{ir}}(db) =  \sum_{n \in \Z} a_{n}(r) b^n |db|.
\end{equation} is called the boundary value of
$\phi_{ir}$ and is obtained from the Fourier expansion
\begin{equation} \label{GENSPH}  \phi_{ir}(z)  = \sum_{ n \in \Z}
a_{n}(r)  \Phi_{r, n}(z), \end{equation} of $\phi_{ir}$ in the
disc model in terms of the generalized spherical functions,
\begin{equation}  e^{(\frac12+i r) \langle z, b \rangle } = \sum_{ n \in \Z}
\Phi_{r, n}(z) b^n,\;\;\; b \in B.
\end{equation}
Equivalently, the $\Phi_{r,n}$  are the joint eigenfunctions of
$\Delta$ and of $K$.

When $\phi_{ir_j}$ is a $\Gamma$-invariant eigenfunction, the
boundary values $T_{ir_j}(db)$ have the following invariance
property:
\begin{equation}\label{CONFORMAL} \begin{array}{ll} \phi_{ir_j}(\gamma z) = \phi_{ir_j} (z) & \implies
e^{(\frac12+ir_j)\langle \gamma z, \gamma b \rangle} T_{ir_j}(d
\gamma b) =
e^{(\frac12+ir_j)\langle z, b \rangle} T_{ir_j} (d b)\\ &  \\
& \implies T_{ir_j}( d\gamma b) = e^{- (\frac12+ir_j) \langle
\gamma \cdot 0, \gamma \cdot b \rangle} T_{ir_j} (d b) \end{array}
\end{equation}
This follows by the identities
\begin{equation}
\label{ID1} \langle g \cdot z, g \cdot b \rangle = \langle z, b
\rangle + \langle g \cdot 0, g \cdot b \rangle,  \end{equation}
which implies
\begin{equation} \label{ID2} P_\D(g z, g b)\, |d (gb)| = P_\D(z, b)\,
|db|.\end{equation}

An interesting heuristic related to  Berry's random wave
hypothesis (see \S \ref{RRW})  is the conjecture  that the Fourier
coefficients $a_n(r)$ in (\ref{FOURIER}) should behave like
independent Gaussian random variables of mean zero and variance
one. This conjecture was stated explicitly and  tested numerically
by Aurich-Steiner in \cite{AS}. Related tests of the random wave
model for hyperbolic quotients are in Hejhal-Rackner \cite{HR}.

 Otal \cite{O} and Schmid \cite{Sch}  have shown  that
$T_{ir_j}(db)$ is the derivative of a H\"older $C^{\half}$
continuous function $F_{ir_j}$ on $B$. Since its zeroth Fourier
coefficient is non-zero, $T_{ir_j}(db)$ is not literally the
derivative of a periodic function, but it is the derivative of a
function $F_{ir_j}$ on $\R$ satisfying $F_{ir_j}(\theta + 2 \pi) =
F_{ir_j}(\theta) + C_j$ for all $\theta \in \R$. Recall that for
$0 \leq \delta \leq 1$ we say that a $2 \pi$-periodic function $F:
\R \to \C$ is $\delta$-H\"older if $|F(\theta) - F(\theta')| \leq
C |\theta - \theta'|^{\delta}.$ The smallest constant is denoted
$||F||_{\delta}$ and $\Lambda_{\delta}$ denotes the Banach space
of $\delta$-H\"older functions, up to additive constants.

\subsubsection{Boundary values and representation theory \cite{Z3}}

The distributions $dT_{ir_j}$ have a natural interpretation in
representation theory. We define   $e_{ir_j} \in \dcal'(\Gamma
\backslash PSL(2, \R))$ such that
\begin{equation} \label{EJ} e^{(\frac12+ir_j)\langle z, b \rangle} T_{ir_j} (d
b)= e_{ir_j}(z, b) P(z, b) db.
\end{equation}
 The distribution $e_{ir_j}$ is horocyclic-invariant and
$\Gamma$-invariant. It may be expanded in a $K$-Fourier series,
$$e_{ir_j}= \sum_{n \in \Z} \phi_{ir_j, n}, $$
and it is easily seen (cf. \cite{Z2}) that $\phi_{ir_j, 0} =
\phi_{ir_j}$ and that $\phi_{ir_j, n}$ is obtained by applying the
$n$th normalized raising or lowering operator (Maass operator)
$E^{\pm} = H \pm i V$ to $\phi_{ir_j}$. More precisely, one
applies $(E^{\pm})^n$  and multiplies by the normalizing factor
$\beta_{2ir_j, n} = \frac{1}{(2ir_j + 1 \pm 2n) \cdots (2i r_j + 1
\pm 2)}$.

\section{\label{WLSECT}Weyl law and local Weyl law}

We now return to the general case. A basic result in
semi-classical asymptotics  is Weyl's law on counting eigenvalues:
\begin{equation}\label{WL} N(\lambda ) = \#\{j:\lambda _j\leq \lambda \}= \frac{|B_d|}{(2\pi)^d} Vol(M, g)
\lambda^d + R(\lambda), \;\;\; \mbox{where}\;\; R(\lambda) =
O(\lambda ^{d-1}).
\end{equation} Here, $|B_d|$ is the Euclidean volume of the unit
ball and $Vol(M, g)$ is the volume of $M$ with respect to the
metric $g$. Weyl's law says that
\begin{equation} Tr E_{\lambda} \sim  \frac{Vol( |\xi|_g
\leq \lambda)}{(2\pi)^d}, \end{equation} where $Vol$ is the
symplectic volume measure relative to the natural symplectic form
$\sum_{j=1}^d dx_j \wedge d\xi_j$ on $T^*M$. Thus, the dimension
of the space where $H = \sqrt{\Delta} $ is $\leq \lambda$ is
asymptotically the volume where its symbol $|\xi|_g \leq \lambda$.

The growth of the  remainder term depends on the  long time
behavior of $g^t$.  It is sharp on the standard sphere, where all
geodesics are periodic.  By a classical theorem due to
Duistermaat-Guillemin \cite{DG} (in the boundaryless case) and to
Ivrii (in the boundary case), $$R(\lambda) = o(\lambda^{d-1}),
\;\; \mbox{when the set of periodic geodesics has Liouville
measure zero}. $$The remainder is then of small order than the
derivative of the principal term, and one  has asymptotics in
shorter intervals:
\begin{equation} \label{DGIshort} N([\lambda, \lambda + 1]) = \#\{j:\lambda _j  \in  [\lambda, \lambda + 1] \}=
n \frac{|B_d|}{(2\pi)^d} Vol(M, g) \lambda ^{d-1} +o(\lambda
^{d-1}).
\end{equation}
Then the mean spacing between the eigenvalues in this interval is
$\sim C_d Vol(M, g)^{-1} \lambda^{-(d-1)}$, where $C_d$ is a
constant depending on the dimension.

In the case of compact Riemannian manifolds of negative curvature,
a sharper estimate of the remainder is possible:
\begin{equation} \label{NEGCURVWEYL} R(\lambda) =
O(\frac{\lambda^{d-1}}{\log \lambda}). \end{equation} This
remainder is proved using (\ref{UTGAMMA}) and the exponential
growth rate in (\ref{PGT}). The estimate (\ref{NEGCURVWEYL}) was
proved by Selberg in the case of compact hyperbolic quotients and
was generalized to all compact Riemannian manifolds without
conjugate points by B\'erard \cite{Be}. The logarithm in the
remainder is a direct outcome of the fact that one can only use
the geodesic approximation to $U^t$ up to the Ehrenfest time $T_E$
(\ref{EHRENTIME}).

This estimate has not been improved in fifty years, and there are
no better results in the constant curvature case than in the
general negatively curved case.  The remainder estimate does not
rule out the implausible-seeming scenario that in the interval
$[\lambda, \lambda + 1]$ there are only $\log \lambda$ distinct
eigenvalues with multiplicities $\frac{\lambda^{d-1}}{\log
\lambda}.$ In fact, such implausibly large multiplicities do occur
for a sparse set of `Planck constants'  in the analogous case of
quantizations of hyperbolic toral automorphisms (see \cite{FNB}).
The issue of possibly high multiplicity or clustering of
eigenvalues is important in Hassell's scarring result as well as
the Anantharaman-Nonnenmacher results on entropies.

An important generalization is the {\it local Weyl law} concerning
the traces $Tr A E(\lambda)$ where $A \in \Psi^m(M).$ It asserts
that
\begin{equation} \label{LWL} \sum_{\lambda _{j} \leq
\lambda }\langle A\varphi_j, \varphi_j \rangle = \frac{1}{(2
\pi)^d}  \left(\int_{B^*M} \sigma_A dx d\xi \right) \lambda^d +
O(\lambda^{d-1}).
\end{equation}

There is also a  pointwise local Weyl law:

\begin{equation} \label{PLWL} \sum_{\lambda _{j} \leq
\lambda } |\varphi_j(x)|^2 = \frac{1}{(2 \pi)^d} |B^d| \lambda^d +
R(\lambda, x),
\end{equation}
where $R(\lambda, x) = O(\lambda^{d-1})$ uniformly in $x$. When
the periodic geodesics form a set of measure zero in $S^*M$, the
remainders are of order $o(\lambda^{d-1})$, and  one could average
over the shorter interval $[\lambda, \lambda + 1].$ In the
negatively curved case, $$R(\lambda, x) = O
(\frac{\lambda^{d-1}}{\log \lambda}). $$ Combining the Weyl and
local Weyl law, we find the surface average of $\sigma_A$ is a
limit of traces:
\begin{equation} \label{OMEGA} \begin{array}{lll} \omega(A)
&: =&\displaystyle{ \frac{1}{\mu_L (S^{*}M)}\int_{S^*M}\sa d\mu_L} \\
& & \\
 &=& \displaystyle{ \lim_{\lambda
\rightarrow\infty}\frac{1}{N(\lambda )}\sum_{\lambda _{j} \leq
\lambda }\langle A\varphi_j, \varphi_j \rangle}
\end{array} \end{equation}
Here, $d\mu_L$ is Liouville measure on $S^*M$ (\S \ref{GEO}).

\section{\label{INVSTATES} Invariant states defined by eigenfuntions and their quantum limits}

When speaking of `states' in quantum mechanics, one might refer to
a normalized  wave function $\psi$ or alternatively to the matrix
elements $\rho_{\psi}(A) = \langle A \psi, \psi \rangle$ of an
observable in the state. The latter use of `state' is a standard
notion  in C* algebras, and is central in quantum ergodicity and
mixing. The states evidently have  the properties: (i)
$\rho_{\psi}(A^*A) \geq 0; (ii) \rho_{\psi}(I) = 1; (iii)
\rho_{\psi}$ is continuous in the norm topology. The classical
analogue is a probability measure, viewed as a positive normalized
linear functional on $C(S^* M)$. In particular, eigenfunctions
define states on $\Psi^0$ (more correctly, its closure in the norm
topology) as in (\ref{RHOJ}), which we repeat:
\begin{equation} \label{RHOK} \rho_k(A) = \langle A \phi_k, \phi_k
\rangle. \end{equation} We could (and will) also consider
'transition amplitudes' $\rho_{j, k}(A) = \langle A \phi_j, \phi_k
\rangle.$

It is an immediate consequence of the fact that $U^t \phi_j = e^{i
t \lambda_j} \phi_j$ that the diagonal states $\rho_k$ are
invariant under the automorphism $\alpha_t$  (\ref{ALPHAT}):
\begin{equation}\label{INVAR}  \rho_k(U_t A U_t^*) = \rho_k(A). \end{equation}
In general, we denote by $\ecal$  the compact, convex set of
states in the vector space of continuous linear functionals on the
closure of $\Psi^0$ in its norm topology. We denote by
\begin{equation} \ecal_{\R} = \{\rho \in \ecal: \rho \circ \alpha_t = \rho \} \end{equation}
the compact convex set of invariant states.  For simplicity of
notation, we continue to denote the closure by $\Psi^0$, and refer
to \cite{Z6} for a detailed exposition.

From a mathematical point of view, such the  states $\rho_k(A)$
are important because they provide  the simplest means of studying
eigenfunctions: one tests eigenfunctions against observables  by
studying the values $\rho_k(A)$. Many standard inequalities in PDE
(e.g. Carleman estimates) have the form of testing eigenfunctions
against well chosen observables. Of course, the high eigenvalue
asymptotics  is not simple. One would like to know the behavior as
$\lambda_j \to \infty$ (or $\hbar \to 0$ of the diagonal matrix
elements $\langle A \phi_j, \phi_j \rangle$  and the transition
amplitudes $\langle A \phi_i, \phi_j \rangle$ between states for $
A \in \Psi^0(M)$. One of the principal problems of quantum chaos
is the following:

\begin{prob}\label{W*} Determine the set   ${\mathcal Q}$ of `quantum limits', i.e.
weak* limit points of the sequence of invariant eigenfunction
stats $\{\rho_k\}$ or equivalently of the  Wigner distributions
$\{W_k\}$.
\end{prob}

 As will be illustrated below in simple examples,  weak limits
reflect the concentration and oscillation properties of
eigenfunctions.

Off-diagonal matrix elements
\begin{equation} \rho_{jk} (A) = \langle A \phi_i, \phi_j \rangle \end{equation} are also important
as  transition amplitudes between states. They  no longer define
states since $\rho_{jk}(I) = 0$, are no longer positive, and are
no longer invariant. Indeed, $\rho_{jk}(U_t A U_t^*) = e^{i t
(\lambda_j - \lambda_k)} \rho_{jk}(A),  $ so they are eigenvectors
of the automorphism $\alpha_t$ of (\ref{ALPHAT}). A sequence of
such matrix elements cannot have a weak limit unless the spectral
gap $\lambda_j - \lambda_k$ tends to a limit $\tau \in \R$.
Problem \ref{W*}  has the following extension to off-diagonal
elements:
\medskip

 \begin{prob} Determine the set
${\mathcal Q}_{\tau} $ of `quantum limits', i.e.  weak* limit
points of the sequence $\{W_{kj}\}$ of distributions on the
classical phase space $S^*M$, defined by
$$\int_X a dW_{kj} := \langle Op(a) \phi_k, \phi_j \rangle$$
where $\lambda_j - \lambda_k = \tau + o(1)$ and where $a \in
C^{\infty}(S^*M)$, or equivalently of the functionals
$\rho_{jk}$.\end{prob}

\subsection{Simplest properties of quantum limits}

 The first is that  $\langle K \phi_k, \phi_k
\rangle \to 0$ for any compact operator $K$.  Indeed, any
orthonormal basis such as $\{\phi_k\}$ tends to zero weakly in
$L^2$.  Hence $\{K \phi_k\}$ tends to zero in $L^2$ for any
compact operator and in particular the diagonal matrix elements
tend to zero. It follows that for any $A \in \Psi^0(M)$, any limit
of a sequence of $\langle A \phi_k, \phi_k \rangle$ is equally a
limit of $\langle (A + K) \phi_k, \phi_k \rangle$.

Any two choices of $Op$, i.e. of  quantizations of homogeneous
symbols (of order zero) as pseudo-differential operators, are the
same to leading order. Hence their difference is compact. Since a
negative order pseudo-differential operator is compact,  $\qcal$
is independent of the definition of $Op$.

These properties do not use the fact that $\phi_j$ are
eigenfunctions. The next property of the $\rho_k$ are consequences
of the fact $\rho_k \in \ecal_{\R}$.
\begin{prop} $\qcal
\subset \mcal_I$, where $\mcal_I$ is the convex set of invariant
probability measures for the geodesic flow. They are also
time-reversal invariant.
\end{prop}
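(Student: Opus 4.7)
The plan is to verify in turn that each weak-$*$ limit $\mu$ of the sequence $\{\rho_k\}$ (i) descends to a positive normalized linear functional on $C(S^*M)$ and hence is a probability measure, (ii) is invariant under the geodesic flow $g^t$, and (iii) is invariant under the time-reversal involution $\iota : (x,\xi) \mapsto (x,-\xi)$. Throughout I use the properties already recorded: $\rho_k(K)\to 0$ for compact $K$, the independence of $\qcal$ from the choice of quantization $\Op$, and the invariance relation $\rho_k(U_t A U_t^*) = \rho_k(A)$ from (\ref{INVAR}).

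For (i), let $A\in\Psi^0$. Since two quantizations of the principal symbol $\sigma_A\in C^\infty(S^*M)$ differ by an operator of order $-1$, hence compact, the functional $\mu$ depends on $A$ only through $\sigma_A$ and therefore defines a linear functional on $C^\infty(S^*M)$, extended by continuity to $C(S^*M)$. Normalization $\mu(1)=1$ is immediate from $\rho_k(I)=1$. For positivity, given $a\in C^\infty(S^*M)$ with $a\geq 0$ and $\epsilon>0$, set $B=\Op(\sqrt{a+\epsilon})$, chosen self-adjoint; then $B^*B=\Op(a+\epsilon)+R$ with $R\in\Psi^{-1}$, so $\rho_k(B^*B)\geq 0$ passes to the limit to give $\mu(a)+\epsilon\geq 0$. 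Letting $\epsilon\downarrow 0$ and applying the Riesz representation theorem identifies $\mu$ with a Borel probability measure on $S^*M$. For (ii), fix $t\in\R$ in (\ref{INVAR}): Egorov's theorem (\ref{EGOROV}) asserts that $U_t A U_t^* \in \Psi^0$ with principal symbol $\sigma_A\circ g^t$, so passing to a subsequence that realizes $\mu$ turns $\rho_k(U_t A U_t^*)=\rho_k(A)$ into $\mu(\sigma_A\circ g^t)=\mu(\sigma_A)$ for every $A\in\Psi^0$ and every $t\in\R$, which is exactly $(g^t)_*\mu=\mu$.

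For (iii), select the orthonormal basis $\{\phi_k\}$ to consist of real-valued eigenfunctions, which is possible since $\Delta$ has real coefficients and each eigenspace can be diagonalized by a real orthogonal change of basis. Let $C$ denote complex conjugation $C\phi=\bar\phi$, so $C\phi_k=\phi_k$. A direct computation in an oscillatory-integral representation shows that $CAC$ is pseudodifferential with principal symbol $\overline{\sigma_A}\circ\iota$; combined with the anti-linear identity $\langle u,v\rangle=\overline{\langle Cu,Cv\rangle}$, this yields $\rho_k(A)=\overline{\rho_k(CAC)}$. Passing to the limit gives $\mu(\sigma_A)=\overline{\mu(\overline{\sigma_A}\circ\iota)}$; restricting to real symbols and using that $\mu$ is a real measure then produces $\mu(f)=\mu(f\circ\iota)$ for all $f\in C(S^*M)$. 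The only genuinely technical point is the positivity step in (i), which is essentially the sharp G{\aa}rding inequality packaged into the near-factorization $B^*B\approx\Op(a)$; the flow and time-reversal invariances are then formal consequences of Egorov's theorem and of the ability to choose a real eigenbasis, respectively.
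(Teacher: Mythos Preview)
Your proof is correct and follows essentially the same route as the paper's: use Egorov's theorem for flow-invariance, realness of the eigenbasis for time-reversal invariance, and reduce the functional to symbols modulo compacts before checking positivity and normalization. The only minor difference is in how boundedness on $C(S^*M)$ is obtained: the paper invokes the Calkin-algebra identity $\inf_K\|A+K\|=\|\sigma_A\|_{L^\infty}$ to bound $|\mu(\sigma_A)|$ directly, whereas you deduce continuity from positivity plus $\mu(1)=1$ after your $B^*B$ factorization; both are standard and equivalent here.
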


To see this, we first observe that any weak * limit of of
$\{\rho_k\}$ is a positive linear functional on symbols, which we
identify with homogeneous functions of order zero on $T^*M$ or
with smooth functions on $S^*M$. Indeed, any limit of  $\langle A
\phi_k, \phi_k \rangle$, is bounded by $\inf_K ||A + K||$ (the
infimum taken over compact operators), and for any $A \in \Psi^0$,
$||\sigma_A||_{L^{\infty}} = \inf_K ||A + K||$. Hence any weak
limit is bounded by a constant times $||\sigma_A||_{L^{\infty}} $
and is therefore continuous on $C(S^*M)$. It is a positive
functional since each $\rho_j$ is and hence any limit is a
probability measure. The invariance under $g^t$  follows from
$\rho_k \in \ecal_{\R}$ by Egorov's theorem: any limit of
$\rho_k(A)$ is a limit of $\rho_k(Op (\sigma_A \circ g^t))$ and
hence the limit measure is $g^t$ invariant. Furthermore, the limit
measures  are time-reversal invariant, i.e. invariant under $(x,
\xi) \to (x, - \xi)$ since the eigenfunctions are real-valued.

Problem \ref{W*}  is thus to identify which invariant measures in
$\mcal_I$ show up as weak limits of the functionals $\rho_k$ or
equivalently of the Wigner distributions $dW_k$.  The problem is
that $\mcal_I$ can be a very large set.  Examples of invariant
probability measures for the geodesic flow include:

\begin{enumerate}

\item Normalized Liouville measure $d\mu_L$. In fact, the
functional $\omega$ of (\ref{OMEGA}) is also a state on $\Psi^0$
for the reason explained above. A subsequence $\{\varphijk\}$ of
eigenfunctions is considered diffuse if $\rho_{j_k} \to \omega$.

\item A periodic orbit measure $\mu_{\gamma}$ defined by
$\mu_{\gamma}(A) = \frac{1}{L_{\gamma}} \int_{\gamma} \sigma_A ds$
where $L_{\gamma}$ is the length of $\gamma$. A sequence of
eigenfunctions for which $\rho_{k_j} \to \mu_{\gamma}$ obviously
concentrates (or strongly `scars') on the closed geodesic.

\item A finite convex combination  $\sum_{j =1}^M c_j
d\mu_{\gamma_j}$ of periodic orbit measures.

\item A mixed measure such as $\half d\mu_L + \half \sum_{j =1}^M
c_j d\mu_{\gamma_j}$

\item A delta-function along an invariant Lagrangian manifold
$\Lambda \subset S^*M$. The associated eigenfunctions are viewed
as {\it localizing} along $\Lambda$.

\item There are many additional kinds of singular measures in the
Anosov case.

\end{enumerate}

Thus, the constraint in the Proposition is far from enough to
determine $\qcal$.
\medskip

\noindent{\it To pin down $\qcal$, it is necessary to find more
constraints on the states $\rho_k$ and their weak* limits. We must
use the quantum mechanics of $\rho_k$ to pin down the possible
classical limits.}
\medskip

What possible additional constraints are there on the $\rho_k$? To
date, only two are known.

\begin{itemize}

\item Further symmetries of the $\phi_j$. In general there are
none. But in special cases they exist. The most significant case
is that of Hecke eigenfunctions, which carry an infinite number of
further symmetries. See \S \ref{HECKE} and \cite{LIND,Sound1}.

\item Entropies of limit measures. Lower bounds were obtained by
N. Anantharaman and S. Nonnemacher \cite{A,AN} in the case of $(M,
g)$ with Anosov geodesic flow (see also \cite{ANK,Riv}.)

\end{itemize}

These constraints are of a very different nature. The Hecke
symmetries are special to arithmetic hyperbolic quotients, where
one supplement the geodesic flow--wave group connection with the
Hecke symmetries and connections to number theory.   The entropy
bounds are very general and only use the geodesic flow--wave group
connection.

\subsection{Ergodic sequences of eigenfunctions}

A subsequence $\{\phi_{j_k}\}$  of eigenfunctions is called {\it
ergodic} if the only weak * limit of the sequence of $\rho_{j_k}$
is $d\mu_L$ or equivalently the Liouville  state $\omega$.
 If $d W _{k_j} \to \omega$ then in particular, we have
$$\frac{1}{Vol(M)} \int_E |\phi_{k_j}(x)|^2 dVol \to
\frac{Vol(E)}{Vol(M)} $$ for any measurable set $E$ whose boundary
has measure zero. In the interpretation of $|\phi_{k_j}(x)|^2
dVol$ as the probability density of finding a particle of energy
$\lambda_k^2$ at $x$, this says that the sequence of probabilities
tends to uniform measure.

However,  $W_{k_j} \to \omega$ is much stronger since it says that
the eigenfunctions become diffuse on the energy surface $S^*M$ and
not just on the configuration space $M$. One can quantize
characteristic functions ${\bf 1}_E$ of open sets in $S^*M$ whose
boundaries have measure zero. Then
$$\langle Op({\bf 1}_E) \phi_j, \phi_j \rangle = \mbox{the
amplitude that the particle in energy state} \; \lambda_j^2\;\;
\mbox{lies in E}. $$ For an ergodic sequence of eigenfunctions,
$$\langle Op({\bf 1}_E) \phi_{j_k}, \phi_{j_k} \rangle \to
\frac{\mu_L(E)}{\mu_L(S^*M)}, $$ so that the particle becomes
diffuse, i.e. uniformly distributed on $S^* M$. This is the
quantum analogue of the property of uniform distribution of
typical geodesics of ergodic geodesic flows (Birkhoff's ergodic
theorem).

\subsection{QUE}

The Laplacian $\Delta$ or $(M, g)$ is said to be QUE (quantum
uniquely ergodic) if $\qcal = \{\mu_L\}$, i.e. the only quantum
limit measure for any orthonormal basis of eigenfunctions is
Liouville measure.

\begin{conj} (Rudnick-Sarnak, \cite{RS})
Let $(M, g)$  be a  negatively curved manifold. Then $\Delta$ is
QUE. \end{conj}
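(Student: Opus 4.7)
The conjecture is open, so what follows is an attack strategy rather than a sketch of a complete proof. The natural starting point is the Shnirelman--Zelditch--Colin de Verdi\`ere theorem (which will appear later in the survey but is implicit in the discussion around (\ref{OMEGA})): it already produces a density-one subsequence with $\rho_{j_k} \to \omega$. Thus the plan reduces to excluding any exceptional, necessarily density-zero, subsequence whose quantum limit $\mu \in \qcal$ differs from $\mu_L$. Any such $\mu$ already lies in $\mcal_I$ and is time-reversal invariant, so the task is to upgrade these soft constraints to a rigidity statement that forces $\mu = \mu_L$.

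The main approach I would pursue is the entropy method of Anantharaman--Nonnenmacher. One first shows $\mu \in \qcal$ has Kolmogorov--Sinai entropy bounded below by a positive fraction of the maximal entropy $\int_{S^*M} \log \|J^u\| d\mu$ (whose infimum over $\mcal_I$ is $\Lambda$ from (\ref{LAMBDAnosov})). The goal is to push the lower bound all the way up to equality, since in constant negative curvature Liouville measure is the unique measure of maximal entropy and one would conclude $\mu = \mu_L$. Concretely I would (i) refine the microlocal partition of $S^*M$ used in the entropic uncertainty principle to exploit the full Anosov splitting (unstable Jacobi fields $Y^u_1 \wedge \cdots \wedge Y^u_{d-1}$), (ii) propagate individual partition cells by $U^t$ using the Hadamard parametrix (\ref{HD}) summed over the deck group, and (iii) replace the crude norm estimate in (\ref{EGORWREM}) by a stationary-phase analysis that exploits cancellations between geodesic arcs in distinct homotopy classes, so that the admissible propagation time exceeds the Ehrenfest barrier $T_E$ from (\ref{EHRENTIME}).

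In parallel I would attempt a multiplicity-rigidity sub-plan. The logarithmic remainder (\ref{NEGCURVWEYL}) is consistent with pathological eigenvalue clusters of multiplicity $\lambda^{d-1}/\log\lambda$, and any such cluster allows one to form linear combinations that localize on essentially arbitrary invariant measures. I would try to rule this out by comparing two asymptotics of $\mathrm{Tr}\, A U^t$ for well-chosen $A \in \Psi^0$: the spectral side, which is controlled by the hypothetical concentrating subsequence, against the geometric side, given by summing the Hadamard parametrix over conjugacy classes $[\gamma]$ with the prime geodesic asymptotic (\ref{PGT}). A contradiction between the two sides for $t$ slightly beyond $T_E$ would bound the multiplicities and, combined with the entropy bound above, close the gap.

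The hard part, and the reason the conjecture has remained open, is precisely the Ehrenfest barrier: the approximation $U^{-t} A U^t \approx \mathrm{Op}(\sigma_A \circ g^t)$ deteriorates like $\hbar e^{t \lambda_{\max}}$, so no stationary-phase argument that merely estimates absolute values of amplitudes can reach times $t \gg T_E$, whereas the dynamical information needed to force maximal entropy lives at exponentially longer scales. Breaking this barrier appears to require either a new cancellation mechanism in the wave trace (analogous to what the Hecke symmetries supply in the arithmetic setting treated in \cite{LIND,Sound1}) or an unconditional input from dynamical zeta functions that is not currently available. I would therefore expect the strategy above, pursued with only the microlocal tools displayed so far in the excerpt, to recover and possibly sharpen the entropy lower bounds of \cite{A,AN,Riv}, but not to settle the full conjecture.
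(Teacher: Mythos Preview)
This statement is a \emph{conjecture}, and the paper contains no proof of it --- it is presented as open, with the surrounding discussion surveying partial results (Lindenstrauss' arithmetic QUE, the Anantharaman--Nonnenmacher entropy bounds, Hassell's non-QUE result for stadia). You correctly frame your proposal as a research program rather than a proof, and your outline is broadly aligned with the approaches the paper itself highlights as the state of the art.

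One point worth sharpening: you write that pushing the entropy lower bound ``all the way up to equality'' would force $\mu = \mu_L$ because ``in constant negative curvature Liouville measure is the unique measure of maximal entropy.'' This reasoning is specific to constant curvature, where $\log J^u$ is constant and the Bowen--Margulis measure of maximal entropy coincides with Liouville. In variable negative curvature these measures differ; the correct rigidity input is the Ledrappier--Young characterization, namely that equality $h_{KS}(\mu) = \int \log J^u\,d\mu$ in the Ruelle inequality forces $\mu$ to be SRB, hence Liouville. Your target should be Pesin/Ruelle equality, not maximal entropy per se.

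More fundamentally, the paper notes (after Theorem~\ref{thethm} and the Conjecture of \cite{AN}) that the Faure--Nonnenmacher--De Bi\`evre cat-map examples already saturate the conjectured $\tfrac{1}{2}$ entropy bound with non-Liouville quantum limits. Since the methods of \cite{A,AN} are semiclassical and transfer verbatim to quantum cat maps, any argument built solely from those tools cannot exceed the $\tfrac{1}{2}$ threshold. Your step (iii) --- extracting cancellations in the wave trace to break the Ehrenfest barrier --- must therefore exploit something genuinely Riemannian that cat maps lack, and no such mechanism is presently known outside the arithmetic setting. Your own closing assessment, that the plan would at best sharpen the entropy bounds without settling the full conjecture, is the correct conclusion.
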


In \cite{RS},  the case of arithmetic manifolds is investigated
and the role of the Hecke operators is clarified and exploited. In
particular the arithmetic   QUE Conjecture refers to the limits of
Hecke eigen-states, i.e.   eigenfunctions of the arithmetic
symmetries called Hecke operators. As reviewed in \S \ref{HECKE},
in  this case the conjecture  in all its  forms is more or less
completely solved. E. Lindenstrauss \cite{LIND}  (together with
the recent final step in the noncompact case by Soundararajan
\cite{Sound1}) proves this arithmetic QUE for arithmetic surfaces.
If the multiplicity of the eigenvalues are uniformly bounded then
one can deduce full QUE from arithmetic  QUE, i.e. QUE  for any
orthonormal basis  of eigenfunctions  (see \S \ref{HECKE}). Such
uniform bounds on multiplicities are however far out of the range
of current technology .
   The analogue of arithmetic QUE for Hecek holomorphic forms
    on noncompact arithmetic surfaces has recently been settled
by Holowinsky and Soundararajan \cite{Hol,HS}. In this case,  the
Hecke condition cannot be dropped due to the high multiplicity of
such forms.  Their methods are entirely arithmetical and we wont
discuss them further here.

Although we are not discussing quantum cat maps in detail, it
should be emphasized that quantizations of hyperbolic (Anosov)
symplectic maps of the torus) are not QUE. For a sparse sequence
of Planck constants $\hbar_k$, there exist eigenfunctions of the
quantum cat map which partly scar on a hyperbolic fixed point (see
Faure-Nonnenmacher-de Bi\`evre \cite{FNB}). The multiplicities of
the corresponding eigenvalues are of order $\hbar_k^{-1} / |\log
\hbar|$. It is unknown if anything analogous can occur in the
Riemannian setting, but as yet there is nothing to rule it out.

As pointed out in \cite{Z10},  QUE would follow if one could prove
that quantum limits were invariant under a uniquely ergodic flow
such as the horocycle flow of a compact hyperbolic quotient. A
problem in trying to use this approach is that the horocycle flow
is not Hamiltonian with respect to the standard symplectic form on
$T^* \X$, i.e. it cannot be quantized.  It is however Hamiltonian
with respect to a modified symplectic structure. The modified
symplectic structure corresponds to letting the weight of
automorphic forms vary with the $\Delta$ eigenvalue. As a result,
one does get large sequences of automorphic forms which are
Liouville distributed. But QUE in the standard sense refers to
sequences with fixed weight. It turns out that the weight has to
grow so quickly  for the QUE sequence  that one cannot seem to
relate the QUE result to quantum limits of forms with fixed weight
\cite{Z10}.

\subsection{Simplest example: $S^1$}

The only computable example of ergodic eigenfunctions is the
sequence of normalized eigenfunctions $2 \sin k \pi x, 2 \cos k
\psi x$ on $S^1$. Note that they are real valued; the exponentials
$e^{i k x}$ are not quantum ergodic.

The energy surface of $T^*S^1 = S^1 \times \R_{\xi}$ is the pair
of circles $|\xi| = \pm 1. $ The geodesic flow has two invariant
sets of positive measure (the two components), but the flow is
time-reversal invariant under $(x, \xi) \to (x - \xi)$ and the
quotient flow is ergodic. The real eigenfunctions are invariant
under complex conjugation and therefore the quotient quantum
system is ergodic.

Quantum ergodicity in this case amounts to
$$\frac{1}{\pi} \int_0^{2 \pi} V(x) (\sin k x)^2 dx \to \frac{1}{2
\pi} \int_0^{2 \pi} V dx. $$ This is obvious by writing $\sin k x$
in terms of exponentials and using the Riemann-Lebesgue Lemma.

This simple example illustrates an important aspect of quantum
limits: They are weak limits which owe to the fast oscillation of
the eigenfunction squares. In the limit the oscillating functions
tend to their mean values in the weak sense.

In particular, it illustrates why we only consider squares and not
other powers of eigenfunctions: weak limits are not preserved
under non-linear functionals such as powers. The study of $L^p$
norms of chaotic eigenfunctions is very difficult (there are
 Iwaniec-Sarnak for Hecke eigenfunctions).

\subsection{Example of quantum limits: the flat torus}

The only examples where one can compute quantum limits directly
are the completely integrable ones such as the standard sphere,
torus or symmetric spaces. These examples of course lie at the
opposite extreme from chaotic or ergodic dynamics. We  use the
simplest one, the flat torus $\R^n/\Z^n$, to illustrate the
definition of weak* limits.

  An orthonormal basis of eigenfunctions is
furnished by the standard exponentials $e^{2 \pi i \langle k, x
\rangle}$ with $k \in \Z^n$. Obviously, $|e^{2 \pi i \langle k, x
\rangle}|^2 = 1$, so the eigenfunctions are  diffuse in
configuration space. But  they are far from diffuse in phase
space. For any pseudodifferential operator, $A e^{2 \pi i \langle
k, x \rangle} = a(x, k) e^{2 \pi i \langle k, x \rangle}$ where
$a(x, k)$ is the complete symbol. Thus, $$\langle A e^{2 \pi i
\langle k, x \rangle}, e^{2 \pi i \langle k, x \rangle} \rangle =
\int_{\R^n/\Z^n} a(x, k) dx \sim \int_{\R^n/\Z^n} \sigma_A(x,
\frac{k}{|k|}) dx .
$$
Thus, the Wigner distribution is $\delta_{\xi - k}$.  A
subsequence $e^{2 \pi i \langle k_j, x \rangle}$ of eigenfunctions
has a weak limit if and only if $\frac{k_j}{|k_j|}$ tends to a
limit vector $\xi_0$ in the unit sphere in $\R^n$. In this case,
the associated weak* limit is $\int_{\R^n/\Z^n} \sigma_A(x, \xi_0)
dx $, i.e. the delta-function on the invariant torus $T_{\xi_0}
\subset S^*M$ defined by the constant momentum condition $\xi =
\xi_0.$ The eigenfunctions are said to localize under this
invariant torus for $g^t$.

The invariant torus is a Lagrangian submanifold of $T^*
\R^n/\Z^n$, i.e. a submanifold of dimension $n$ on which the
standard symplectic form $dx \wedge d \xi$ restricts to zero. The
exponentials are special cases of WKB or Lagrangian states
$a_{\hbar} e^{\frac{i}{\hbar} S}$, where $a_{\hbar}$ is a
semi-classical symbol $a_{\hbar} \sim a_j \hbar^j$. The associated
Lagrangian submanifold is the graph $(x, dS(x))$ of $S$. Thus,
$\langle k, x \rangle$ generates the Lagrangian submanifold $\xi =
\frac{k}{|k|}.$

In general, one says that a sequence $\{\phi_{j_k}\}$  of
eigenfunctions concentrates microlocally on a Lagrangian
submanifold $\Lambda \subset S^* M$ if
\begin{equation} \langle A \phi_{j_k}, \phi_{j_k} \rangle \to
\int_{\Lambda} \sigma_A d\nu, \end{equation} for some probability
 measure $\nu$ on $\Lambda$. Necessarily, $\Lambda$ is invariant
under $g^t$.

\subsection{Hyperbolic case}

Using the Helgason Poisson integral formula, the  Wigner
distributions can be expressed in terms of the $dT_{ir_h}$ and
$e_{ir_j}$ as follows. As in \cite{Z3}, we define the hyperbolic
calculus of pseudo-differential operators $Op(a)$ on $\D$ by
$$Op (a) e^{( \frac12+ir)\langle z, b \rangle} = a(z, b, r) e^{(\frac12 +ir)\langle z, b
\rangle}. $$ We assume that the complete symbol $a$ is a
polyhomogeneous function of $r$   in the classical sense that
$$a(z, b, r) \sim \sum_{j = 0}^{\infty} a_j(z, b) r^{-j + m}$$
for some $m$ (called its order). By asymptotics is meant that
$$a(z, b, r) - \sum_{j = 0}^{R} a_j(z, b) r^{-j + m} \in S^{m - R - 1}$$
where $\sigma \in S^k$ if $\sup (1 + r)^{j - k}| D^{\alpha}_z
D^{\beta}_b D_r^j \sigma(z,b, r)| < +\infty$ for all compact set
and for all $\alpha,\beta, j$.

 The non-Euclidean Fourier
inversion formula then extends the definition of $Op(a)$ to
$C_c^{\infty}(\D)$:
$$Op(a)u(z)=\int_B \int_{\R} a(z, b, r)e^{(\frac12+ir)\langle z, b \rangle} \fcal u(r, b)
r \tanh(2 \pi r) dr |db|.$$

A key property of $Op$ is that $Op(a)$ commutes with the action of
an element $\gamma\in G$ ($T_\gamma u(z)=u(\gamma z)$)  if and
only if $a(\gamma z, \gamma b, r) = a(z, b, r)$.
$\Gamma$-equivariant pseudodifferential operators then define
operators on the quotient $\X$.

By  the Helgason-Poisson formula one has  a relative explicit
formula for the Wigner distributions
 $W_{ir_j}\in \dcal'(S^* \X)) $  defined by
\begin{equation} \label{WIGDEF} \langle a, W_{ir_j} \rangle  = \int_{S^*\X} a(g)W_{ir_j}(dg) :=\langle Op(a)\phi_{ir_j},
\phi_{ir_j}\rangle_{L^2(\X)},\;\;\; a \in C^{\infty}(S^* \X).
\end{equation}
Equivalently we have \begin{equation} \label{WE} W_{ir_j} =
\phi_{ir_j} e_{ir_j}. \end{equation} Thus, the Wigner
distributions are far more diffuse in phase space than in the case
of the flat torus. But this does not rule out that their weak*
limits could have a singular concentration.

\subsubsection{Patterson-Sullivan distributions}

Egorov's theorem implies that Wigner distributions tend to
invariant measures for the geodesic flow. The question arises
whether there exist $g^t$-invariant distributions constructed from
eigenfunctions which are asymptotic to the Wigner distributions.
In \cite{AZ} such distributions were constructed on hyperbolic
surfaces and termed ``Patterson-Sullivan distributions" by analogy
with their construction of boundary measures associated to ground
states on infinite volume hyperbolic manifolds.

\begin{defin}
\label{PS1}The Patterson-Sullivan distribution associated to a
real eigenfunction  $\phi_{ir_j}$ is the distribution on $B \times
B \setminus \Delta$ defined by
$$ps_{ir_j}(db', db):= \frac{T_{ir_j}(db) T_{ir_j}(db')}{|b - b'|^{1 +  2i r_j}}$$
\end{defin}

If $\phi_{ir_j}$ is $\Gamma$-automorphic, then $ps_{ir_j}(db',
db)$ is $\Gamma$-invariant and time reversal invariant.

 To obtain a $g^t$-invariant distribution on $S^*\X$, we  tensor $ps_{ir_j}$ with $dt$. We then
 normalize by dividing by the integral against $1$.  The result is an  invariant
distribution $\hat{PS}_{ir_j}$ for $g^t$ constructed as a
quadratic expression in the eigenfunctions. In \cite{AZ} Theorem
1.1,  it is proved (theorem that
$$\int_{S^* \X} a \hat{PS}_{ir_j} = \int_{S^* \X} a W_{ir_j} +
O(r_j^{-1}). $$ Hence the quantum limits problem is equally one of
determining the weak* limits of the Patterson-Sullivan
distributions. It is shown in \cite{AZ} that they are residues of
dynamical L-functions and hence have a purely classical
definition.

In fact, there is an explicit intertwining operator $L_{r_j}$
mapping $PS_{ir_j} \to W_{ir_j}$ and we have
\begin{equation}\label{WIGPS}  \langle a, W_{ir_j} \rangle = \langle a,
\hat{PS}_{ir_j} \rangle + r_j^{-1} \langle L_2(a), \hat{PS}_{ir_j}
\rangle + O(r_j^{-2}). \end{equation}

\section{\label{QuE} Quantum ergodicity and mixing of eigenfunctions}

In this section, we review a basic result on quantum ergodicity.
We assume that the geodesic flow of $(M, g)$ is ergodic.
Ergodicity of $g^t$  means that Liouville measure $d\mu_L$ is an
ergodic measure for $g^t$ on $S^*M$, i.e. an extreme point of
$\mcal_I$. That is, there any $g^t$-invariant set has Liouville
measure zero or one. Ergodicity is a spectral property of the
operator $V^t$ of (\ref{VT}) on $L^2(S^*M, d\mu_L)$, namely that
$V^t$ has $1$ as an eigenvalue of multiplicity one. That is, the
only invariant $L^2$ functions (with respect to Liouville measure)
are the constant functions.

 In
this case, there is a general result which originated in the work
of A. I. Schnirelman and was developed into the following theorem
by S. Zelditch, Y. Colin de Verdi\`ere on manifolds without
boundary  and by P. G\'erard-E. Leichtnam and S. Zelditch-M.
Zworski on manifolds with boundary.

\begin{theo} \label{QE}   Let $(M,g)$ be a compact
Riemannian manifold (possibly with boundary), and let
$\{\lambda_j, \phi_j\}$ be the spectral data of its Laplacian
$\Delta.$ Then the geodesic flow
 $G^t$ is ergodic  on $(S^*M,d\mu_L)$ if and only if, for every
$A \in \Psi^o(M)$,  we have:
\medskip

\begin{itemize}

 \item (i)\; $\lim_{\lambda \rightarrow \infty}\frac{1}{N(\lambda)}
 \sum_{\lambda_j \leq \lambda}
|(A\phi_j,\phi_j)-\omega(A)|^2=0.$
\medskip

 \item (ii)\;$(\forall \epsilon)(\exists \delta)
\limsup_{\lambda \rightarrow \infty} \frac{1} {N(\lambda)}
\sum_{{j \not= k: \lambda_j, \lambda_k \leq \lambda}\atop {
|\lambda_j - \lambda_k| < \delta}} |( A \phi_j, \phi_k )|^2 <
\epsilon $
\end{itemize}

\end{theo}

The diagonal result may be interpreted as a variance result for
the local Weyl law. Since all the terms are positive, the
asymptotic  is equivalent to the existence of a s subsequence
$\{\phi_{j_k}\}$ of eigenfunctions whose indices $j_k$ have
counting density one for which $\langle A \phi_{j_k},
\phi_{j_k}\rangle \to \omega(A)$ for any $A \in \Psi^0(M)$. As
above, such a sequence of eigenfunctions is called ergodic. One
can sharpen the results  by averaging over eigenvalues in the
shorter interval $[\lambda, \lambda + 1]$  rather than in $[0,
\lambda]$.

The off-diagonal statement was proved in \cite{Z9} and the fact
that its proof can be reversed to prove the converse direction was
observed by Sunada in \cite{Su}. A generalization to finite area
hyperbolic surfaces is in \cite{Z8}.

  The first
statement (i) is essentially a convexity result. It remains true
if one replaces the square by any convex function $F$ on the
spectrum of $A$,
\begin{equation} \label{CONVEX} \frac{1}{N(E)} \sum_{\lambda_j
\leq E} F (\langle A \phi_k, \phi_k\rangle  - \omega(A)) \to 0.
\end{equation}

\subsection{Quantum ergodicity in terms of operator time and space
averages}

The diagonal variance asymptotics may be interpreted as a relation
between operator time and space averages.

\noindent{\bf Definition}~~~{\it Let $A \in \Psi^0$ be an
observable and define its time average to be:
 $$\langle A \rangle := \lim_{T \rightarrow \infty} \langle A \rangle_T, $$
 where $$\langle A \rangle_T : =  \frac{1}{2T} \int_{-T}^T
U^{t} A U^{-t} dt$$ and its space average to be scalar operator
$$\omega (A) \cdot I$$ }
Then  Theorem \ref{QE} (1) is (almost) equivalent to,
\begin{equation} \langle A \rangle = \omega(A) I +
K,\;\;\;\;\;\;\mbox{where}\;\;\;\;\; \lim_{\lambda \rightarrow
\infty} \omega_{\lambda}(K^*K) \rightarrow 0, \end{equation} where
$\omega_{\lambda}(A) = \frac{1}{N(\lambda)}  Tr E(\lambda) A. $
Thus, the time average equals the space average plus a term $K$
which is semi-classically small in the sense that its
Hilbert-Schmidt norm square $||E_{\lambda} K||_{HS}^2$ in the span
of the eigenfunctions of eigenvalue $\leq \lambda$ is
$o(N(\lambda)).$

This is not exactly equivalent to Theorem \ref{QE} (1) since it is
independent of the choice of orthonormal basis, while the previous
result depends on the choice of basis. However, when all
eigenvalues have multiplicity one, then the two are equivalent. To
see the equivalence, note that $\langle A \rangle$ commutes with
$\sqrt{\Delta}$ and hence is diagonal in the basis $\{\phi_j\}$ of
joint eigenfunctions of $\langle A \rangle$ and of $U_t$. Hence
$K$ is the diagonal matrix with entries $\langle A \phi_k,
\phi_k\rangle - \omega(A)$. The condition is therefore equivalent
to
$$\lim_{E \rightarrow \infty} \frac{1}{N(E)} \sum_{\lambda_j \leq E} |\langle A
\phi_k, \phi_k\rangle  - \omega(A)|^2 = 0.$$

\subsection{Heuristic proof of Theorem \ref{QE} (i) }

There is a simple picture of eigenfunction states which makes
Theorem \ref{QE} seem obvious. Justifying the picture   is more
difficult  than the formal proof below but the reader may find it
illuminating and convincing.

First, one should re-formulate the  ergodicity of $g^t$ as a
property of the Liouville measure $d\mu_L$: ergodicity is
 equivalent to the statement $d\mu_L$ is an extreme point of the compact convex set $\mcal_I$.
Moreover, it  implies that the Liouville state $\omega$ on
$\Psi^0(M)$ is an extreme point of the compact convex set
${\mathcal E}_{\R}$ of invariant states for $\alpha_t$ of
(\ref{ALPHAT}); see \cite{Ru} for background. But the local Weyl
law says that $\omega$ is also the limit of the convex combination
$\frac{1}{N(E)} \sum_{\lambda_j \leq E} \rho_j.$  An extreme point
cannot be written as a convex combination of other states unless
all the states in the combination are equal to it. In our case,
$\omega$ is only a limit of convex combinations so it need not
(and does not) equal each term. However, almost all terms in the
sequence must tend to $\omega$, and that is equivalent to (1).

One could  make this argument rigorous by considering whether
Liouville measure is an {\it exposed point} of $\ecal_I$ and
$\mcal_I$. Namely, is there a linear functional $\Lambda$ which is
equal to zero at $\omega$ and is $< 0$ everywhere else on
$\ecal_I$? If so, the fact that $ \frac{1}{N(E)} \sum_{\lambda_j
\leq E} \Lambda(\rho_j) \to 0$ implies that $\Lambda(\rho_j) \to
0$ for a subsequence of density one. For one gets an obvious
contradiction if $\Lambda(\rho_{j_k}) \leq - \epsilon < 0$ for
some $\epsilon > 0$ and a subsequence of positive density. But
then $\rho_{j_k} \to \omega$ since $\omega$ is the unique state
with $\Lambda(\rho) = 0$.

In \cite{J} it is proved that Liouville measure (or any ergodic
measure) is exposed in $\mcal_I$. It is stated in the following
form: For any ergodic invariant probability measure $\mu$, there
exists a continuous function $f$ on $S^* M$ so that $\mu$ is the
unique $f$-maximizing measure in the sense that
$$\int f d\mu = \sup\left\{ \int f dm : m \in \mcal_I \right\}. $$
To complete the proof, one  would need to show that the extreme
point $\omega$ is exposed in $\ecal_I$ for the C* algebra defined
by the norm-closure of $\Psi^0(M)$.

\subsection{ Sketch of Proof of Theorem \ref{QE} (i)}

We now sketch the proof of (\ref{CONVEX}). By time averaging, we
have
\begin{equation} \sum_{\lambda_j \leq E} F (\langle A \phi_k, \phi_k\rangle
- \omega(A)) = \sum_{\lambda_j \leq E} F (\langle \langle A
\rangle_T - \omega(A) \phi_k, \phi_k\rangle ). \end{equation}  We
then apply the Peierls--Bogoliubov inequality
$$\sum_{j=1}^n F ((B \phi_j, \phi_j)) \leq {\rm Tr\,} F (B)$$
with $B = \Pi_E [\langle A \rangle_T - \omega(A)]\Pi_E $ to get:
\begin{equation} \sum_{\lambda_j \leq E} F (\langle \langle A \rangle_T -
\omega(A) \phi_k, \phi_k\rangle  ) \leq {\rm Tr\,} F (\Pi_E
[\langle A \rangle_T - \omega(A)]\Pi_E ).\end{equation}

  Here, $\Pi_E$ is the
spectral projection for $\hat{H}$ corresponding to the interval
$[0, E].$  By the Berezin inequality   (if $F(0) = 0$):
$$\begin{array}{lll} \frac{1}{N(E)} {\rm Tr\,} F (\Pi_E [\langle A \rangle_T - \omega(A)]\Pi_E )
&\leq& \frac{1}{N(E)}  {\rm Tr\,} \Pi_E F ([\langle A \rangle_T -
\omega(A)]) \Pi_E \\ && \\ &= & \omega_E(\phi(\langle A \rangle_T
- \omega(A))). \end{array}$$ As long as $F$ is smooth, $F(\langle
A \rangle_T - \omega(A))$ is a pseudodifferential operator of
order zero with principal symbol $F(\langle \sigma_A \rangle_T -
\omega(A)).$  By the assumption that $\omega_E \rightarrow \omega$
we get
$$\lim_{E \rightarrow \infty}\frac{1}{N(E)} \sum_{\lambda_j \leq E}
F (\langle A \phi_k, \phi_k\rangle  - \omega(A)) \leq \int_{\{H =
1\}} F(\langle \sigma_A \rangle_T - \omega(A)) d\mu_L.$$ As $T
\rightarrow \infty$ the right side approaches $\phi (0)$ by the
dominated convergence theorem and by Birkhoff's ergodic theorem.
Since the left hand side is independent of $T$, this implies that
$$\lim_{E \rightarrow \infty}\frac{1}{N(E)} \sum_{\lambda_j \leq E}
F (\langle A \phi_k, \phi_k\rangle  - \omega(A)) = 0 $$ for any
smooth convex $F$ on ${\rm Spec}(A)$ with  $F (0) = 0.$ \qed
\bigskip

This proof  can only be used directly for scalar Laplacians on
manifolds without boundary, but it still works as a template in
more involved situations. For instance, on  manifolds with
boundary, conjugation by the wave group is not a true automorphism
of the observable algebra. In quantum ergodic restriction theorems
(see \S \ref{BQE}), the appropriate conjugation is an endomorphism
but not an automorphism.  Or when $\Delta$ has continuous spectrum
(as in finite area hyperbolic surfaces), one must adapt the proof
to states which are not $L^2$-normalized \cite{Z8}.

\subsection{QUE in terms of time and space averages}

The quantum unique ergodicity problem (the term is due to
Rudnick-Sarnak \cite{RS}) is the following:

\begin{prob} Suppose the geodesic flow $g^t$ of $(M, g)$ is ergodic on
$S^*M$. Is the operator $K$ in
$$\langle A \rangle = \omega(A) + K$$
 a compact operator? Equivalently is $\qcal = \{d\mu_L\}$?  In this case,  $\sqrt{\Delta}$ is said to be QUE (quantum uniquely ergodic)  \end{prob}

Compactness of $K$ implies that $\langle K \phi_k,  \phi_k \rangle
\to 0$, hence $\langle A \phi_k, \phi_k \rangle \to \omega(A)$
along the entire sequence.

Rudnick-Sarnak conjectured that $\Delta$ of negatively curved manifolds are QUE, i.e. that for any orthonormal
basis of eigenfunctions, the Liouville measure is the only quantum limit \cite{RS}.

\subsection{Converse QE}

So far we have not mentioned Theorem \ref{QE} (2).  An interesting
open problem is the extent to which (2) is actually necessary for
the equivalence to classical ergodicity.

\begin{prob}  Suppose that $\sqrt{\Delta}$ is quantum ergodic in the sense that (1) holds
in Theorem \ref{QE}. What are the properties of the geodesic flow
$g^t$. Is it ergodic (in the generic case)?
\end{prob}

In the larger class of Schr\"odinger operators, there is a simple
example of a Hamiltonian system which is quantum ergodic but not
classically ergodic: namely, a Schr\"odinger operator with a
symmetric double well potential $W$. That is, $W$ is a $W$ shaped
potential with two wells and a $\Z_2$ symmetry exchanging the
wells. The low energy levels consist of two connected components
interchanged by the symmetry, and hence the classical Hamiltonian
flow is not ergodic. However, all eigenfunctions of the
Schr\"odinger operator $- \frac{d^2}{dx^2} + W$ are either even or
odd and thus have the same mass in both wells. It is easy to see
that the quantum Hamiltonian is quantum ergodic.

Recently, B. Gutkin \cite{Gut} has given a two dimensional example
of a domain with boundary which is quantum ergodic but not
classically ergodic and which is a two dimensional analogue of a
double well potential. The domain is a so-called hippodrome
(race-track) stadium. Similarly to the double well potential,
there are two invariant sets interchanged by a $\Z_2$ symmetry.
They correspond to the two orientations with which the race could
occur. Hence the classical billiard flow on the domain is not
ergodic. After dividing by the $\Z^2$ symmetry the hippodrome has
ergodic billiards, hence by Theorem \ref{QE}, the quotient domain
is quantum ergodic. But  the The eigenfunctions are again either
even or odd. Hence the hippodrome is quantum ergodic but not
classically ergodic.

Little is known about converse quantum ergodicity in the abscence
of symmetry.  It is known that if there exists an open set in
$S^*M$ filled by periodic orbits, then the Laplacian cannot be
quantum ergodic (see \cite{MOZ} for recent results and
references). But it is not even known at this time whether
 KAM systems, which have Cantor-like   invariant sets of positive measure, are not
 quantum ergodic. It is known that there exist a positive proportion of approximate
eigenfunctions (quasi-modes) which localize on the invariant tori,
but  it has not been proved that a positive proportion of actual
eigenfunctions have this localization property.
\bigskip

\subsection{\label{QWMS} Quantum weak mixing}

There are parallel results on quantizations of weak-mixing
geodesic flows. We recall that  the geodesic flow of $(M, g)$ is
weak mixing if the operator $V^t$ has purely continuous spectrum
on the orthogonal complement of the constant functions in
$L^2(S^*M, d\mu_L)$.

\begin{theo} \label{QWM} \cite{Z8} The geodesic flow  $g^t$ of $(M, g)$
is weak mixing if and only if the conditions (1)-(2) of Theorem
\ref{QE}  hold and additionally, for any $A \in \Psi^o(M)$,
$$(\forall \epsilon)(\exists \delta)
\limsup_{\lambda \rightarrow \infty} \frac{1} {N(\lambda)}
\sum_{{j\not= k: \lambda_j, \lambda_k \leq \lambda}\atop {
|\lambda_j - \lambda_k-\tau| < \delta}} |( A \phi_j, \phi_k )|^2 <
\epsilon \;\;\;\;\;\;\; (\forall \tau  \in \R )$$
\end{theo}

The restriction $j\not =k$ is of course redundant unless $\tau =
0$, in which case the statement coincides with quantum ergodicity.
This result follows from the general asymptotic formula, valid for
any compact Riemannian manifold $(M, g)$, that \begin{equation}
\label{QMF} \begin{array}{l}  \frac{1}{N(\lambda)}  \sum_{i \not=
j, \lambda_i, \lambda_j \leq \lambda} |\langle A \phi_i, \phi_j
\rangle|^2 \left|\frac{\sin T(\lambda_i -\lambda_j -\tau)}
{T(\lambda_i -\lambda_j -\tau)}\right|^2 \\ \\
 \sim ||\frac{1}{2T}
\int_{- T}^T e^{i t \tau} V_t(\sigma_A) ||_2^2 - |\frac{\sin T
\tau}{T \tau}|^2 \omega(A)^2. \end{array} \end{equation}  In the
case of weak-mixing geodesic flows, the right hand side $\to 0$ as
$T \to \infty$. As with diagonal sums, the sharper result is true
where  one averages  over the short intervals $[\lambda, \lambda +
1]$.

Theorem \ref{QWM}  is based on expressing the spectral measures of
the geodesic flow in terms of matrix elements. The main limit
formula is:

\begin{equation} \label{SPECMEAS} \int^{\tau +\varepsilon }_{\tau-\varepsilon
} d\mu_{\sigma_A}:=\lim_{\lambda \rightarrow
\infty}\frac{1}{N(\lambda )}\sum_{i,j: \;\; \lambda _j\leq
\lambda,  \;\; |\lambda _i-\lambda _j-\tau|<\varepsilon
\\}\;
 |\langle A\varphi_i,
\varphi_j \rangle|^2\;\;,  \end{equation} where $d\mu_{\sigma_A}$
is the spectral measure for the geodesic flow corresponding to the
principal symbol of $A$,  $\sigma_A \in C^{\infty} (S^*M, d\mu_L)$.
Recall that the spectral measure of $V^t$ corresponding to $f\in
L^2$ is the measure $d\mu_f$ defined by
$$\langle V^t f,f \rangle_{L^2(S^*M)}  = \int_{\R} e^{\oit\tau} d\mu_f(\tau)\;.$$

\subsection{Evolution of Lagrangian states}

In this section, we briefly review results on evolution of
Lagrangian states and coherent states. We follow in particular the
article of R. Schubert \cite{Schu3}.

A simple Lagrangian or WKB state has the form $\psi_{\hbar}(x) =
a(\hbar, x) e^{\frac{i}{\hbar} S(x)}$ where $a(\hbar, x)$ is a
semi-classical symbol $a \sim \sum_{j = 0}^{\infty} \hbar^j
a_j(x). $ The phase $S$ generates the Lagrangian submanifold $(x,
dS(x)) \subset T^*M$.

It is proved in Theorem 1 of \cite{Schu3} that if $g^t$ is Anosov
and if $\Lambda$ is transversal to the stable foliation $W^s$
(except on a set of codimension one), then there exists $C, \tau
> 0$ so that for every smooth density on $\Lambda$ and every
smooth function $a \in C^{\infty}(S^*M)$, the Lagrangian state
$\psi$ with symbol $\sigma_{\psi}$ satisfies,
\begin{equation} \left| \langle U^t \psi, A U^t \psi \rangle -
\int_{S^*M} \sigma_A  d\mu_L \int_{\Lambda} |\sigma_{\psi}|^2
\right| \leq C h e^{\Gamma |t|} + c e^{- t \tau}. \end{equation}
In order that the right side tends to zero as $\hbar \to 0, t \to
\infty$ it is necessary and sufficient that
$$t \leq \frac{1 - \epsilon}{\Gamma} |\log \hbar|. $$

\section{\label{HYP} Concentration of eigenfunctions around hyperbolic closed geodesics}

As mentioned above, the  quantum ergodicity Theorem \ref{QE}
leaves open the possible existence of a sparse (zero density)
subsequence of eigenfunctions which `weakly scar' on a hyperbolic
orbit $\gamma$  in the sense that its quantum limit $\nu_0$
contains a non-trivial multiple of the periodic orbit measure $c
\mu_{\gamma}$ as a non-zero ergodic component. The Anantharaman
entropy bound shows that when $(M, g)$ is Anosov, there cannot
exist such a sequence of eigenfuntions (or even quasi-modes) which
tend to $\mu_{\gamma}$ itself, but a quantum limit could have the
form $c_1 \mu_{\gamma} + c_2 \mu_L$ for certain $c_1, c_2 $
satisfying $c_1 + c_2 = 1$. The question we address in this
section is the possible mass profile of such a scarring
eigenfunction in a neighborhood of $\gamma$. More precisely, {\it
how much mass does $\phi_{\lambda}$ have in a shrinking
$\hbar^{1/2 - \epsilon}$ tube around a hyperbolic closed
geodesic?} This question will surface again in \S \ref{FGAMMA}.

We first note that the existence of a quantum limit of the form
$c_1 \mu_{\gamma} + c_2 \mu_L$ for $(M, g)$ with Anosov geodesi
flow is not so implausible. Analogous eigenfunction sequences do
exist for the so-called quantum cat map \cite{FNB,FN}. And
exceptional sequences `scarring' on a certain 1-parameter family
of periodic orbits exists for the Bunimovich stadium \cite{Has}.
At this time,
 there are no known examples of sequences of
eigenfunctions or quasi-modes  for $(M, g)$ with ergodic geodesic
flow that `weakly scar' along a hyperbolic closed orbit $\gamma$
and no results prohibiting them.

For simplicity, assume that  $(M, g)$ is a Riemannian manifold of
dimension $2$ with a closed geodesic $\gamma $ of length $L$. We
assume $\gamma$ is an embedded (non self-intersecting) curve.  If
$\phi_j$ is a Laplace eigenfunction, we  define  the mass profile
of $\phi_{j}$ near $\gamma$ to be the function
$$M(\phi_j) (r) = \int_{d(x, \gamma) = r} |\phi_j|^2 dS, $$
where $dS \wedge d r = dV$.

Before considering possible mass profiles of eigenfunctions
scarring on hyperbolic closed geodesics, let us recall the
opposite and much better known case of scarring of  Gaussian beams
along elliptic closed geodesics on surfaces \cite{Ra,Ra2,BB}. When
$\gamma$ is an elliptic closed geodesic, then there always exists
a sequence of quasi-modes (Gaussian beams) which concentrates on
$\gamma$. As the name suggests, Gaussian beams $\psi_{\lambda}$
oscillate like $e^{i \lambda s}$ along $\gamma$ and resemble
Gaussians $\sqrt{\lambda} e^{- \lambda \langle A(s) y, y\rangle} $
(for some positive symmetric matrix $A(s)$) in the transverse
direction with height $\lambda^{1/4}$ and concentrated in a
$\frac{1}{\sqrt{\lambda}}$ tube around $\gamma$. Thus, the mass
profile is a Gaussian probability measure with mean zero and
variance $\lambda^{-1/2}$. The local model for such quasi-modes is
that of a harmonic oscillator in the fibers of $N_{\gamma}$. One
can construct the quasi-mode so that it is of infinite order. Of
course, stable elliptic orbits do not exist when $(M, g)$ has
ergodic geodesic flow (by the KAM theorem).

Now consider hyperbolic closed orbits. It was pointed out by
Duistermaat \cite{Dui} (section 1.5) that one cannot construct
analogous quasi-modes associated to hyperbolic closed geodesics as
Lagrangian states.  The stable/unstable manifolds $\Lambda_{\pm}$
of $\gamma$, containing the geodesics which spiral in towards
$\gamma$, are invariant Lagrangian submanifolds, but the only
invariant half-density on the Lagrangians is the `delta'-density
on the closed geodesic.

Further, there are apriori limitations on the degree to which
eigenfunctions sequences can concentrate around  hyperbolic closed
geodesic on any $(M, g)$.

\begin{theo} \label{BZ} \cite{BZ,Chr,CVP} Let $(M, g)$ be a compact Riemannian manifold, and let
$\gamma$ be a hyperbolic closed geodesic. Let $U$ be any tubular
neighborhood of $\gamma$ in $M$.  Then for any eigenfunction
$\phi_{\lambda}$, there exists a constant $C$ depending only on
$U$ such that
$$\int_{M \backslash U} |\phi_{\lambda}|^2 dV_g  \geq \frac{C}{\log \lambda}
||\phi_{\lambda}||^2_{L^2}. $$

More generally, let  $A \in \Psi^0(M)$ be a pseudo-differential
orbit whose symbol equals one in a neighborhood of $\gamma$ in
$S^*_g M$ and equals zero outside another neighborhood. Then for
any eigenfunction $\phi_{\lambda}$ $||(I - A)
\phi_{\lambda}||_{L^2} \geq \frac{C}{\sqrt{\log \lambda}}
||\phi_{\lambda}||_{L^2}. $
\end{theo}

This allows scarring sequences along a hyperbolic orbit to occur,
it just limits the rate at which the mass concentrates near
$\gamma$. It implies  that the mass profile of a sequence of
eigenfunctions concentrating on a hyperbolic close geodesic has
``long tails'', i.e. there is a fairly large amount of mass far
away from the geodesic, although sequences with the quantum limit
$\mu_{\gamma}$  must tend to zero outside of any tube around the
closed geodesic. Note that this result makes no dynamical
hypotheses. It applies equally to $(M, g)$ with integrable
geodesic flow and to those with Anosov geodesic flow. To the
author's knowledge, there do not exist more precise results in the
Anosov case.

An obvious question at this point is whether there are any
examples of $(M, g)$, with any type of geodesic flow, possessing a
sequence of eigenfunctions scarring on a closed hyperbolic orbit.
The answer to this question is `yes'. It is simple to see that
such eigenfunctions exist  in the opposite extreme of completely
integrable systems, for instances surfaces of revolution like
peanuts with hyperbolic waists.
 Examples include joint eigenfunctions
of the square root of the Laplacian and rotation on surfaces of
revolution with a hyperbolic waist. A truncated  hyperbolic
cylinder is another example studied in \cite{CVP}.  In this case,
the joint spectrum fills out the image of the moment map
$(p_{\theta}, |\xi|): T^* M \to \R^2$, where $p_{\theta}(x, \xi) =
\langle \xi, \frac{\partial}{\partial \theta} \rangle$ is the
angular momentum. At critical distances to the axis of rotation,
the lattitude circles are closed geodesics and the level set of
the moment map becomes singular. If the surface is shaped like a
peanut, the waist is a hyperbolic closed geodesic. Joint
eigenfunctions whose joint eigenvalues are asymptotic to the
singular levels of the moment always exist. The modes concentrate
on the level sets of the moment map, and in fact they concentrate
on the hyperbolic closed geodesic.

\subsection{\label{MASSGAMMA} Mass concentration of special eigenfunctions on
hyperbolic orbits in the quantum integrable case}

The mass profile of scarring eigenfunctions near a hyperbolic   in
the completely integrable case is studied in \cite{CVP} on tubes
of fixed radius and in  \cite{NV,TZ2} on tubes of shrinking
radius. Let $\gamma \subset S^* M$ be a closed hyperbolic geodesic
of an $(M, g)$ with
 completely integrable geodesic flow and for which $\Delta_g$ is
 quantum integrable (i.e. commutes with a maximal set of
 pseudo-differential operators; see \cite{TZ2} for background). We
 then consider joint eigenfunctions $\Delta_g$ and  of these
 operators. It is known (see e.g. \cite{TZ2}, Lemma 6) that there
 exists a special sequence of eigenfunctions concentrating on the
 momentum level set of $\gamma$. We will call them (in these notes) the
 {\it $\gamma$-sequence}.

  Assume for simplicity that the moment level set of $\gamma$ just
consists of the orbit together with its stable/unstable manifolds.
Then it is proved in \cite{TZ2} that the mass of $\phi_{\mu}$ in
the shrinking tube of radius $h^{\delta}$ around $\gamma$ with
$\delta < \half$ is $\simeq (1 - 2 \delta)$ (see also \cite{NV}
for a closely related result in two dimensions). Thus, the mass
profile of such scarring integrable eigenfunctions only differs by
the numerical factor $(1 - 2 \delta)$ from the mass profile   of
Gaussian beams. The difference is that the `tails' in the
hyperbolic case are longer. Also the peak is logarithmically
smaller than in the elliptic case (a somewhat weaker statement is
proved in \cite{TZ2}).

Let us state the result precisely and briefly sketch the argument.
It makes an interesting comparison to the situation discussed
later on of possible scarring in the Anosov case.

 We denote by $\pi: S^* M \to M$ the standard projection and let
 $\pi(\gamma)$ be the image of $\gamma$ in $M$.
We denote by $T_{\epsilon}(\pi(\Lambda))$ the tube of radius  $
\epsilon$ around  $\pi(\Lambda)$. For $0 < \delta < 1/2$, we
introduce a cutoff $\chi_{1}^{\delta} (x;\hbar) \in
C^{\infty}_{0}(M)$ with $0 \leq \chi_{1}^{\delta} \leq 1,$
satisfying
\begin{itemize} \label{tube3}
\item (i) supp $\chi_{1}^{\delta} \subset T_{\hbar^{\delta} }
(\pi(\gamma))$ \item (ii)
 $\chi_{1}^{\delta} = 1$ on $ T_{3/4 \hbar^{\delta}           }  (\pi(\gamma))$.
\end{itemize}

\begin{theo} \label{SSM} Let $\gamma$ be a hyperbolic closed orbit in
$(M, g)$ with quantum integrable $\Delta_g$, and let
$\{\phi_{\mu}\}$ be an $L^2$ normalized $\gamma$-sequence of joint
eigenfunctions
  Then for any   $ 0 \leq \delta <1/2$, $
\lim_{\hbar \to 0} (Op_{\hbar}(\chi_{1}^{\delta}) \phi_{\mu},
\phi_{\mu}) \geq (1 - 2 \delta).$
\end{theo}

\subsubsection{Outline of proof}

For simplicity we assume $\dim M = 2$.
 Let  $\chi_{2}^{\delta}(x,\xi;\hbar)
\in C^{\infty}_{0}(T^{*}M ; [0,1])$ be a
 second cutoff supported in a radius $\hbar^{\delta}$ tube,  $\Omega (\hbar)$, around $\gamma$ with
$ \Omega(\hbar) \subset supp \chi_{1}^{\delta}$
 and such that
$ \chi_{1}^{\delta}  = 1 \,\,\mbox{on} \,\, supp
\chi_{2}^{\delta}.$ Thus, $ \chi_{1}^{\delta}(x,\xi) \geq
\chi_{2}^{\delta}(x,\xi),$  for any $(x,\xi) \in T^{*}M$.
 By the Garding inequality, there exists a constant
$C_{1}>0$ such that:
\begin{equation} \label{WG}
(Op_{\hbar}(\chi_{1}^{\delta}) \phi_{\mu},\phi_{\mu)}) \geq
(Op_{\hbar}(\chi_{2}^{\delta})\phi_{\mu}, \phi_{\mu}) - C_{1}
\hbar^{1-2\delta}.
\end{equation}

We now conjugate the right side to the model setting of $S^1
\times \R^1$, i.e. the normal bundle $N_{\gamma}$ to $\gamma$. The
conjugation is done by $\hbar$ Fourier integral operators and is
known as conjugation to quantum Birkhoff normal form. In the model
space, the conjugate of $\Delta_g$ is  a function of $D_s =
\frac{\partial}{i\partial s}$ along $S^1$ and the dilation
operator $\hat{I}^{h} := \hbar (D_{y} y + y D_{y}) $ along $\R$.
By Egorov's theorem
\begin{equation} \label{bound}
(Op_{\hbar}(\chi_{2}^{\delta}) \phi_{\mu}, \phi_{\mu}) =
|c(\hbar)|^2  (Op_{\hbar}(\chi_{2}^{\delta} \circ \kappa) u_{\mu},
u_{\mu}) - C_{3} \hbar^{1-2\delta}
\end{equation}
\noindent where $ u_{\mu} (y,s;\hbar)$ is the model joint
eigenfunction of $D_s, \hat{I}^{h}$, and $c(\hbar)$ is a
normalizing constant.  This reduces our problem to estimating the
explicit matrix elements $(Op_{\hbar}(\chi_{2}^{\delta} \circ
\kappa) u_{\mu}, u_{\mu})$ of the  special eigenfunctions in the
model setting. The operator $\hat{I}^h$ has a continuous spectrum
with generalized eigenfunctions $y^{-1/2 + i\lambda/\hbar}$. The
eigenfunctions on the `singular' level $\gamma$ correspond to
$\lambda \sim E \hbar$.  A caluclation shows that the mass in the
model setting is given by
\begin{equation} \label{modelmass1}
M_{h} = \frac{ 1} { \log \hbar} \, \left( \int_{0}^{\infty}
\chi(\hbar \xi/\hbar^{\delta}) \left| \int_{0}^{\infty} e^{-ix}
x^{-1/2 + i \lambda/\hbar} \chi(x/\hbar^{\delta}\xi) dx
\right|^{2} \frac{d\xi}{\xi} \right).
\end{equation} Classical analysis shows that the right side tends
to $1 - 2 \delta$ as $\hbar \to 0$ if $\lambda \sim E \hbar$.

\subsection{Comparison to Anosov case}

This large  mass profile may be  a special feature of integrable
systems, reflecting the fact that the stable and unstable
manifolds of the hyperbolic closed orbit coincide. A heuristic
picture of the mass concentration in this case is as follows:
Since the eigenfunction is a stationary state, its mass must be
asymptotically invariant under the geodesic flow. Since the flow
compresses things exponentially in the stable direction and
expands things in the unstable direction, the mass can only
concentrate on the fixed closed geodesic and on the unstable
manifold $W^u$. But $W^u$ returns to $\gamma$  as the stable
manifold $W^u$ (like a figure $8$). Hence, the only invariant
measure is the one supported on the closed geodesic and the the
mass can only concentrate there. Although it does not seem to have
been proved in detail yet, it is very plausible that the mass
concentration in the integrable case provides an upper bound for
any $(M, g)$, i.e. it has `extremal' mass concentration.

   In the Anosov case, the stable and unstable manifolds of $\gamma$ are transverse,
   so the dynamical picture is completely different.
    First,    there is no obvious
   mechanism as in the integrable case forcing  mass of any sequence of eigenfunctions to
   concentrate on the Lagrangian manifold formed by $\gamma$ and $W^u$
   in the Anosov case. If mass did concentrate around $\gamma$, it
   would
   still be  forced to
   concentrate on $\gamma$ and on $W^u$, but $W^u$ becomes dense
   in $S^*M$. Hence  some of the mass must spread out uniformly
   over $S^*M$ and is lost from a neighborhood of $\gamma$. This makes it plausible that one does not get mass
   concentration for eigenfunctions around hyperbolic closed orbits of Anosov
   systems.

     Yet,    in the ``cat map" analogue, there do exist  scarring
eigenfunctions  \cite{FNB,FN} for a special sparse sequence of
Planck constants. The multiplicities of the eigenvalues of the cat
map for this sequence saturate the bound $\hbar^{-1}/|\log
\hbar|$, and therefore one can build up eigenfunctions with very
unexpected properties. There is a surprising  quantum mechanism
forcing producing  concentration of special modes at hyperbolic
fixed points which was discovered by Faure-Nonnenmacher-de
Bi\`evre. The eigenfunction amplitude spreads out along
$\hbar^{-1/2}$ segments of  $W^u$. These segments are
$\hbar^{1/2}$ dense and so there is interference between the
amplitudes on close pieces of $W^u$. The interference is
constructive along $W^s$ and the mass builds up there and then as
in the integrable case gets recycled back to the hyperbolic orbit.

It is not known whether this phenomenon occurs in the Riemannian
setting. It is presumably related to the existence of sparse
subsequences of eigenvalues with the same large multiplicities.

\section{\label{BQE} Boundary quantum ergodicity and quantum ergodic
restriction}

In this section, we briefly discuss quantum ergodic restriction
theorems. The general question is whether restrictions of quantum
ergodic eigenfunctions to hypersurfaces (or microlocally, to
cross-sections of the geodesic flow) are quantum ergodic on the
hypersurfaces. We only consider here  the case where the
hypersurface is the boundary of a domain with boundary, which was
studied in \cite{GL,HZ,Bu}. More general quantum ergodic
restriction theorems are  given in \cite{TZ3}. The boundary
results play an important role in the recent scarring results of
A. Hassell for eigenfunctions on the stadium.

We thus consider the  boundary values $u_j^{b}$  of interior
eigenfunctions
$$\left\{ \begin{array}{l} \Delta \; u_j = \lambda_j^2\;  u_j\;\; \mbox{in} \; \Omega, \;\; \langle u_j, u_k
 \rangle_{L^2(
\Omega)} = \delta_{jk}, \\ \\B u_j |_{Y} = 0,\;\;\;\; Y = \partial
\Omega
\end{array} \right.
$$ of the Euclidean  Laplacian $\Delta$ on a compact piecewise smooth domain $\Omega \subset
\R^n$ and with  classically ergodic billiard map $\beta:  B^* Y
\to B^*Y$, where $Y = \partial \Omega$. Here $A_h$ is a zeroth
order semiclassical pseudodifferential operator on $Y$. The
relevant notion of boundary values  (i.e. Cauchy data) $u_j^{b}$
depends on the boundary condition. We only consider the boundary
conditions
\begin{equation} B u = \left\{ \begin{array}{ll}  u|_Y, & \mbox{Dirichlet} \\ & \\
  \partial_{\nu} u |Y, & \mbox{Neumann} \end{array} \right.
\label{K-bc}\end{equation}
 Let $\Delta_B$
 denote the positive Laplacian on $\Omega$ with boundary
conditions $B u = 0$. Then $\Delta_B$ has discrete spectrum $0 <
\lambda_1 < \lambda_2 \leq \dots \to \infty$, where we repeat each
eigenvalue according to its multiplicity, and for each $\lambda_j$
we may choose an $L^2$ normalized eigenfunction $u_j$.

 The
 algebra of observables in the boundary setting is the algebra
$\Psi_h^0(Y)$ of zeroth order semiclassical pseudodifferential
operators on $Y,$ depending on the parameter $h \in [0, h_0]$. We
denote the symbol of  $A = A_h \in \Psi_h^0(Y)$ by  $a =
a(y,\eta,h)$. Thus $a(y, \eta) = a(y, \eta, 0)$ is a smooth
function on $T^*Y$.

To each boundary condition $B$ corresponds
\begin{itemize}

\item  A specific notion of  boundary value $u_j^{b}$ of the
eigenfunctions $u_j$. We denote the $L^2$-normalized boundary
values by $\hat u_j^{b} = u_j^b/||u_j^b||.$

\item A specific measure $d\mu_B$ on $B^* (Y). $

\item A specific state $\omega_B$ on the space $\Psi_h^0(Y)$ of
semiclassical pseuodifferential operators of order zero defined by
\begin{equation}\begin{aligned}
\omega_B(A) &= \frac{4}{\vol(S^{n-1})\vol(\Omega)} \int_{B^*Y}
a(y,\eta)  d\mu_B.
\end{aligned}\label{omegaB}\end{equation}
\end{itemize}

 Here is a table
of the relevant boundary value notions.  In the table, $d \sigma$
is the natural symplectic volume measure on $B^*Y$. We also define
the function $\gamma(q)$ on $B^*Y$ by
\begin{equation}
\gamma(q) = \sqrt{1 - |\eta|^2}, \quad q = (y,\eta).
\label{a-defn}\end{equation}

\bigskip
\noindent \hskip 50pt {\large \begin{tabular}{|c|c|c|c|c|} \hline
\multicolumn{4}{|c|}{\bf Boundary Values} \\ \hline
 B & $Bu$  &   $u^{b}$ & $d\mu_B$   \\
\hline
 Dirichlet &  $u|_{Y}$ &  $\lambda^{-1}\; \partial_{\nu} u |_{Y}$ & $\gamma(q) d\sigma $ \\
\hline
 Neumann &   $\partial_{\nu} u |_{Y}$  &  $u|_{Y}$  & $\gamma(q)^{-1}
 d\sigma$\\
\hline
% Robin  &  $(\partial_{\nu} u - \kappa u) |_{Y}$ &  $u|_{Y}$ & $\gamma(q)^{-1} d\sigma$  \\ \hline
%$\Psi^1$-Robin & $(\partial_{\nu} u - K u) |_{Y}$ & $u|_{Y}$ & $\displaystyle{\frac{\gamma(q)d\sigma }{ \gamma(q)^2 + k(q)^2}} $\\
%\hline
 \end{tabular}}
\bigskip

The limit states are  determined by  dictated by the local Weyl
law
 for the boundary condition $B$.

\begin{lem} \label{LWLb}
 Let $A_h$ be either the identity operator on $Y$ or a zeroth order semiclassical
  operator on $Y$ with kernel supported away from the singular set.   Then for any of the above boundary conditions $B$,
 we have:
 \begin{equation}\begin{gathered}
\lim_{\lambda \to \infty} \frac{1}{N(\lambda)} \sum_{\lambda_j
\leq \lambda}  \langle A_{h_j} u_j^b, u_j^b \rangle =
\omega_B(A) , \quad  B = \text{ Neumann}, \\
\lim_{\lambda \to \infty} \frac{1}{N(\lambda)} \sum_{\lambda_j
\leq \lambda}  \langle A_{h_j} u_j^b, u_j^b \rangle =
\omega_{B}(A), \quad B = \text{ Dirichlet}.\end{gathered}
\label{Weyl-limit}\end{equation}
\end{lem}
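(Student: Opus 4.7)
The plan is to reduce the boundary local Weyl law to the interior local Weyl law (\ref{LWL}) via a Rellich-type commutator identity, following the approach of Burq \cite{Bu} and Hassell--Zelditch \cite{HZ}. First, a Tauberian argument applied to the monotone counting function $N(\lambda)$ reduces the statement to the summatory asymptotic
$$\sum_{\lambda_j \le \lambda} \langle A_{h_j} u_j^b, u_j^b\rangle \sim \omega_B(A)\, N(\lambda),$$
and by a density argument we may further assume that the principal symbol $a = \sigma(A_h)$ is compactly supported in $B^*Y$ away from the glancing boundary $\partial B^*Y$. The hypothesis that the kernel of $A_h$ avoids the singular set of $\partial\Omega$ lets one localize everything to the smooth part of $Y$.

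Next, choose a smooth vector field $Z$ on $\bar\Omega$ that coincides with the outward unit normal $\partial_\nu$ in a collar of $Y$ and vanishes outside, and build a tangential semiclassical extension $\widetilde A_h \in \Psi_h^0(\Omega)$, supported in the collar, whose restriction to $Y$ agrees with $A_h$. Green's formula, combined with the boundary condition $B u_j = 0$, then yields an identity of the form
$$\langle [h^2\Delta,\, Z\widetilde A_h]\, u_j,\, u_j\rangle_\Omega = \langle A_{h_j}\, M_B\, u_j^b,\, u_j^b\rangle_Y + O(h_j),$$
where the boundary multiplier is $M_B = \gamma(q) = \sqrt{1-|\eta|^2}$ for Dirichlet data and $M_B = \gamma(q)^{-1}$ for Neumann data. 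Geometrically, $\gamma(q)$ is the cosine of the angle between the inward geodesic through $(y,\eta)$ and the normal at $y$, and accounts exactly for the Jacobian that converts boundary integration to interior integration along the normal flow-out.

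The third step is to sum over $\lambda_j \le \lambda$ and apply the interior local Weyl law (\ref{LWL}) to the semiclassical operator $h_j^{-1}[h_j^2\Delta,\, Z\widetilde A_h]$, whose principal symbol is the Poisson bracket $\{|\xi|^2,\, \sigma(Z\widetilde A_h)\}$. Since this bracket is the derivative of $\sigma(Z\widetilde A_h)$ along the geodesic flow, by Stokes' theorem its Liouville integral over $B^*\Omega$ collapses to a flux integral over $S^*\Omega|_Y$ of $a(y,\eta)$ against the transverse momentum $\xi\cdot\nu$. Pushing forward along the symplectic projection $S^*\Omega|_Y \to B^*Y$ rewrites this as an integral of $a$ against $\gamma(q)^{\pm 1}\, d\sigma$, and the constant $(2\pi)^{-n}$ from (\ref{LWL}) combines with $\vol(S^{n-1})/n$ (from the Euclidean volume of the unit ball in the leading Weyl term) to produce precisely the normalization $4/(\vol(S^{n-1})\vol(\Omega))$ appearing in the definition of $\omega_B(A)$.

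The main technical obstacle is the second step: establishing the Rellich commutator identity with the correct boundary multiplier $M_B$ and a uniform $O(h_j)$ remainder. This requires a careful use of tangential semiclassical calculus up to the boundary, including extending $a(y,\eta)$ into the collar via pullback along the normal exponential map and verifying that the expansion of $[h^2\Delta,\, Z\widetilde A_h]$ in the semiclassical composition calculus has no hidden boundary contributions at subprincipal order. The support assumption away from $\partial B^*Y$ is essential, since the Neumann multiplier $\gamma(q)^{-1}$ is singular at glancing rays; this is also the reason for assuming that $A_h$ has kernel supported away from the singular set of $Y$, so that diffractive and corner contributions never enter the parametrix.
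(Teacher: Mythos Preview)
The paper does not give its own proof of this lemma; it is quoted from \cite{HZ,GL,Bu}, and only Theorem~\ref{mainb} is sketched afterwards. Your overall plan---pass from boundary to interior via a Green/Rellich commutator identity, apply the interior local Weyl law, then collapse the Poisson-bracket integral by Stokes---is exactly the strategy of \cite{HZ}, so you are on the right track.

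However, the identity in your step~2 is not right as written. For Dirichlet, with $P=Z\widetilde A_h$ ($Z=\partial_\nu$ near $Y$, $\widetilde A_h$ tangential), Green's formula together with $u_j|_Y=0$ give simply
\[
\langle [h_j^2\Delta,\,Z\widetilde A_{h_j}]\,u_j,\,u_j\rangle_\Omega \;=\; -\,\langle A_{h_j} u_j^b,\,u_j^b\rangle_Y \;+\; O(h_j),
\]
with \emph{no} multiplier on the boundary side. For Neumann, the analogous computation (using $h^2\partial_\nu^2=-h^2\Delta+h^2\Delta_Y$ in the collar, applied to eigenfunctions) produces the multiplier $(1-h^2\Delta_Y)$, i.e.\ a symbol $\gamma(q)^2$, not $\gamma^{-1}$. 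The weights $\gamma^{\pm 1}$ in $d\mu_B$ appear only after step~3: Stokes on $S^*\Omega$ leaves a flux $\int_{S^*_Y\Omega}2\xi_\nu^2\,a$, and the two-to-one projection $S^*_Y\Omega\to B^*Y$ has Jacobian $\gamma^{-1}$, yielding $\int_{B^*Y}a\,\gamma\,d\sigma$ in \emph{both} cases. Combining, Dirichlet gives $\gamma\,d\sigma$ directly, while for Neumann one must divide that $\gamma$ by the $\gamma^2$ coming from the Rellich side to obtain $\gamma^{-1}d\sigma$. Separately, $[h_j^2\Delta,\,Z\widetilde A_{h_j}]$ is already a zeroth-order semiclassical operator (the classical vector field $Z$ contributes an $h^{-1}$ which cancels the $h$ from the commutator), so the extra factor $h_j^{-1}$ you insert in step~3 should be dropped.
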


The boundary quantum ergodic theorem is:

\begin{theo}\label{mainb} \cite{HZ,GL,Bu}
Let  $\Omega \subset \RR^n$ be a bounded piecewise smooth manifold
 with  ergodic billiard map. Let
$\{u_j^{b}\}$ be the boundary values of the eigenfunctions
$\{u_j\}$ of $\Delta_B$ on $L^2(\Omega)$ in the sense of the table
above.
 Let $A_h$ be a semiclassical operator of order zero on $Y$. Then there is a subset $S$ of the positive integers, of density one,
such that
\begin{equation}\begin{gathered}
\lim_{j \to \infty, j \in S} \langle A_{h_j} u_j^b,
u_j^b \rangle = \omega_B(A), \quad B = \text{ Neumann}, \\
\lim_{j \to \infty, j \in S}  \langle A_{h_j} u_j^b, u_j^b \rangle
= \omega_{B}(A), \quad B = \text{ Dirichlet},\end{gathered}
\label{main-eqn}\end{equation} where $h_j = \lambda_j^{-1}$ and
$\omega_B$ is as in \eqref{omegaB}.
\end{theo}

In the case $A = I$ and  for the Neumann boundary condition, we
have
$$
\lim_{j \to \infty, j \in S} \| u_j^b \|_{L^2(Y)}^2 = \frac{2
\vol(Y)}{\vol(\Omega)},
$$
while for the Dirichlet boundary condition,
$$
\lim_{j \to \infty, j \in S} \| u_j^b \|_{L^2(Y)}^2 = \frac{2
\vol(Y)}{n\vol(\Omega)}.
$$

\subsection{Sketch of proof}

The fact that the quantum limit state $\omega_B$ and the
corresponding measure $d\mu_B$ are not the natural symplectic
volume measure $d \sigma$ on $B^*Y$ is due  to the fact that the
quantum dynamics is defined by an endomorphism rather than an
automorphism of the observable algebra.  In the Neumann case, the
dynamics are generated by the operator $F_h$ on $Y$ with kernel
\begin{equation} \begin{gathered}
F_h(y,y') = 2\frac{\pa}{\pa \nu_y} G_0(y,y',h^{-1}), \quad y \neq
y'
\in Y, \text{ where }\\
G_0(y,y',\lambda) =  \frac{i}{4} \lambda^{d-2} (2 \pi \lambda
|z-z'|)^{-(d-2)/2} \Ha_{d/2-1}(\lambda | z-z' |)
\end{gathered}\label{Neumann-F}\end{equation}
is the free outgoing Green function on $\RR^d$. These are (almost)
semi-classical Fourier integral operators whose phase functions,
the boundary distance function $d_b(y, y') = |y - y'$ ($y, y' \in
Y$) generates the billiard map.  It is well known that this
operator leaves the boundary values of Neumann eigenfunctions
$u_j^{b}$ invariant:
\begin{equation} \label{FONE}
F_{h_j} u_j^{b} = u_j^{b}, \quad j = 1,2, \dots
\end{equation}
It follows that the states
\begin{equation} \rho_j(A) :=  \langle A_{h_j} u_j^b, u_j^b \rangle
\end{equation}
are invariant for $F_{h_j}$. The family $\{F_{h}\}$ defines a
semiclassical Fourier integral operator associated to the billiard
map $\beta$ (for convex $\Omega$), plus some negligible terms. The
quantum dynamics on $\Psi^0_h (Y)$ is thus generated by the
conjugation
\begin{equation} \label{QDF} \alpha_{h_j} (A_{h_j}) =
F_{h_j}^*\;A_{h_j} \; F_{h_j}
\end{equation}
This is analogous to the interior dynamics generated by the wave
group $U^t_B$ with the boundary conditions,  but  unlike $U^t_B$,
$F_h$ is not unitary or even normal. Indeed, the zeroth order part
of $F_h^* F_h$ is a semiclassical pseudodifferential operator with
a non-constant symbol.

The Egorov type result for the operator $F_h$ is as follows: Let
$\beta$ denote the billiard map on $B^* Y^o$ and let $A_h =
\Op(a_h)$ be a zeroth order operator whose symbol $a(y,\eta,0)$.
Let $\gamma$ be given by \eqref{a-defn}. Then
$$
F_h^* A_h  F_h = \tilde A_h + S_h,
$$
where $\tilde A_h$ is a zeroth order pseudodifferential operator
and $\| S_h \|_{L^2 \to L^2} \leq C h$. The symbol of $\tilde A_h$
is
\begin{equation}
 \tilde a = \begin{cases}
\gamma(q) \gamma(\beta(q))^{-1} a(\beta(q)) , \quad q \in B^*Y \\
0 , \phantom{\gamma(q)^{-1} \gamma(\beta(q)) b(\beta(q)) , \ } q
\notin B^*Y. \end{cases} \label{Egorov-formula}\end{equation} This
is a rigorous version of the statement that $F_h$ quantizes the
billiard ball map.  This Egorov theorem is relevant to the Neumann
boundary problem. In the Dirichlet case, the relevant operator is
$F_h^*$.

The unusual transformation law of the symbol reflects the fact
that \eqref{QDF} is not an automorphism. In the spectral theory of
dynamical systems, one studies the dynamics of the billiard map
$\beta$ on $B^* Y$ through the associated `Koopman' operator
$$\mathcal{U}: L^2(B^*Y, d \sigma)
\to L^2(B^* Y, d\sigma),\;\; \mathcal{U} f (\zeta) =  f(\beta
(\zeta)).$$
 Here,
$d\sigma$ denotes the usual  $\beta$-invariant symplectic volume
measure on $B^* Y$. From the symplectic  invariance it follows
that $\mathcal{U}$ is a unitary operator.  When $\beta$ is
ergodic, the unique invariant $L^2$-normalized eigenfunction is a
constant $c$.

However, Egorov's theorem (\ref{Egorov-formula}) in the boundary
reduction involves the positive function $\gamma \in
C^{\infty}(B^*Y)$, and the relevant  Koopman operator is
$$T f (\zeta) = \frac{\gamma (\zeta)}{(\gamma(\beta(\zeta)))} f(\beta(\zeta)).$$ Then $T$ is not  unitary
on $L^2(B^*Y, d \sigma)$. Instead one has:

\begin{itemize}
\item (i) The unique positive $T$-invariant $L^1$ function is
given by  $\gamma.$ \item (ii)  $T$ is unitary relative to the
inner product $\langle \langle, \rangle \rangle$ on $B^* Y$
defined by the measure $d\nu = \gamma^{-2} d \sigma$. \item (iii)
When $\beta$ is ergodic, the   orthogonal projection $P$ onto the
invariant $L^2$-eigenvectors has the form $$ P(f) = \frac{\langle
\langle f, \gamma \rangle \rangle}{ \langle \langle \gamma, \gamma
\rangle \rangle} \gamma = \frac1{\vol(B^*Y)} [\int_{B^* Y} f
\gamma^{-1} d \sigma] \; \gamma = c \omega_{\Neu}(f) \; \gamma
$$
where $c$ is as in \eqref{c}.
\end{itemize}

The proof of Theorem \ref{mainb} then  runs along similar lines to
that of Theorem \ref{QE}  but adjusted to the fact that that $T$
is not unitary.

\section{\label{HAS} Hassell's scarring result for stadia}

This section is an exposition of Hassell's scarring result for the
Bunimovich stadium. We follow \cite{Has} and \cite{Z7}.

 A stadium is a domain $X = R \cup W  \subset
\R^2$ which is formed by a rectangle $R = [- \alpha, \alpha]_x
\times [- \beta, \beta]_y$ and where $W = W_{- \beta} \cup
W_{\beta}$ are half-discs of radius $\beta$ attached at either
end. We fix the height $\beta = \pi/2$ and let $\alpha =  t \beta$
with $t \in [1, 2]$. The resulting stadium is denoted $X_t$.

It has long been suspected that there exist exceptional sequences
of eigenfunctions of $X$ which have a singular concentration on
the set of ``bouncing ball" orbits of $R$. These are the vertical
orbits in the central rectangle that repeatedly bounce
orthogonally against the flat part of the boundary. The unit
tangent vectors to the orbits define an invariant Lagrangian
submanifold with boundary $\Lambda \subset S^* X$. It is easy to
construct approximate eigenfunctions which concentrate
microlocally on this  Lagrangian submanifold. Namely, let
$\chi(x)$ be a smooth cutoff supported in the central rectangle
and form $v_n = \chi(x) \sin n y$. Then for any
pseudo-differential operator $A$ properly supported in $X$,
$$\langle A v_n, v_n \rangle \to \int_{\Lambda} \sigma_A \chi d \nu$$
where $d\nu$ is the unique normalized invariant measure on
$\Lambda$.

Numerical studies suggested that there also existed genuine
eigenfunctions with the same limit. Recently, A. Hassell has
proved this to be correct for almost all stadia.

\begin{theo} \label{HH} The Laplacian on $X_t$ is not QUE for
almost every $t \in [1, 2].$ \end{theo}

We now sketch the proof and develop related ideas on quantum
unique ergodicty. The main idea is that the existence of the
scarring bouncing ball quasi-modes implies that either

\begin{itemize}

\item There exist actual modes with a similar scarring property,
or

\item The spectrum has exceptional clustering around the bouncing
ball quasi-eigenvalues $n^2$.

\end{itemize}

Hassell then proves that the second alternative cannot occur for
most stadia. We now explain the ideas in more detail.

We first recall that a quasi-mode $\{\psi_k\}$ is a sequence of
$L^2$-normalized functions which solve
$$||(\Delta - \mu_k^2) \psi_k||_{L^2} = O(1),  $$
for a sequence of quasi-eigenvalues $\mu_k^2$. By the spectral
theorem it follows that there must exist true eigenvalues in the
interval $[\mu_k^2 - K, \mu_k^2 + K]$ for some $K> 0$. Moreover,
if $\tilde{E}_{k, K}$ denotes the spectral projection for $\Delta$
corresponding to this interval, then
$$ ||\tilde{E}_{k, K} \psi_k - \psi_k||_{L^2} = O(K^{-1}). $$
To maintain consistency with (\ref{ELAMBDA}), i.e. with our use of
frequencies $\mu_k$ rather than energies $\mu_k^2$, we re-phrase
this in terms of the projection $E_{k, K}$ for $\sqrt{\Delta}$ in
the interval $[\sqrt{\mu_k^2 - K}, \sqrt{\mu_k^2 + K}]$. For fixed
$K$, this latter interval has width $\frac{K}{\mu_k}$.

 Given a quasimode
$\{\psi_k\}$, the  question arises of how many true eigenfunctions
it takes to build the quasi-mode up to a  small error.

\begin{defin}  \label{ESS}  We say that a quasimode $\{\psi_k \}$ of order $0$ with $||\psi_k||_{L^2} = 1$ has
 $n(k)$ essential frequencies if
\begin{equation} \label{PSIK} \psi_k = \sum_{j = 1}^{n(k) } c_{kj} \phi_j + \eta_k,\;\;\;
||\eta_k||_{L^2} = o(1). \end{equation}
\end{defin}
To be a quasi-mode of order zero, the frequencies $\lambda_j$ of
the $\phi_j$ must come from an interval  $[\mu_k -
\frac{K}{\mu_k}, \mu_k + \frac{K}{\mu_k} ]$. Hence the number of
essential frequencies is bounded above by the number $n(k) \leq
N(k, \frac{K}{k})$  of eigenvalues in the interval. Weyl's law for
$\sqrt{\Delta}$ allows considerable clustering and only gives
$N(k,  \frac{K}{k}) = o(k)$ in the case where periodic orbits have
measure zero.  For instance, the quasi-eigenvalue might be a true
eigenvalue with multiplicity saturating the Weyl bound.  But  a
typical interval has a uniformly bounded number of
$\Delta$-eigenvalues in dimension $2$ or equivalently a frequency
interval of with $O(\frac{1}{\mu_k})$ has a uniformly bounded
number of frequencies.  The dichotomy above reflects the dichotomy
as to whether exceptional clustering of eigenvalues occurs around
the quasi-eigenvalues $n^2$ of $\Delta$ or whether there is a
uniform bound on $N(k, \delta)$.

\begin{prop} \label{BOUNDED} If there exists a quasi-mode $\{\psi_k\}$ of order $0$
for $\Delta$ with the properties: \begin{itemize}

\item  (i)  $n(k)  \leq C,\; \forall \; k$;

\item (ii) $\langle A \psi_k, \psi_k \rangle \to \int_{S^*M}
\sigma_A d \mu$ where $d \mu \not= d\mu_L$.

\end{itemize}

Then $\Delta$ is not QUE.
\end{prop}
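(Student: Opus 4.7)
Proof by contradiction: assume $\Delta$ is QUE. The strategy is to exploit the bounded number of essential frequencies to derive the measure inequality $d\mu \le N\, d\mu_L$ on $S^*M$, which contradicts $d\mu \ne d\mu_L$ in the cases of interest.

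First, by hypothesis (i) and pigeonhole, pass to a subsequence along which $n(k) \equiv N$ is constant, and decompose
$$\psi_k = \sum_{l=1}^N c_{kl}\,\phi_{j(k,l)} + \eta_k,$$
with the $\phi_{j(k,l)}$ orthonormal eigenfunctions, $\eta_k$ orthogonal to their span, and $\|\eta_k\|_{L^2} = o(1)$, so $\sum_l |c_{kl}|^2 = 1 - o(1)$ and each $|c_{kl}| \le 1 + o(1)$.

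For any $A \in \Psi^0(M)$, Cauchy--Schwarz in $\mathbb{C}^N$ gives
$$\|A\psi_k\|_{L^2}^2 = \Bigl\|\sum_l c_{kl}\,A\phi_{j(k,l)}\Bigr\|^2 + o(1) \le N \sum_l |c_{kl}|^2 \,\langle A^*A\,\phi_{j(k,l)}, \phi_{j(k,l)}\rangle + o(1).$$
Under the QUE assumption applied to $A^*A \in \Psi^0(M)$, each diagonal matrix element tends to $\omega(A^*A) = \int_{S^*M} |\sigma_A|^2\, d\mu_L$ for every $l$, so $\limsup_k \|A\psi_k\|^2 \le N \int_{S^*M} |\sigma_A|^2\, d\mu_L$. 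Reading hypothesis (ii) as the statement that $\{\psi_k\}$ has quantum limit $\mu$ (so the asserted convergence holds for every observable), apply it to $A^*A$ to obtain $\|A\psi_k\|^2 = \langle A^*A\psi_k,\psi_k\rangle \to \int_{S^*M} |\sigma_A|^2\, d\mu$. Combining yields
$$\int_{S^*M} |\sigma_A|^2\, d\mu \;\le\; N \int_{S^*M} |\sigma_A|^2\, d\mu_L, \qquad \forall\, A \in \Psi^0(M).$$
Since every non-negative smooth function $f$ on $S^*M$ equals $|\sigma_A|^2$ for some $A \in \Psi^0(M)$ (choose the principal symbol of $A$ to be $\sqrt{f}$), this is equivalent to $d\mu \le N\, d\mu_L$ as Borel measures; in particular $d\mu$ is absolutely continuous with respect to $d\mu_L$ with Radon--Nikodym derivative bounded by $N$.

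In the intended application (Hassell's bouncing-ball quasi-modes of \S \ref{HAS}), $d\mu$ is supported on the $g^t$-invariant Lagrangian $\Lambda \subset S^*X$, a submanifold of positive codimension with zero Liouville measure, so $d\mu$ is singular with respect to $d\mu_L$ --- directly contradicting the absolute continuity just derived, whence $\Delta$ is not QUE. The main obstacle to a fully general statement is the case in which $d\mu$ itself is absolutely continuous yet $\ne d\mu_L$: the bound $d\mu \le Nd\mu_L$ is then consistent with $d\mu \ne d\mu_L$ (any $g^t$-invariant density $0 \le f \le N$ with $\int f\,d\mu_L = 1$ survives), so closing the gap requires sharpening the Cauchy--Schwarz step --- for instance, exploiting the small eigenvalue spacing $|\lambda_{j(k,l)} - \lambda_{j(k,l')}| = O(\mu_k^{-1})$ of the essential frequencies, or applying a basis-rotation argument within each eigenspace for those pairs $(l,l')$ for which $\phi_{j(k,l)}$ and $\phi_{j(k,l')}$ share an eigenvalue.
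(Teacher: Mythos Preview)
Your argument is nearly complete; the gap you acknowledge is not real. You have already shown $d\mu \le N\,d\mu_L$, hence $d\mu = f\,d\mu_L$ with $0 \le f \le N$ in $L^1(d\mu_L)$. The missing observation is that in the setting of the proposition the geodesic flow is assumed ergodic (this is explicit in the paper's key Lemma preceding the proof). The quasi-mode limit $d\mu$ is $g^t$-invariant: the essential frequencies $\lambda_{j(k,l)}$ all lie in the interval $[\sqrt{\mu_k-K},\sqrt{\mu_k+K}]$ of width $O(\mu_k^{-1/2})\to 0$, so $U^t\psi_k - e^{it\sqrt{\mu_k}}\psi_k \to 0$ and hence $\langle U^{-t}AU^t\psi_k,\psi_k\rangle - \langle A\psi_k,\psi_k\rangle \to 0$. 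Thus $f$ is a $g^t$-invariant $L^1$ density, ergodicity forces $f$ constant a.e., and since $d\mu$ and $d\mu_L$ are both probability measures, $f\equiv 1$. This is exactly the contradiction. Your sentence ``any $g^t$-invariant density $0\le f\le N$ \dots\ survives'' is where you should have invoked ergodicity: no non-constant invariant $L^1$ density exists.

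The paper's route is somewhat different but uses the same ergodicity step. Instead of bounding $\|A\psi_k\|^2$, it expands $\langle A\psi_k,\psi_k\rangle = \sum_{l,m} c_{kl}\bar c_{km}\langle A\phi_{j(k,l)},\phi_{j(k,m)}\rangle + o(1)$ into diagonal and off-diagonal pieces, and proves a separate lemma: under QUE and ergodicity, the off-diagonal Wigner distributions $dW_{i,j}$ with $\lambda_i-\lambda_j\to 0$ tend weakly to $0$. The proof of that lemma is essentially your Cauchy--Schwarz: any weak* limit $d\nu$ of $dW_{i,j}$ satisfies $|\int|\sigma_A|^2\,d\nu|\le \int|\sigma_A|^2\,d\mu_L$, hence $d\nu\ll d\mu_L$, hence $d\nu = C\,d\mu_L$ by ergodicity, hence $C=0$ by orthogonality (take $A=I$). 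With the off-diagonal terms gone, the finitely many diagonal terms each converge to $\omega(A)$ by QUE, giving $\langle A\psi_k,\psi_k\rangle\to\omega(A)$ directly. Your approach is more economical --- one Cauchy--Schwarz and one ergodicity appeal applied to $d\mu$ itself --- while the paper's off-diagonal lemma is of independent interest and is reused later (in \S\ref{QUEMULT}).
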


The proof is based on the following lemma pertaining to near
off-diagonal Wigner distributions. It gives an ``everywhere''
version of the off-diagonal part of Theorem \ref{QE} (2).

   \begin{lem}  \label{PONEa} Suppose that $g^t$ is ergodic and $\Delta$ is QUE. Suppose that
 $\{(\lambda_{i_r}, \lambda_{j_r}), \; i_r \not= j_r\}$ is a sequence of pairs
 of eigenvalues of $\sqrt{\Delta}$ such that
 $\lambda_{i_r} - \lambda_{j_r} \to 0$ as $r \to \infty$.  Then $d W_{i_r, j_r} \to 0$.
 \end{lem}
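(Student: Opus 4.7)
The plan is to argue by contradiction via Proposition~\ref{BOUNDED}, using as scarring quasi-modes the rotations inside the two-dimensional span of $(\phi_{i_r},\phi_{j_r})$. Suppose the conclusion fails: for some $A\in\Psi^0(M)$ the matrix elements $\beta_r:=\langle A\phi_{i_r},\phi_{j_r}\rangle$ do not tend to zero. Passing to a subsequence I may assume $\beta_r\to c\neq 0$; by considering real and imaginary parts separately I may take $A$ self-adjoint, and by replacing $A$ with $A-\omega(A)I$ (which only affects the diagonal since $i_r\neq j_r$) I may assume $\omega(A)=0$. QUE then forces both $\langle A\phi_{i_r},\phi_{i_r}\rangle\to 0$ and $\langle A\phi_{j_r},\phi_{j_r}\rangle\to 0$.

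Next I choose the relative phase $\alpha_r=-\arg\beta_r$ so that $e^{i\alpha_r}\beta_r=|\beta_r|\to|c|$, and I form the unit vector
$$\psi_r=\tfrac{1}{\sqrt 2}\bigl(\phi_{i_r}+e^{i\alpha_r}\phi_{j_r}\bigr).$$
The identity
$$\langle A\psi_r,\psi_r\rangle=\tfrac12\bigl(\langle A\phi_{i_r},\phi_{i_r}\rangle+\langle A\phi_{j_r},\phi_{j_r}\rangle\bigr)+\Re\bigl(e^{i\alpha_r}\beta_r\bigr)\longrightarrow|c|$$
then shows this matrix element has a limit distinct from $\omega(A)=0$. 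Passing to a further subsequence on which $\langle B\psi_r,\psi_r\rangle$ converges for every $B$ in a countable dense subfamily of $\Psi^0(M)$ produces a probability measure $\mu_\psi$ on $S^*M$. Its $g^t$-invariance follows from Egorov together with the vanishing frequency spread: the relative phase $e^{\pm it(\lambda_{i_r}-\lambda_{j_r})}$ appearing in the analogous expansion of $\langle\alpha_t(B)\psi_r,\psi_r\rangle$ tends to $1$ for each fixed $t$, so $\lim_r\langle\alpha_t(B)\psi_r,\psi_r\rangle=\lim_r\langle B\psi_r,\psi_r\rangle$. By construction $\int\sigma_A\,d\mu_\psi=|c|\neq 0=\int\sigma_A\,d\mu_L$, and hence $\mu_\psi\neq\mu_L$.

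The sequence $\{\psi_r\}$ has exactly $n(r)=2$ essential frequencies (with vanishing remainder in Definition~\ref{ESS}), and the approximate eigenvalue equation
$$\bigl(\sqrt\Delta-\nu_r\bigr)\psi_r=\tfrac{\lambda_{i_r}-\lambda_{j_r}}{2\sqrt 2}\bigl(\phi_{i_r}-e^{i\alpha_r}\phi_{j_r}\bigr),\qquad \nu_r=\tfrac{\lambda_{i_r}+\lambda_{j_r}}{2},$$
has $L^2$-norm $\tfrac12|\lambda_{i_r}-\lambda_{j_r}|\to 0$. Hence $\psi_r$ is a bona fide quasi-mode for $\sqrt\Delta$ concentrating spectrally on an arbitrarily narrow window around $\nu_r$. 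Invoking Proposition~\ref{BOUNDED} (in its $\sqrt\Delta$ form) produces a contradiction with QUE. Therefore $\beta_r\to 0$, and since $A\in\Psi^0(M)$ was arbitrary this is precisely $dW_{i_rj_r}\to 0$ in $\mathcal{D}'(S^*M)$.

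The main obstacle is matching $\psi_r$ to the hypotheses of Proposition~\ref{BOUNDED} as stated for $\Delta$: the norm $\|(\Delta-\nu_r^2)\psi_r\|\sim\lambda_r|\lambda_{i_r}-\lambda_{j_r}|$ is $O(1)$ only under the strictly sharper spacing $|\lambda_{i_r}-\lambda_{j_r}|=O(\lambda_r^{-1})$, whereas the hypothesis gives only $o(1)$. I would address this either by reformulating Proposition~\ref{BOUNDED} for $\sqrt\Delta$ (where our spacing is exactly what is needed), or by observing that its proof relies only on the exact essential-frequency decomposition together with the $g^t$-invariance of the quantum limit, both of which hold here because $\psi_r$ is literally (not approximately) a two-term combination of eigenfunctions.
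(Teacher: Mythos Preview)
Your argument is circular. You invoke Proposition~\ref{BOUNDED} to derive a contradiction from the existence of the two-term quasi-mode $\psi_r$, but look at how the paper proves Proposition~\ref{BOUNDED}: it assumes QUE, expands $\langle A\psi_k,\psi_k\rangle$ into diagonal and off-diagonal pieces, and then kills the off-diagonal pieces \emph{by applying Lemma~\ref{PONEa}}. In your situation the expansion reads
\[
\langle A\psi_r,\psi_r\rangle=\tfrac12\langle A\phi_{i_r},\phi_{i_r}\rangle+\tfrac12\langle A\phi_{j_r},\phi_{j_r}\rangle+\Re\bigl(e^{i\alpha_r}\langle A\phi_{i_r},\phi_{j_r}\rangle\bigr),
\]
and the only mechanism by which QUE forces the right-hand side to $\omega(A)$ is precisely the assertion $\langle A\phi_{i_r},\phi_{j_r}\rangle\to 0$ that you are trying to prove. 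Your final paragraph anticipates having to unpack Proposition~\ref{BOUNDED}, but once you do, you land exactly back on the off-diagonal vanishing; there is no independent content left. Establishing that $\mu_\psi$ is $g^t$-invariant does not help here, since QUE as stated constrains only limits of eigenfunctions, not of their superpositions.

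The paper's proof is direct and avoids this loop. One takes any weak* limit $d\nu$ of $dW_{i_r,j_r}$; invariance under $g^t$ follows from Egorov and the vanishing gap. The key step is the Cauchy--Schwarz inequality
\[
|\langle A^*A\phi_{i},\phi_{j}\rangle|\le \langle A^*A\phi_{i},\phi_{i}\rangle^{1/2}\langle A^*A\phi_{j},\phi_{j}\rangle^{1/2},
\]
which after passing to the limit (using QUE on each diagonal factor) yields $\bigl|\int|\sigma_A|^2\,d\nu\bigr|\le\int|\sigma_A|^2\,d\mu_L$. Hence $d\nu\ll d\mu_L$, and since $d\nu$ is invariant and $\mu_L$ is ergodic, $d\nu=C\,d\mu_L$ for some constant $C$. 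Finally, taking $A=I$ and using $\langle\phi_i,\phi_j\rangle=0$ forces $C=0$. This argument uses QUE only on the diagonal and ergodicity to pin down the constant; nothing circular enters.
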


 \begin{proof}  Let $\{\lambda_i, \lambda_j\}$ be any
 sequence of pairs with the gap $\lambda_i - \lambda_j \to 0$.
 Then by Egorov's theorem,
  any weak* limit $d \nu $ of the sequence $\{d W_{i,j} \}$ is a measure invariant under the geodesic
 flow. The weak limit is defined by the property that
 \begin{equation} \label{QUE}   \langle A^*A \phi_i, \phi_j \rangle  \to \int_{S^*M}
 |\sigma_A|^2 d \nu. \end{equation}
 If the eigenfunctions are real, then $d \nu$ is a real (signed)
 measure.

 We now observe that  any such weak* limit must be a
 constant multiple of Liouville measure $d\mu_L$. Indeed, we first
 have:
 \begin{equation}| \langle A^*A \phi_i, \phi_j \rangle | \leq | \langle A^*A \phi_i, \phi_i \rangle
 |^{1/2} \; | \langle A^*A \phi_j, \phi_j \rangle |^{1/2}.
 \end{equation}
 Taking the limit along the sequence of pairs, we obtain
 \begin{equation} |\int_{S^*M}
 |\sigma_A|^2 d \nu| \leq \int_{S^*M} |\sigma_A|^2 d \mu_L.
 \end{equation}
 It follows that $d \nu << d \mu_L$ (absolutely continuous). But $d\mu_L$
 is an ergodic measure, so if $d \nu = f d \mu_L$ is an invariant measure  with $f \in L^1 (d \mu_L)$, then $f $ is constant.
 Thus,
 \begin{equation} \label{C} d \nu = C d \mu_L, \;\;\; \mbox{for some constant}\; C.  \end{equation}

 We now observe  that $C = 0$ if $\phi_i \bot \phi_j$ (i.e. if $i \not= j)$.  This follows if we substitute $A =
 I$ in (\ref{QUE}), use orthogonality and (\ref{C}).

\end{proof}

We now complete the proof of the Proposition by arguing by
contradiction. The frequencies must come from a shrinking
frequency interval, so the hypothesis of the Proposition is
satisfied. If $\Delta$ were QUE, we would have (in the notation of
(\ref{PSIK}):
$$\begin{array}{ll}\langle A \psi_k, \psi_k \rangle & = \sum_{i, j = 1}^{n(k)} c_{kj} c_{k i}\langle A
\phi_i, \phi_j \rangle + o(1) \\ & \\&  = \sum_{j = 1}^{n(k)}
c_{kj}^2 \langle A \phi_j, \phi_j \rangle + \sum_{i\not= j =
1}^{n(k)} c_{kj} c_{k i}\langle A \phi_i, \phi_j
\rangle + o(1)\\ & \\
& = \int_{S^*M} \sigma_A d\mu_L + o(1),
\end{array} $$
by Proposition \ref{PONEa}. This contradicts (ii). In the last
line we used $\sum_{j = 1}^{n(k)} |c_{kj}|^2  = 1 + o(1)$, since
$||\psi_k||_{L^2} = 1$.

\medskip

QED

\medskip

\subsection{Proof of Hassell's scarring result}

We apply and develop this reasoning in the case of the stadium.
The quasi-eigenvalues of the Bunimovich stadium corresponding to
bouncing ball quasi-modes are $n^2$ independently of the diameter
$t$ of the inner rectangle.

By the above, it suffices to show that  that there exists a
sequence $n_j \to \infty$ and a constant $M$ (independent of $j$)
so that there exist $\leq M$ eigenvalues of $\Delta$ in $[n_j^2 -
K, n_j^2 + K]$. An somewhat different argument is given in
\cite{Has} in this case: For each $n_j$ there exists a normalized
eigenfunction $u_{k_j}$ so that $\langle u_{k_j}, v_{k_j} \rangle
\geq \sqrt{\frac{3}{4} M}. $ It suffices to choose the
eigenfunction with eigenvalue in the interval with the largest
component in the direction of $v_{k_j}$. There exists one since
$$||\tilde{E}_{[n^2 - K, n^2 + K } v_n || \geq \frac{3}{4}. $$
The sequence $\{u_{n_k}\}$ cannot be Liouville distributed.
Indeed, for any $\epsilon > 0$, let $A$ be a self-adjoint
semi-classical pseudo-differential operator properly supported in
the rectangle so that $\sigma_A \leq 1$ and so that $||(Id -
A)v_n|| \leq \epsilon$ for large enough $n$. Then
$$\begin{array}{lll} \langle A^2 u_{k_j}, u_{k_j} \rangle = ||A u_{k_j}||^2  & \geq &
\left| \langle A u_{k_j}, v_{k_j} \rangle \right|^2  \\ && \\
& = & \left| \langle  u_{k_j}, A v_{k_j} \rangle \right|^2 \geq
\left(|\langle u_{k_j}, v_{k_j} \rangle | - \epsilon \right)^2
\geq \left( \sqrt{\frac{3}{4} M} - \epsilon \right)^2.
\end{array} $$
Choose a sequence of operators $A$ such that $||(Id - A) v_n|| \to
0$ and so that the support of $\sigma_A$ shrinks to the set of
bouncing ball covectors. Then the mass of any quantum limit of
$\{u_{n_k}\}$ must have mass $\geq \frac{3}{4}M$ on $\Lambda$.

Thus, the main point is to eliminate the possibility of
exceptional clustering of eigenvalues around the
quasi-eigenvalues. In fact, no reason is known why no exceptional
clustering should occur. Hassell's idea is that it can however
only occur for a measure zero set of diameters of the inner
rectangle. The proof is based on Hadamard's variational formula
for the variation of Dirichlet or Neumann  eigenvalues under a
variation of a domain. In the case at hand, the stadium is varied
by horizontally (but not vertically) expanding the inner
rectangle. In the simplest case of Dirichlet boundary conditions,
the eigenvalues are forced to decrease as the rectangle is
expanded. The QUE hypothesis forces them to decrease at a uniform
rate. But then they can only rarely cluster at the fixed
quasi-eigenvalues $n^2$. If this ever happened, the cluster would
move left of $n^2$ and there would not be time for a new cluster
to arrive.

Here is a more detailed sketch. Under the variation of $X_t$ with
infinitesimal variation vector field $\rho_t$,  Hadamard's
variational
 formula gives,
$$\frac{d E_j(t)}{dt} = \int_{\partial X_t} \rho_t(s) (\partial_n
u_j(t)(s))^2 ds. $$ Then
$$E_j^{-1} \frac{d}{dt} E_j(t) = - \int_{\partial X_t} \rho_t(s)
u^b_j(s)^2 ds. $$

Let $A(t) $ be the area of $S_t$. By Weyl's law, $E_j(t) \sim c
\frac{j}{A(t)}. $ Since the area of $X_t$ grows linearly, we have
on average  $\dot{E}_j \sim - C \frac{E_j}{A(t)}. $ Theorem
\ref{mainb} gives the asymptotics individually for almost all
eigenvalues.  Let
$$f_j(t) = \int_{\partial X_t} \rho_t(s) |u^b_j(t; s)|^2 ds. $$
Then $\dot{E}_j = - E_j f_j$. Then Theorem \ref{mainb}  implies
that $|u^b_j|^2 \to \frac{1}{A(t)}$ weakly on the boundary along a
subsequence of density one. QUE is the hypothesis that this occurs
for the entire sequence, i.e. $$f_j(t) \to \frac{k}{A(t)} > 0,
\;\;\; k:= \int_{\partial S_t} \rho_t(s) ds.$$  Hence,
$$\frac{\dot{E}_j}{E}  = - k A(t) (1 + o(1)), \;\; j \to \infty.
$$
Hence there is a lower bound to the velocity with which
eigenvalues decrease as $A(t)$ increases. Eigenvalues can
therefore not concentrate in the fixed quasi-mode intervals $[n^2
- K, n^2 + K]$ for all $t$. But then there are only a bounded
number of eigenvalues in this interval; so  Proposition
\ref{BOUNDED} implies QUE for the other $X_t$. A more detailed
analysis shows that QUE holds for almost all $t$.

\section{\label{FIOME} Matrix elements of Fourier integral operators}

Our main focus in this survey  goal is on the limits of diagonal
matrix elements (\ref{RHOJ}) of  pseudodifferential
 operators $A$ or order zero  relative to an orthonormal basis $\{\phi_j\}$ of
 eigenfunctions. Difficult as it is, the limiting behavior of such
 matrix elements is one of the more accessible properties of
 eigenfunctions. To obtain more information, it would be useful to
 expand the class of operators $A$ for which one can study matrix
 elements. In \S \ref{HYP}, we were essentially expanding the
 class from classical polyhomogeneous pseudo-differential
 operators to those in the ``small-scale" calculus, i.e. supported
 in $h^{\delta}$ tubes around closed geodesics. Another way to
 expand the class of $A$ is to consider matrix elements of Fourier integral
 operators. Motivation to consider matrix elements  of Fourier
 integral operators comes partly from the fact that Hecke
 operators are Fourier integral operators, and this section is a preparation
 for the next section on Hecke operators \S \ref{HECKE}. Other examples arise in
 quantum ergodic restriction theorems. Details on the claims to
 follow will be given in a forthcoming paper.

  By an FIO (Fourier
 integral operator) we mean an operator whose
 Schwartz kernel of $F$ may be locally represented as a finite
sum of oscillatory integrals,
$$K_F(x,y) \sim \int_{\R^N} e^{i \phi(x, y, \theta)} a(x, y,
\theta) d\theta$$ for some homogeneous phase $\phi$ and amplitude
$a$.  The only example we have seen so far in this survey is the
wave group $U^t$ and of course pseudo-differential operators.

The phase is said to generate  the  canonical relation
$$C = \{(x, \phi'_x, y, - \phi'_y):  \phi'_{\xi}(x, y, \theta) =
0\} \subset T^*M \times T^*M. $$ The oscillatory integral also
determines the principal $\sigma_F$ of $F$, a $1/2$ density along
$C$. The data $(C, \sigma_F)$ determines $F$ up to a compact
operator. In the case of the wave group, $C$ is the graph of $g^t$
and the symbol is the canonical volume half-density on the graph.
In the case of $\psi$DO's, the canonical relation is the diagonal
(i.e. the graph of the identity map) and the symbol is the one
defined above. We denote by $I^0(M, \times M, C)$ the class of
Fourier integral operators of order zero and canonical relation
$C$.

Hecke operators are also FIO's of a simple kind. As discussed in
the next section, in that case $C$ is a local canonical graph,
i.e. the projections
$$\pi_X: C \to T^* M, \;\;\; \pi_Y: C \to T^*M $$
are branched covering maps. Equivalently,  $C$ is the graph of a
finitely many to one  correspondence $\chi: T^*M \to T^* M$.
Simpler examples of the same type include (i) $F = T_g$ is
translation by an isometry of a Riemannian manifold $(M, g)$
possessing an isometry, or
 (ii)  $T f(x) = \sum_{j = 1}^k f(g_j x) +
f(g_j^{-1} x) $ on $S^n$ corresponding to a finite set $\{g_1,
\dots, g_k\}$ of isometries. The latter has been studied by
Lubotzky-Phillips-Sarnak.

\subsection{Matrix elements as linear functionals}

 The matrix elements
\begin{equation} \rho_j(F): = \langle F \phi_j, \phi_j \rangle \end{equation}  define  continuous linear
functionals \begin{equation} \rho_j:  I^0(M \times M, C) \to \C
\end{equation} with respect to the operator norm. It is simple to see that any
limit $\rho_{\infty}$ of the sequence of functionals $\rho_j$ are
functionals only of the principal symbol data $(C, \sigma_F)$
which is bounded by the supremum of $\sigma_F$. Thus, as in the
pseudo-differential case, $\rho_{\infty}$ is a measure on the
space $\Omega^{1/2}_C$ of continuous half-densities on $C$.

The limiting behavior of $\rho_j(F)$ depends greatly on whether
$[F, \sqrt{\Delta}] = 0$ (or is of negative order) or whether it
has the same order as $F$. In effect, this is the issue of whether
the canonical relation $C$ underlying $F$ is invariant under the
geodesic flow or not. For instance, if $C$ is the graph of a
canonical transformation $\chi$ and if $\chi$ moves the energy
surface $S^*_g M = \{(x, \xi): |\xi|_g = 1\}$ to a new surface
$\chi(S^*_g M)$  disjoint from $S^*_g M$ then $\langle F \phi_j,
\phi_j \rangle \to 0$. On the other hand, $\langle A e^{i t
\sqrt{\Delta}} \phi_j, \phi_j \rangle = e^{it \lambda_j} \langle A
\phi_j, \phi_j \rangle$ so one gets the same quantum limits for $F
= A e^{i t \sqrt{\Delta}}$ as for $A$ but must pick sparser
subsequences to get the limit.

Hecke operators have the property that $[F, \Delta] = 0.$ This
implies that the canonical relation $C$ is invariant under $g^t$.
In general this is the only condition we need to study weak
limits. The eigenfunction linear functionals $\rho_k$   are
invariant in the the sense that $\rho_k(U^{-t}F U^t) = \rho_k(F)$.

\begin{prop}\label{ONE} Let $C \subset T^*M \times T^*M$ be a local canonical graph, equipped with the pull-back of the
symplectic volume measure on $T^* M$. Assume $C$ is invariant
under $g^t$.  Let $F \in I^0(M \times M, C)$ and identify its
symbol with a scalar function relative to the graph half-density.
Let $\rho_{\infty}$ be a weak* limit of the functionals $\rho_j$
on $I^0(M \times N, C)$. Then there exists a complex measure $\nu$
on $SC = C \cap S^* M \times S^* M$ of mass $\leq 1$ such that
$$\rho_{\infty}(F) =  \int_{SC} \sigma_F d\nu, $$
which satisfies $g^t_* d\nu = d\nu. $
\end{prop}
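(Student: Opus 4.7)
The plan is to mimic the pseudo-differential argument for quantum limits in three steps. First, show that any weak-$*$ limit $\rho_\infty$ of the sequence $\{\rho_j\}$ descends to a continuous linear functional on $C(SC)$ that depends only on the restriction of the principal symbol $\sigma_F$ to $SC$. Second, invoke Riesz representation to produce the complex measure $\nu$. Third, extract the $g^t$-invariance of $\nu$ from Egorov's theorem for Fourier integral operators applied to the tautological identity $\rho_j(U^{-t} F U^t) = \rho_j(F)$.

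For the first step I would use two standard facts about $I^0(M\times M, C)$ when $C$ is a local canonical graph: the $L^2$-boundedness estimate $\|F\|_{L^2\to L^2} \leq \|\sigma_F\|_{L^\infty(SC)}$ holds modulo compact corrections, and FIOs of strictly negative order in this class are compact. Since $\{\phi_j\}$ is orthonormal it converges weakly to zero in $L^2(M)$, so compact operators have matrix elements tending to zero, and hence $\rho_\infty(F)$ depends only on the principal symbol. Standard elliptic regularity for $\Delta - \lambda_j^2$ further ensures that $\phi_j$ is microlocalized on $S^*M$ up to $O(\lambda_j^{-\infty})$ errors, so only the restriction $\sigma_F|_{SC}$ contributes. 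After the graph half-density is used to identify symbols with scalar functions on $C$, one obtains a continuous linear functional on $C(SC)$ of norm at most $1$.

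Riesz representation then yields a unique complex Radon measure $\nu$ on $SC$ with $|\nu|(SC)\le 1$ and $\rho_\infty(F) = \int_{SC} \sigma_F \, d\nu$ for every $F \in I^0(M\times M, C)$. For invariance, the relation $U^t \phi_j = e^{it\lambda_j} \phi_j$ gives
\[
\rho_j(U^{-t} F U^t) \;=\; \langle F U^t \phi_j,\, U^t \phi_j\rangle \;=\; \rho_j(F),
\]
which passes to the weak-$*$ limit as $\rho_\infty(U^{-t} F U^t) = \rho_\infty(F)$. Egorov's theorem for FIOs says that $U^{-t} F U^t \in I^0(M\times M, C)$ -- the assumed $g^t$-invariance of $C$ is exactly what makes the composed canonical relation $(g^{-t}\times g^{-t})(C)$ equal $C$ again -- with principal symbol $\sigma_F \circ G^t$, where $G^t := g^t \times g^t$ is the diagonal action on $C$. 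Substituting into the invariance of $\rho_\infty$ gives
\[
\int_{SC} \sigma_F \circ G^t \, d\nu \;=\; \int_{SC} \sigma_F \, d\nu
\]
for all admissible symbols, and since such symbols are dense in $C(SC)$, this forces $(G^t)_*\nu = \nu$, i.e.\ $g^t_* d\nu = d\nu$.

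The main obstacle is the Egorov step. Cleanness of the triple composition $U^{-t} \circ F \circ U^t$ is automatic because $U^{\pm t}$ is a homogeneous FIO associated to the graph of the diffeomorphism $g^t$ of $T^*M\setminus 0$, so composition with it is always transversal and the resulting canonical relation is the set-theoretic image of $C$ under $(g^{-t}\times g^{-t})$; the invariance of $C$ returns this to $C$. Tracking the principal symbol through the composition -- a bookkeeping exercise involving the Maslov line bundle and the graph half-density, made easier by the fact that $U^t$ has trivial symbol along its canonical graph -- produces the pullback $\sigma_F \circ G^t$. Once this is in hand everything else is formal; notably, no dynamical hypothesis on $g^t$ beyond the invariance of $C$ is needed.
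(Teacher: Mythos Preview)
Your proposal is correct and follows essentially the approach the paper has in mind: the paper's entire proof is the single sentence ``The proof is similar to that in the $\psi$DO case,'' and your three steps (reduction to the principal symbol on $SC$ via $L^2$-boundedness modulo compacts, Riesz representation, and Egorov for the conjugation $U^{-t}FU^t$ using the $g^t$-invariance of $C$) are exactly the adaptation of that pseudo-differential argument to the FIO setting. Your discussion of the Egorov step, in particular the observation that composition with $U^{\pm t}$ is automatically clean because its canonical relation is the graph of a diffeomorphism, fills in more detail than the paper itself provides.
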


The proof is similar to that in  the $\psi$DO case.

We note that $I^0(M \times M, C)$ is a right and left module over
$\Psi^0(M)$. Given $F \in I^0(M \times M, C)$ we consider the
operators $AF, FA \in I^0(M \times M, C)$ where $A \in \Psi^0(X)$.
We obtain  a useful improvement on Proposition \ref{ONE}.

\begin{prop} \label{TWO} With the same hypotheses as in Proposition \ref{ONE}. Assume further
that $F$ is self-adjoint and that $\rho_j(AF) \sim \rho_j(FA)$.
Then we have:
$$\rho_{\infty}(\pi_X^* a \sigma) = \rho_{\infty}(\pi_Y^* a
\sigma). $$
\end{prop}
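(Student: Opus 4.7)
The plan is to reduce the claim directly to Proposition \ref{ONE} via the composition formula for symbols of products $\psi\mathrm{DO} \cdot \mathrm{FIO}$. First, I would observe that because $C$ is a local canonical graph, the composition of the diagonal canonical relation $\Delta_{T^*M\setminus 0}$ (underlying any $A \in \Psi^0(M)$) with $C$ is automatically clean and produces $C$ again; hence $AF, FA \in I^0(M\times M, C)$ for every $A \in \Psi^0(M)$. With $\sigma_F$ identified with a scalar function on $C$ via the graph half-density as in the statement of Proposition \ref{ONE}, the standard symbol calculus gives
\begin{equation*}
\sigma_{AF} = (\pi_X^* \sigma_A)\,\sigma_F, \qquad \sigma_{FA} = (\pi_Y^* \sigma_A)\,\sigma_F,
\end{equation*}
where $\pi_X, \pi_Y: C \to T^*M$ are the two projections. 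The formula records that precomposition on the left by $A$ multiplies the symbol on $C$ by $\sigma_A$ pulled back via the source projection, while postcomposition on the right multiplies by the pullback under the target projection.

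Next, I would apply Proposition \ref{ONE} to the fixed weak-$*$ limit $\rho_\infty$ evaluated on $AF$ and $FA$. Let $\nu$ denote the $g^t$-invariant complex measure on $SC$ associated to $\rho_\infty$, and write $a = \sigma_A$, $\sigma = \sigma_F$. Since both $AF$ and $FA$ belong to $I^0(M\times M, C)$ and their underlying canonical relation $C$ is $g^t$-invariant by hypothesis, Proposition \ref{ONE} yields
\begin{equation*}
\rho_\infty(AF) = \int_{SC}(\pi_X^* a)\,\sigma\, d\nu, \qquad \rho_\infty(FA) = \int_{SC}(\pi_Y^* a)\,\sigma\, d\nu.
\end{equation*}
Because $\rho_\infty$ depends only on the principal symbol, the right-hand sides can be rewritten as $\rho_\infty(\pi_X^* a\,\sigma)$ and $\rho_\infty(\pi_Y^* a\,\sigma)$ respectively, interpreted in the sense of Proposition \ref{ONE}.

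The substantive hypothesis $\rho_j(AF) \sim \rho_j(FA)$, read as $\rho_j(AF) - \rho_j(FA) \to 0$ along the subsequence defining $\rho_\infty$, then forces $\rho_\infty(AF) = \rho_\infty(FA)$, and the two displays above yield the conclusion $\rho_\infty(\pi_X^* a\,\sigma) = \rho_\infty(\pi_Y^* a\,\sigma)$. The self-adjointness of $F$ enters in two ways: it ensures that $C = C^t$ (so that $\pi_X$ and $\pi_Y$ play symmetric roles) and that $\sigma_F$ can be taken real, and it is what makes the hypothesis on $\rho_j$ natural — for instance, it holds automatically when $[F,\Delta]=0$ and the $\phi_j$ are joint real eigenfunctions, as in the Hecke setting $F\phi_j = \lambda_j^F \phi_j$ with $\lambda_j^F \in \R$. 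The one technical point to verify carefully is the bookkeeping of the half-density factors in the symbol composition, which I expect to be the main potential obstacle; however, the local canonical graph hypothesis makes $\pi_X$ and $\pi_Y$ local diffeomorphisms, so no Jacobian factor intervenes and the composition formula takes the clean product form above. Granting this, the result is an immediate corollary of Proposition \ref{ONE}.
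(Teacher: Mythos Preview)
Your argument is correct, and it is actually more direct than the route taken in the paper. You use the hypothesis $\rho_j(AF)\sim\rho_j(FA)$ head-on: compute $\sigma_{AF}=(\pi_X^*a)\sigma$ and $\sigma_{FA}=(\pi_Y^*a)\sigma$ via the clean composition calculus for a local canonical graph, pass to the weak-$*$ limit using Proposition~\ref{ONE}, and subtract.

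The paper proceeds differently. It does not invoke the asymptotic commutativity hypothesis in the displayed chain at all; instead it uses self-adjointness of $F$ through the identity
\[
\rho_j(AF)=\langle AF\phi_j,\phi_j\rangle=\overline{\langle F^*A^*\phi_j,\phi_j\rangle}=\overline{\langle FA^*\phi_j,\phi_j\rangle}=\overline{\rho_j(FA^*)},
\]
and then reads off the symbol of $FA^*$ as $(\pi_Y^*\overline{a})\sigma$, conjugating back via $\overline{\sigma_F}=\sigma_F$. So the paper's mechanism is complex conjugation plus $F=F^*$, while yours is the commutation hypothesis plus the symbol formulas for $AF$ versus $FA$. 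Your approach has the virtue that it transparently isolates which hypothesis is doing the work, and it avoids the (unstated) issue in the paper's sketch of why $\overline{\int f\,d\nu}=\int\overline{f}\,d\nu$ for the limit measure. The paper's approach, on the other hand, shows that self-adjointness alone (together with reality of $\nu$) already forces the $\pi_X/\pi_Y$ symmetry, which is informative in its own right. Either route suffices here; your bookkeeping on the half-densities is fine because $C$ is a local canonical graph, so the graph half-density trivializes the composition.
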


This holds since
$$\rho_{\infty}(\pi_X^* a \sigma) \sim \rho_j(A F) = \langle A F \phi_j, \phi_j \rangle = \overline{\langle F^*
A^* \phi_j, \phi_j \rangle} = \overline{\langle F A^* \phi_j,
\phi_j \rangle} \sim \pi_Y^* a \sigma $$ since $\sigma_{A^*} \sim
\overline{\sigma_A}$ and since $\overline{\sigma_F} = \sigma_F.$

Let us consider some simple examples. First, suppose that $F = T_g
+ T_{g}^*$ where $g$ is an isometry. Then $C$ is the union of the
graph of the lift of $g$ to $T^*M$ (by its derivative) and the
inverse graph. The above Proposition then says that $g_* \nu =
\nu$. This is obvious by Egorov's theorem, and we can regard the
Proposition as a generalization of Egorov's theorem to symplectic
correspondences.  As a second example, let $F = U^t$. Then
$\langle A F \phi_j, \phi_j \rangle = e^{i t \lambda_j} \langle A
\phi_j, \phi_j \rangle$, so we see that sequences with unique
quantum limits are sparser than in the pseudo-differential case.

\section{\label{HECKE} QUE of Hecke eigenfunctions}

Our next topic is Hecke eigenfunctions. A detailed treatment would
have to involve adelic dynamics, higher rank measure rigidity in
the presence of positive entropy \cite{LIND,LIND2} and L-functions
\cite{HS,Sound1}. Both areas are outside the scope of this survey.
Fortunately, Lindenstrauss has written an expository article on
adelic dynamics and higher rank rigidity for an earlier Current
Developments in Mathematics \cite{LIND2}, and Soundararajan has
recently lectured on his L-function results in a 2009 Clay
lecture. Sarnak has recently written  exposition of the new
results of Soundararajan and  Holowinsky-Soundararajan results is
\cite{Sar3}.

So we continue in the same vein of explaining the microlocal
(phase space) features of the Hecke eigenvalue problem. The
distinguishing feature of Hecke eigenfunctions is that they have a
special type of symmetry. As a result, their quantum limit
measures  have a special type of symmetry additional to geodesic
flow invariance. The analysis of these additional symmetries is  a
key input into Lindenstrauss' QUE result.  The main purpose of
this section is to derive the exact symmetry. In fact, it appears
that the exact  symmetry was not  previously determined;   only a
quasi-invariance condition of \cite{RS} has been employed.

We begin by recalling the definition of a Hecke operator. Let
$\Gamma$ be a co-compact or cofinite discrete subgroup of $G =
PSL(2, \R)$ and let  $\X$ be the corresponding compact (or finite
area) hyperbolic surface. An element $g \in G, g \notin \Gamma$ is
said to be in the commensurator $Comm(\Gamma)$ if
$$\Gamma'(g) : = \Gamma \cap g^{-1} \Gamma g$$
is of finite index in $\Gamma$ and $g^{-1} \Gamma g$. More
precisely,
$$\Gamma = \bigcup_{j = 1}^d \Gamma'(g) \gamma_j, \;\;
(\mbox{disjoint}), $$ or equivalently
$$\Gamma g \Gamma = \bigcup_{j = 1}^d \Gamma \alpha_j, \;\; \mbox{where}\;\;\alpha_j = g \gamma_j. $$
It is also possible to choose $\alpha_j$ so that $\Gamma g \Gamma
=   \bigcup_{j = 1}^d \alpha_j \Gamma. $ We then have a diagram of
finite (non-Galois) covers:

\begin{equation}\label{DIAGRAM1} \begin{array}{ccccc} & & \Gamma'(g) \backslash \H & &  \\ & & & & \\
& \pi \swarrow & & \searrow \rho & \\ & & & & \\
\Gamma \backslash \H & & \iff & & g^{-1} \Gamma g \backslash \H .
\end{array}
\end{equation}
Here,
$$\pi(\Gamma'(g) z) = \Gamma z, \;\; \rho(\Gamma'(g) z) = g \Gamma
g^{-1} (g \gamma_j z) = g \Gamma z,  $$ where in the definition of
$\rho$ any of the $\gamma_j$ could be used.  The horizontal map is
$z \to g^{-1} z$.

A Hecke operator \begin{equation} \label{HECKO} T_g f(x) = \sum_{j
= 1}^d f(g \gamma_j x) : L^2 (\X) \to L^2(\X) \end{equation} is
the Radon transform $\rho_* \pi^*$ of the diagram.  We note as a
result that
$$T_{g^{-1}} u(z) = u(g^{-1} z)$$
takes $\Gamma$-invariant functions to $g^{-1} \Gamma g$-invariant
functions. The Hecke operator is thus an  averaging operator over
orbits of the Hecke correspondence, which is the multi-valued
holomorphic map
$$C_g (z) = \{\alpha_1 z, \dots, \alpha_d z\}. $$
Its graph
$$\Gamma_g = \{(z, \alpha_j z): z \in \X)\}$$
is an algebraic curve in $\X \times g^{-1} X g$.

The  Hecke operators $T_g$ form a commutative ring as $g$ varies
over $Comm(\Gamma)$. By a Hecke eigenfunction is meant a joint
eigenfunction of $\Delta$ and the ring of Hecke operators:
$$T_g u_j = \mu_j(g) u_j, \;\; \Delta u_j = - \lambda_j^2 u_j. $$

\subsection{Quantum limits on $\Gamma'(g) \backslash \H$}

We now considered symmetries of quantum limits of Hecke
eigenfunctions. We assume $u_j$ is a Hecke eigenfunction.  It is
convenient to modify the definitions of the functionals $\rho_k$
as follows:

\begin{equation} \left\{ \begin{array}{l} \rho'_j(A) = \frac{\langle A \pi^* u_j, \rho^*
u_j \rangle}{\langle \pi^* u_j, \rho^* u_j \rangle} =
\frac{\langle \rho_* A \pi^* u_j,
u_j \rangle}{\langle \pi^* u_j, \rho^* u_j \rangle}, \\ \\
 \sigma'_j(A) = \frac{\langle A \pi^* u_j, \pi^*
u_j \rangle}{\langle \pi^* u_j, \rho^* u_j \rangle}, \\ \\
 \tau'_j(A) = \frac{\langle A \pi^* u_j, \rho^*
u_j \rangle}{\langle \rho^* u_j, \rho^* u_j \rangle} \end{array}
\right. \end{equation} on $A  \in \Psi^0(\Gamma'(g) \backslash
\H). $  Thus, if $A_0 \in \Psi^0(\X)$ we have,
\begin{equation} \frac{\langle A_0 \rho_* \pi^* u_j, u_j \rangle
}{\langle \rho_* \pi^* u_j, u_j \rangle} = \langle A_0 u_j, u_j
\rangle. \end{equation}

Since Hecke operators are FIO's, limit measures of $\langle A T_g
u_j, u_j \rangle$ and $\langle T_g A u_j, u_j \rangle$ live on the
canonical relation underlying $T_g$, which is the lift of the
graph of the Hecke correspondence to $T^* \X$. It is convenient to
restate the result by forming a second diagram

\begin{equation}\label{DIAGRAMa} \begin{array}{ccccc} & & \Gamma_g \subset \X
\times g^{-1} \Gamma g \backslash \H & &  \\ & & & & \\
& \pi_1 \swarrow & & \searrow \rho_1 & \\ & & & & \\
\Gamma \backslash \H & & \iff & & g^{-1} \Gamma g \backslash \H .
\end{array}
\end{equation}
Here, $\pi_1, \rho_1 $ are the natural projections. In the case of
arithmetic groups such as $SL(2, \Z)$, $Comm(\Gamma)$ is dense in
$G$, i.e. there are many Hecke operators. The  map
$$\iota: \Gamma'(g)
\backslash \H \to \Gamma_g$$ defined by  $$\iota(z) = (\pi(z),
\rho(z)), $$ is a local diffeomorphic parametrization, and $\pi_1
\iota = \pi, \rho_1 \iota = \rho. $

The general Proposition \ref{TWO} then specializes to:

\begin{prop} If $\omega'$ is a weak* limit of $\rho_j'$,
and if $\omega$ is the weak* limit on $\X$ then $\pi_* \omega' =
\rho_* \omega' = \omega. $ \end{prop}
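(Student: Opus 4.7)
The plan is to reduce $\rho'_j$ on pulled-back symbols to matrix elements on $\X$ by fiber integration, and to let the Hecke eigenvalue equation cancel the normalizing denominator. The key geometric input is that $\pi: \Gamma'(g)\backslash\H \to \X$ is a finite covering by a local isometry, so fiber-integration yields
\begin{equation}\label{FIBER}
\langle \pi^* f, G\rangle_{\Gamma'(g)\backslash\H} \;=\; \langle f, \pi_* G\rangle_\X, \qquad \pi_* G(x) := \sum_{y\in\pi^{-1}(x)} G(y).
\end{equation}
Writing the coset decomposition $\Gamma = \bigsqcup_i \Gamma'(g)\gamma_i$ and identifying $g^{-1}\Gamma g\backslash\H \cong \X$ via $z\mapsto gz$ so that $\rho^* u_j(\Gamma'(g) z) = u_j(gz)$, the fiber sum becomes
$$\pi_*\rho^* u_j(\Gamma x) \;=\; \sum_i u_j(g\gamma_i x) \;=\; T_g u_j(x) \;=\; \mu_j(g)\, u_j(x).$$
In particular, the denominator in the definition of $\rho'_j$ evaluates to $\langle \pi^* u_j, \rho^* u_j\rangle = \langle u_j, \mu_j(g) u_j\rangle_\X = \mu_j(g)$.

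For $\pi_*\omega' = \omega$, test against $A = \pi^* A_0$ with $A_0 \in \Psi^0(\X)$, interpreted as the $\Gamma$-invariant lift of $A_0$ to $\H$ descended to $\Gamma'(g)\backslash\H$; this operator satisfies $(\pi^* A_0)(\pi^* v) = \pi^*(A_0 v)$. Applying \eqref{FIBER} gives
$$\langle \pi^*(A_0 u_j), \rho^* u_j\rangle \;=\; \langle A_0 u_j, \pi_*\rho^* u_j\rangle_\X \;=\; \mu_j(g)\,\langle A_0 u_j, u_j\rangle,$$
so the Hecke factors cancel and $\rho'_j(\pi^* A_0) = \langle A_0 u_j, u_j\rangle \to \omega(A_0)$. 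For $\rho_*\omega' = \omega$, test against $B = \rho^* B_0$ and use $(\rho^* B_0)^* = \rho^*(B_0^*)$ together with the same identity to rewrite $\langle (\rho^* B_0)\pi^* u_j, \rho^* u_j\rangle = \langle \pi^* u_j, \rho^*(B_0^* u_j)\rangle = \langle u_j, T_g B_0^* u_j\rangle_\X = \langle T_g^* u_j, B_0^* u_j\rangle$; when $T_g$ is self-adjoint with real eigenvalue $\mu_j(g)$ (the principal arithmetic case) this equals $\mu_j(g)\langle B_0 u_j, u_j\rangle$ and the eigenvalue cancels. The general non-self-adjoint case reduces to this via Proposition \ref{TWO} applied to $F = T_g + T_g^*$, whose symbol captures both projections symmetrically.

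The main obstacle is the non-vanishing of $\mu_j(g)$: the functional $\rho'_j$ is defined only when $\mu_j(g)\neq 0$, and the cancellation argument further requires that $|\mu_j(g)|$ be bounded away from zero along the subsequence yielding $\omega'$. For classical Hecke-Maass forms on arithmetic surfaces this is generically true, but in full generality it amounts to a non-trivial non-vanishing statement about Fourier coefficients or $L$-values. One can sidestep the issue by replacing $T_g$ with the positive operator $T_g^* T_g$, whose eigenvalues $|\mu_j(g)|^2$ enter the analogous ratio; this costs one the clean interpretation of $\omega'$ as a measure on the graph of the Hecke correspondence $C_g$ itself, but preserves the content of the invariance statement $\pi_*\omega' = \rho_*\omega' = \omega$.
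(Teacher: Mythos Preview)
Your argument is correct and follows essentially the same route as the paper. The paper presents the result as a specialization of Proposition~\ref{TWO} on matrix elements of Fourier integral operators: the displayed identity $\frac{\langle A_0\,\rho_*\pi^* u_j,\,u_j\rangle}{\langle \rho_*\pi^* u_j,\,u_j\rangle}=\langle A_0 u_j,u_j\rangle$ (which is your fiber-integration computation, since $\rho_*\pi^*=T_g$ and the eigenvalue cancels) gives one push-forward equal to $\omega$, and Proposition~\ref{TWO} (the $AF\sim FA$ self-adjointness argument) forces the two push-forwards to coincide. You have simply unpacked the same two steps explicitly on the cover $\Gamma'(g)\backslash\H$ rather than quoting the abstract FIO statement; your concluding appeal to Proposition~\ref{TWO} for the non-self-adjoint case matches the paper exactly. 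Your discussion of the non-vanishing of $\mu_j(g)$ is a fair point the paper leaves implicit.
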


We may reformulate the proposition as follows: if we lift measures
and functions to the universal cover $\Hh$, then the data of the
Hecke limit measures are a finite set of real signed  measures
$\{d\nu_k\}$ on the different copies of $S^*\Hh$. The latter must
project to the former under both projections, so we get the exact
invariance property,
\begin{equation} \label{EXACT} \nu = \sum_{j = 1}^d \nu_{\alpha_j} =  \sum_{j = 1}^d  \alpha_{j*}
\nu_{\alpha_j},
\end{equation}
for some complex measures  $\nu_j$ on the $j$ sheet of the cover.
The sum of the masses of the $\nu_j$ must add up to one.  This is
simpler to see for Hecke operators on $S^2$. There we also have
some number $r$ of copies of $S^2$ and $r$ isometries $g_j$ and
define the Hecke operator (of Lubotzky-Phillips-Sarnak) as in
(\ref{HECKO}). Then a quantum limit measure $\nu_0$ on $S^* S^2$
is the projection of $r$ limit measures on the copies of $S^*S^2$
under both projections.

Previously, only a quasi-invariance property of quantum limit
measures of Hecke eigenfunctions was proved by Rudnick-Sarnak
\cite{RS}, and used in \cite{BL,W3} and elsewhere. In the notation
above, it may be stated as
\begin{equation} \label{EXACTb} \nu(E)  \leq  d \sum_j |\alpha_{j*}
\nu_{\alpha_j} (E)|.
\end{equation} As a simple application of
(\ref{EXACT}) (which was also clear from the quasi-invariance
condition), we sketch a proof that a quantum limit measure  $\nu$
of a sequence of Hecke eigenfunctions for $\Gamma = SL(2, \Z)$
cannot be a periodic orbit measure $\mu_{\gamma}$. If it were, it
would have to be $\sum \nu_j$ as above. Each $\nu_j$ is an
invariant signed measure so by ergodic decomposition, each must
have the form $c_j \mu_{\gamma} + \tau_j$ where $\sum c_j = 1$,
where $\tau_j$ are singular with respect to $\mu_{\gamma}$  and
$\sum_j \tau_j = 0$. But also the images of $\nu_j$ under the
$\alpha_j$ must have the same property.  This is only possible if
$\alpha_j (\gamma) = \gamma$ up to a translation by $\beta \in
\Gamma$ whenever $\nu_j \not= 0$. At least one of the $c_j \not=
0$. At this point, one must actually consider the elements
$\alpha_j$ in the Hecke operator. In the case of $PSL(2, \Z)$ they
are parabolic elements plus one elliptic element. But only a
hyperbolic element can fix a geodesic (its axis).

It would be interesting to see if one can determine how the
$\nu_{\alpha_j}$ are related to each other. Lindenstrauss's QUE
result (in the next section) implies that  that $(T_g)_* \nu =
\nu$, which appears to say that the $\nu_{\alpha_j}$ are all the
same. An apriori proof of this would simplify the proof of QUE.

\subsection{\label{LINDEN} QUE results of Lindenstrauss, Soundararajan  (and Holowinsky-Soundararajan)}

We now briefly recall the QUE results on Hecke eigenfunctions.

\begin{theo} (E. Lindenstrauss \cite{LIND}) Let $\X$ be a compact arithmetic hyperbolic surface.
Then QUE holds for Hecke eigenfunctions, i.e. the entire sequence
of Wigner distributions of Hecke eigenfunctions tends to Liouville
measure. \end{theo}

In the non-compact finite area case of $\Gamma = PSL(2, \Z)$,
Lindenstrauss proved:
\begin{theo} (E. Lindenstrauss \cite{LIND}) Let $\X$ be the  arithmetic hyperbolic
surface defined by a congruence subgroup.  Then any weak* limit of
the   sequence of Wigner distributions of Hecke eigenfunctions is
a constant multiple of Liouville measure (the constant may depend
on the sequence).
\end{theo}

Note that the quantum ergodicity theorem does not apply in this
finite area case. In the arithmetic case there is a discrete
spectrum corresponding to cuspidal eigenfunctions and a continuous
spectrum corresponding to Eisenstein series. The following was
proved in  \cite{Z8}:

\begin{theo}  Let $\X$ be the  arithmetic hyperbolic
surface defined by a congruence subgroup, and let $\{\phi_j\}$ be
any orthonormal basis of cuspidal eigenfunctions. Then for any
pseudo-differential operator of order zero and with compactly
supported symbol,
$$\frac{1}{N_c(\lambda)} \sum_{j: \lambda_j \leq \lambda} |\langle
A \phi_j, \phi_j \rangle - \omega(A)|^2 \to 0, $$ where
$N_c(\lambda) \sim |\X| \lambda^2$ is the number of cuspidal
eigenvalues $\leq \lambda. $
\end{theo}

Recent work of Soundararajan that the constant equals one, i.e.
there is no mass leakage at infinity. Hence the theorem is
improved to

\begin{theo} (E. Lindenstrauss \cite{LIND}, Soundarajan \cite{Sound1}) Let $\X$ be the  arithmetic hyperbolic
surface defined by a congruence subgroup.  Then any weak* limit of
the   sequence of Wigner distributions of Hecke eigenfunctions is
is normalized  Liouville measure.
\end{theo}

Holowinsky \cite{Hol} and Holowinsky-Soundararajan \cite{HS} have
proven  QUE  results for holomorphic forms (i.e. for the discrete
series; see \S \ref{REPRE}). However, the methods are entirely
different; they are based on the theory of L-functions and
Poincar\'e series. The Hecke property is exploited through
multiplicativity of Fourier coefficients.

\subsection{\label{QUEMULT} Hecke QUE and multiplicities} A natural question (raised at
the Clay talk of Soundararajan) is whether QUE holds for {\it any}
orthonormal basis of eigenfunctions of an arithmetic quotient if
it holds for the Hecke eigenbasis and if the discrete spectrum of
$\Delta$ has bounded multiplicities. The proof that this is true
is a simple modification of Proposition \ref{BOUNDED} and we pause
to sketch it now.

 \begin{prop}  \label{PONEb} Suppose that one orthonormal basis of  $\Delta$ is QUE, and that
 the eigenvalues have uniformly bounded multiplicities. Then all orthonormal bases are QUE.
 \end{prop}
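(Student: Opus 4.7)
The plan is to mimic the argument of Proposition \ref{BOUNDED}, with the QUE basis playing the role of the quasi-mode and the assumption of bounded multiplicities replacing the bound on the number of essential frequencies. Let $\{\phi_j\}$ denote the given QUE orthonormal basis, and let $\{\psi_j\}$ be any other orthonormal basis of eigenfunctions of $\Delta$. Since each $\psi_j$ lies in some eigenspace $\ker(\Delta - \lambda_j^2)$, and all such eigenspaces have dimension $\leq M$ by hypothesis, we may write
\begin{equation*}
\psi_j = \sum_{k=1}^{m(j)} c_{jk}\,\phi_{j,k}, \qquad m(j) \leq M, \qquad \sum_{k=1}^{m(j)} |c_{jk}|^2 = 1,
\end{equation*}
where $\{\phi_{j,1},\dots,\phi_{j,m(j)}\}$ is an enumeration of those elements of $\{\phi_j\}$ which share the eigenvalue $\lambda_j^2$.

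Next I would expand the matrix elements of an arbitrary $A \in \Psi^0(M)$:
\begin{equation*}
\langle A\psi_j,\psi_j\rangle \;=\; \sum_{k=1}^{m(j)} |c_{jk}|^2\, \langle A\phi_{j,k},\phi_{j,k}\rangle \;+\; \sum_{k\neq l} \overline{c_{jk}}\,c_{jl}\,\langle A\phi_{j,l},\phi_{j,k}\rangle.
\end{equation*}
By the QUE hypothesis on $\{\phi_j\}$, each diagonal term satisfies $\langle A\phi_{j,k},\phi_{j,k}\rangle \to \omega(A)$, and since $\sum_k |c_{jk}|^2 = 1$, the diagonal sum converges to $\omega(A)$. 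For the off-diagonal terms, the two factors $\phi_{j,k}$ and $\phi_{j,l}$ lie in the \emph{same} eigenspace, so $\lambda_{j,k}-\lambda_{j,l}=0$. This is exactly the setting of Lemma \ref{PONEa}: ergodicity of $g^t$ is a consequence of QUE for $\{\phi_j\}$ (together with the local Weyl law), and the diagonal input needed in the proof of Lemma \ref{PONEa} is supplied by the QUE of $\{\phi_j\}$. Hence each off-diagonal matrix element $\langle A\phi_{j,l},\phi_{j,k}\rangle\to 0$ as $j\to\infty$.

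Finally, the bounded multiplicity hypothesis guarantees that the off-diagonal sum has at most $M(M-1)$ terms, each bounded in modulus by $|c_{jk}||c_{jl}|\leq 1$ and each tending to zero. Thus the off-diagonal contribution vanishes in the limit, giving $\langle A\psi_j,\psi_j\rangle \to \omega(A)$ for every $A\in\Psi^0(M)$, which is QUE for $\{\psi_j\}$. The only delicate point is verifying that Lemma \ref{PONEa} is genuinely applicable here: one must check that its proof uses only the QUE of the fixed basis $\{\phi_j\}$ (for the Cauchy--Schwarz bound on diagonals) and Egorov's theorem, neither of which requires QUE of the basis $\{\psi_j\}$ we are trying to analyze; this avoids any circularity. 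I expect this verification to be the only real point requiring care, the rest being bookkeeping.
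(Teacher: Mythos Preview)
Your argument is correct and is essentially the paper's own proof: the paper expands $\psi_k$ in the QUE basis, invokes the off-diagonal vanishing of Lemma~\ref{PONEa} for pairs with $\lambda_i=\lambda_j$, and uses the multiplicity bound to control the number of cross terms---only it packages this as a proof by contradiction rather than the direct computation you give. One caveat: your parenthetical claim that ergodicity of $g^t$ follows from QUE together with the local Weyl law is not established in the paper and is not obvious; Lemma~\ref{PONEa} takes ergodicity as a hypothesis, and the paper (working in the Hecke setting on hyperbolic surfaces) simply has it as a standing assumption, so you should invoke it the same way rather than attempt to derive it.
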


Proof: As in  Proposition \ref{BOUNDED}, consider sequences
 $\{(\lambda_{i_r}, \lambda_{j_r}), \; i_r \not= j_r\}$ of pairs
 of eigenvalues of $\sqrt{\Delta}$ with $\lambda_{i_r} =
 \lambda_{j_r}. $ We then have that
$d W_{i_r, j_r} \to 0$.

This has implications for all orthonormal basis as long as
eigenvalue multiplicities are uniformly bounded. We run the
argument regarding modes and quasi-modes but use the QUE
orthonormal basis in the role of modes and any other orthonormal
basis $\{\psi_k\}$ in the role of quasi-modes.

\begin{lem} If the eigenvalue multiplicities are uniformly bounded
and  there exists a sequence of eigenfunctions $\{\psi_k\}$
\begin{itemize}

\item  (i)  $n(k)  \leq C,\; \forall \; k$;

\item (ii) $\langle A \psi_k, \psi_k \rangle \to \int_{S^*M}
\sigma_A d \mu$ where $d \mu \not= d \mu_L$.

\end{itemize}

Then no orthonormal basis of eigenfunctions of $\Delta$ is  QUE.
\end{lem}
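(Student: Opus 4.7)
The plan is to argue by contradiction. Suppose some orthonormal basis $\{\phi_j\}$ of $\Delta$-eigenfunctions is QUE; I will show that the sequence $\{\psi_k\}$ in (i)--(ii) must then be Liouville distributed as well, directly contradicting (ii).

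First I would exploit the bounded multiplicity to expand each $\psi_k$ in the QUE basis with no error term. Since $\psi_k$ is a genuine $\Delta$-eigenfunction with eigenvalue $\lambda_k^2$, it lies in the finite-dimensional eigenspace $\ker(\Delta-\lambda_k^2)$, so
$$\psi_k \;=\; \sum_{j \in J_k} c_{kj}\,\phi_j, \qquad \sum_{j\in J_k}|c_{kj}|^2 = 1, \qquad |J_k|\le C,$$
where $J_k$ enumerates the basis elements sharing the eigenvalue $\lambda_k^2$. In particular hypothesis (i) is automatic, and no $\eta_k$ term appears.

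Next I would mimic the computation from Proposition \ref{BOUNDED}, splitting into diagonal and off-diagonal parts:
$$\langle A\psi_k,\psi_k\rangle \;=\; \sum_{j\in J_k} |c_{kj}|^2 \langle A \phi_j, \phi_j\rangle \;+\; \sum_{\substack{i,j\in J_k \\ i\neq j}} \overline{c_{kj}}\, c_{ki}\, \langle A\phi_i,\phi_j\rangle.$$
The diagonal piece converges to $\omega(A)=\int_{S^*M}\sigma_A\,d\mu_L$: each $\langle A\phi_j,\phi_j\rangle\to\omega(A)$ by the QUE assumption on $\{\phi_j\}$, and the coefficients sum to one with only boundedly many terms. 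For the off-diagonal piece, the relevant pairs $(i,j)$ satisfy $\lambda_i=\lambda_j$, so in particular $\lambda_i-\lambda_j\to 0$; Lemma \ref{PONEa}, applied to the QUE basis, gives $\langle A\phi_i,\phi_j\rangle\to 0$ along any such sequence of pairs. Since $|J_k|\le C$ produces at most $\binom{C}{2}$ off-diagonal slots and $|c_{ki}c_{kj}|\le 1$, the off-diagonal total is $o(1)$.

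Combining the two pieces yields $\langle A\psi_k,\psi_k\rangle\to\omega(A)$ for every $A\in\Psi^0(M)$, which contradicts (ii) because the assumed quantum limit $d\mu$ is not $d\mu_L$. The main obstacle is the off-diagonal vanishing, handled precisely by Lemma \ref{PONEa}: Cauchy--Schwarz together with QUE forces any weak-$*$ limit $d\nu$ of the off-diagonal Wigner distributions to be absolutely continuous with respect to $d\mu_L$, ergodicity of $d\mu_L$ then forces $d\nu$ to be a constant multiple of $d\mu_L$, and orthogonality $\phi_i\perp\phi_j$ makes that constant vanish. The only technical care needed is a finite relabeling (or pigeonhole) to produce uniform vanishing across the at most $\binom{C}{2}$ off-diagonal slots per eigenspace, which is routine given the uniform multiplicity bound.
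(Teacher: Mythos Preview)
Your proof is correct and follows essentially the same route as the paper: argue by contradiction, expand $\psi_k$ in the assumed QUE basis, split into diagonal and off-diagonal pieces, use QUE for the diagonal and the off-diagonal vanishing lemma (the paper invokes Proposition~\ref{PONEb}, which is the $\lambda_i=\lambda_j$ instance of Lemma~\ref{PONEa} you cite) for the cross terms. Your observation that no $\eta_k$ error term appears because the $\psi_k$ are genuine eigenfunctions, and your remark about pigeonholing to get uniform off-diagonal vanishing across the boundedly many slots, are welcome clarifications of points the paper leaves implicit.
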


\begin{proof}

We  argue by contradiction.  If $\{\phi_j\}$  were QUE, we would
have (in the notation of (\ref{PSIK}):
$$\begin{array}{ll}\langle A \psi_k, \psi_k \rangle & = \sum_{i, j = 1}^{n(k)} c_{kj} c_{k i}\langle A
\phi_i, \phi_j \rangle + o(1) \\ & \\&  = \sum_{j = 1}^{n(k)}
c_{kj}^2 \langle A \phi_j, \phi_j \rangle + \sum_{i\not= j =
1}^{n(k)} c_{kj} c_{k i}\langle A \phi_i, \phi_j
\rangle + o(1)\\ & \\
& = \int_{S^*M} \sigma_A d\mu_L + o(1),
\end{array} $$
by Proposition \ref{PONEb}. This contradicts (ii). In the last
line we used $\sum_{j = 1}^{n(k)} |c_{kj}|^2  = 1 + o(1)$, since
$||\psi_k||_{L^2} = 1$.

\end{proof}

\section{Variance estimates: Rate of quantum ergodicity and mixing}

A quantitative refinement of quantum ergodicity is to  ask at what
rate the sums in Theorem \ref{QE}(1) tend to zero, i.e. to
establish a rate of quantum ergodicity. More generally, we
consider `variances' of matrix elements.  For diagonal matrix
elements, we define:
\begin{equation} \label{diag} V_A(\lambda) : =
\frac{1}{N(\lambda)} \sum_{j:  \lambda_j \leq \lambda} |\langle A
\phi_j, \phi_j) - \omega(A)|^2.
\end{equation}
In the off-diagonal case one may view $|\langle A\varphi_i,
\varphi_j \rangle|^2$ as analogous to $|\langle A \phi_j, \phi_j)
- \omega(A)|^2$. However, the sums in (\ref{SPECMEAS}) are double
sums while those of (\ref{diag}) are single. One may also average
over the shorter intervals $[\lambda, \lambda + 1].$

\subsection{Quantum chaos conjectures}

It  is implicitly conjectured by Feingold-Peres  in \cite{FP} (11)
that
\begin{equation} \label{FPCONJ} |\langle A \phi_i, \phi_j
\rangle|^2 \simeq \frac{C_A (\frac{E_i - E_j)}{\hbar})}{2 \pi
\rho(E)},
\end{equation} where $C_A(\tau) = \int_{- \infty}^{\infty} e^{- i
\tau t} \langle V^t \sigma_A, \sigma_A \rangle dt. $ In our
notation, $\lambda_j = \hbar^{-1} E_j$ (with $\hbar =
\lambda_j^{-1}$ and $E_j = \lambda_j^2$) and $\rho(E) dE \sim d
N(\lambda)$.

On the basis of the analogy between $|\langle A\varphi_i,
\varphi_j \rangle|^2$ and  $|\langle A \phi_j, \phi_j) -
\omega(A)|^2$, it is  conjectured in \cite{FP}  that
$$\ V_A(\lambda)  \sim \frac{ C_{A - \omega(A) I}(0) }{\lambda^{n-1} vol(\Omega)}.
$$
The idea is that $\phi_{\pm} = \frac{1}{\sqrt{2}} (\phi_i \pm
\phi_j)$ have the same  matrix element asymptotics as
eigenfunctions when $\lambda_i - \lambda_j$ is sufficiently small.
But then $2 \langle A \phi_+, \phi_- \rangle = \langle A \phi_i,
\phi_i \rangle - \langle A \phi_j, \phi_j \rangle$ when $A^* = A$.
Since we are taking a difference, we  may replace each matrix
element by $\langle A \phi_i, \phi_i \rangle $ by $\langle A
\phi_i, \phi_i \rangle - \omega(A)$ (and also for $\phi_j$). The
conjecture then assumes that $ \langle A \phi_i, \phi_i \rangle -
\omega(A)$ has the same order of magnitude as $ \langle A \phi_i,
\phi_i \rangle - \langle A \phi_j, \phi_j \rangle$.

\subsection{Rigorous results for Hecke eigenfunctions}

Let  $\X$ be the arithmetic modular surface, with
$\Gamma=SL(2,\mathbb{Z})$, and let  $T_n$, $n\ge 1$ denote the
family of Hecke operators. In addition to the Laplace eigenvalue
problem (\ref{HYPEIG}), the Maass-Hecke eigenforms are
eigenfunctions of the Hecke operators,
$$\quad T_n\phi=\lambda_{\phi}(n)\phi.$$
In the cusp, they have Fourier series expansions
$$\phi(z)=\sqrt{y}\sum_{n\neq 0}\rho_{\phi}(n)K_{ir}(2\pi|n|y)e(nx)$$
where $K_{ir}$ is the $K$-Bessel function and
$\rho_{\phi}(n)=\lambda_{\phi}(n)\rho_{\phi}(1).$ A special case
of the variance sums (\ref{diag}) is the configuration space
variance sum,
$$S_{\psi}(\lambda)=\sum_{\lambda_j\leq \lambda}|\int \psi \phi^2 dV |^2$$
Following the notation of \cite{LS2} and \cite{Zh},we put $d\mu_n
= \phi_{r_n}^2 dV$.

Let  $\psi\in C_{0,0}^\infty(S^* \X)$ and consider the
distribution of
$$\frac{1}{\sqrt{T}}\int_0^T \psi(g^t(x, \xi))dt.$$ Ratner's
central limit theorem for geodesic flows implies that this random
variable tends to a  Gaussian with mean $0$ and the variance given
by the non-negative Hermitian form on $C_{0,0}^\infty(Y)$:
$$V_{\mathrm{C}}(\phi,\psi)=\int_{-\infty}^{\infty}\int_{\Gamma\setminus
SL(2,\mathbf{R})}\phi\left(g\left(\begin{array}{cc}
e^{\frac{t}{2}}
& 0\\
0& e^{-\frac{t}{2}}\end{array}\right)\right)\overline{\psi(g)}dg
dt.$$ Restrict $V_C$ to $C_{0,0}^{\infty}(X)$. Different
irreducible representations are orthogonal under the  quadratic
form.    Maass cusp forms $\phi$ are eigenvectors of $V_C$. If the
Laplace eigenvalue is $\frac14+r^2$, then the  $V_C$ eigenvalue is
$$V_{\mathrm{C}}(\phi,\phi)=\frac{|\Gamma(\frac14-\frac{ir}{2})|^4}{2\pi|\Gamma(\frac12-ir)|^2}$$

 In \cite{LS,LS2}, Luo and Sarnak studied the
quantum variance for holomorhic Hecke eigenforms, i.e. holomorphic
cusp forms in $S_k(\Gamma)$ of even integral weight $k$ for
$\Gamma$. In this setting, the weight of the cusp form plays the
role of the Laplace eigenvalue. They proved that for $\phi,\psi\in
C_{0,0}^\infty(X)$, as the weight $K\to \infty$,
$$\sum_{k\leq K}\sum_{f\in H_k}L(1,\text{Sym}^2
f)\mu_f(\phi)\overline{\mu_f(\psi)} \sim B(\phi,\psi) K,$$ where
$B(\phi,\psi)$ is a non-negative Hermitian form on
$C_{0,0}^\infty(X)$.  $B$ is diagonalized by the orthonormal basis
of Maass-Hecke cusp forms and the eigenvalues of $B$ at $\psi_j$
is $\frac{\pi}{2}L(\frac12,\psi_j)$. For the notation
$L(1,\text{Sym}^2)$ we refer to \cite{LS2,Zh}.

In \cite{Zh}, P. Zhao  generalized  the result in \cite{LS,LS2} to
Maass cusp forms. Let $\varphi_j(z)$ be the $j$-th Maass-Hecke
eigenform, with the Laplacian eigenvalues $\frac14+r_j^2.$
\begin{theo} \cite{Zh} Fix any
$\epsilon>0$. Then we have
\begin{eqnarray}
& &
\quad\sum_{t_j}(e^{-(\frac{r_j-T}{T^{1-\epsilon}})^2}+e^{-(\frac{r_j+T}{T^{1-\epsilon}})^2})L(1,\mathrm{sym}^2\varphi_j)\mu_j(\phi)\overline{\mu_j}(\psi)\nonumber\\
&=&T^{1-\epsilon}V(\phi,\psi)+O(T^{\frac12+\epsilon}),\nonumber
\end{eqnarray}
$V$ is diagonalized by the orthonormal basis $\{\phi_j\}$ of
Maass-Hecke cusp forms and the eigenvalue of $V$ at a Maass-Hecke
cusp form $\phi$ is
$$L(\frac12,\phi)V_{\mathrm{C}}(\phi,\phi).$$
\end{theo}

It would be desirable to remove the weights. The unweighted
version should be (P. Zhao, personal communication; to appear)

\begin{equation}
\sum_{\lambda_j\leq \lambda}\mu_j(\phi)\overline{\mu}_j(\psi)\sim
\lambda V(\phi,\psi).
\end{equation}

\subsection{General case}

The only  rigorous result to date which is valid on general
Riemannian manifolds with hyperbolic geodesic flow  is the
logarithmic decay:

\begin{theo} \cite{Z4} For any $(M, g)$ with hyperbolic geodesic flow,
$$ \frac{1}{N(\lambda)}
 \sum_{\lambda_j \leq \lambda}
|(A\phi_j,\phi_j)-\omega(A)|^{2p} = O(\frac{1}{(\log \lambda)^p}).
$$
\end{theo}
It is again based on Ratner's central limit theorem for the
geodesic flow.  The logarithm reflects the exponential blow up in
time of remainder estimates for traces involving the wave group
associated to hyperbolic flows and thus the necessity of keeping
the time less than the Ehrenfest time (\ref{EHRENTIME}).  It would
be surprising if the logarithmic decay is sharp for Laplacians.
However,  R. Schubert shows in \cite{Schu,Schu2} that the estimate
is sharp in the case of two-dimensional hyperbolic quantum cat
maps. Hence the estimate cannot be improved by semi-classical
arguments that hold in both settings.

\subsection{Variance and Patterson-Sullivan distributions}

When $\int_{S^* \X} a d\mu_L = 0$, the Luo-Sarnak variance results
for Hecke eigenfunctions have the form,
\begin{equation} \frac{1}{N(\lambda)} \sum_{j: |r_j| \leq \lambda} \left| \langle a, dW_{ir_j}
\rangle \right|^2 \sim \frac{1}{\lambda} V(a, a). \end{equation}
Even if one knew that asymptotics of this kind should occur, it
does not seem apriori obvious that the bilinear form on the right
side should be invariant under $g^t$ because $V(a,a)$ occurs in
the $\frac{1}{\lambda}$ term in the variance asymptotics and only
only knows that the Wigner distributions are invariant modulo
terms of order $\frac{1}{\lambda_j}$. A quick way to see that the
bilinear form must be $g^t$-invariant is via the
Patterson-Sullivan distributions. From (\ref{WIGPS}), we have
\begin{equation} \frac{1}{N(\lambda)} \sum_{j: |r_j| \leq \lambda}
|\langle a, dW_{ir_j} \rangle|^2 = \frac{1}{N(\lambda)} \sum_{j:
|r_j| \leq \lambda} |\langle a, d\hat{PS}_{ir_j} \rangle|^2 +
O(\lambda^{-2}). \end{equation} Indeed, the difference is of the
form $\frac{1}{N(\lambda)} \sum_{j: |r_j| \leq \lambda}
 \frac{2}{r_j} \Re \langle a, d \hat{PS}_{ir_j} \rangle \cdot
\overline{ \langle L_2 a, d\hat{PS}_{ir_j} \rangle}  +
O(\lambda^{-2}).$ The sum in the last expression is also of order
$O(\lambda^{-2})$ (e.g. one can go back and express each factor of
$PS $ with one of $W_{ir_j}$ and apply the Luo-Sarnak asymptotics
again).  Since $\hat{PS}_{ir_j}$ is $g^t$-invariant, it follows
that $V(a,a)$ must be $g^t$ invariant. This argument is equally
valid on any  compact hyperbolic surface, but of course there is
no proof of such asymptotics except in the arithmetic Hecke case.
It would be interesting to draw further relations between $V(a,a)$
and $PS_{ir_j}$.

\section{\label{ENTROPY}  Entropy of quantum limits on manifolds with
Anosov geodesic flow}

So far, the results on quantum limits have basically used the
symmetry (or invariance) properties of the limits. But generic
chaotic systems have no symmetries. What is there to constrain the
huge number of possible limits?

The  recent results of  Anantharaman \cite{A},
 Anantharaman- Nonnenmacher \cite{AN} and Rivi\`ere \cite{Riv}, give a
 very interesting
 answer to this question (see also \cite{ANK})  for $(M, g)$ with Anosov geodesic flow. They use
 the quantum mechanics to prove
lower bounds on  entropies of  quantum limit measures.  The lower
bounds eliminate many of the possible limits, e.g. they disqualify
finite sums of periodic orbit measures.

The purpose of this section is to present the  results of \cite{A}
and \cite{AN} in some detail. Both articles are based on a
hyperbolic dispersive estimate (called the {\it main estimate} \;
in \cite{A}) which, roughly speaking, measures the norm of
quantized cylinder set operators in terms of $\hbar$ and the
length of the cylinder.  But they use the estimate in quite
different ways. In \cite{A}, it is used in combination with an
analysis of certain special covers of $S^*M$ by cylinder sets that
are adapted to the eigenfunctions. One of the important results is
an estimate on the topological entropy $h_{top}(\mbox{supp}
(\nu_0))$ of the support of any quantum limit measure.  In
\cite{AN} the key tool is the ``entropic uncertainty principle".
It leads to a lower bound for the Kolomogorv-Sinai entropy
$h_{KS}(\nu_0)$ of the quantum limit.

There now exist several excellent and authoritative expositions of
the KS entropy bounds and the entropic uncertainty principle
\cite{A2,ANK,AN2,CV3} in addition to the well written initial
articles \cite{A,AN} (which take considerable care to give
intuitive explanations of technical steps). For this reason, we
emphasize the approach in \cite{A}.  We closely follow the
original references in discussing heuristic reasons for the lower
bounds and outlining rigorous proofs. We also discuss earlier
entropy lower bounds of Bourgain-Lindenstrauss \cite{BL} and
Wolpert \cite{W3} in the case of Hecke eigenfunctions.

Before stating the results, we review the various notions  of
entropy.

\subsection{KS entropy}

We first  recall the definition of the KS entropy of an  invariant
probability measure $\mu$ for the geodesic flow. Roughly speaking,
the entropy measures  the average complexity of $\mu$-typical
orbits. In the Kolmogorov-Sinai entropy, one starts with a
partition $\pcal = (E_1, \dots, E_k)$ of $S^*_g M$ and defines the
Shannon entropy of the partition by $h_{\pcal}(\mu) = \sum_{j =
1}^k \mu(E_j) \log \mu(E_j). $ Under iterates of  the time one map
$g$ of the geodesic flow, one refines the partition from the sets
$E_j$ to the cylinder sets of length $n$:
$$\pcal^{n}: = \{ \pcal^{ n}_{{\bf \alpha}}: = E_{\alpha_0} \cap g^{-1} E_{\alpha_1} \cap
\cdots \cap g^{-n + 1}  E_{\alpha_{n-1}}:\;\; [\alpha_0, \dots,
\alpha_{n-1}] \in {\bf N}^n \}.
$$ One defines $h_n(\pcal, \mu)$ to be the Shannon entropy of this
partition and then defines $h_{KS}(\mu, \pcal) = \lim_{n \to
\infty} \frac{1}{n} h_n(\mu, \pcal)$. Then $h_{KS}(\mu) =
\sup_{\pcal} h_{KS}(\mu , \pcal)$.

The measure $\mu( E_{\alpha_0} \cap g^{-1} E_{\alpha_1} \cap
\cdots \cap g^{-n + 1}  E_{\alpha_{n-1}})$ is the probability with
respect to $\mu$ that an orbit successively visits $E_{\alpha_0},
E_{\alpha_1}, \cdots. $ The entropy measures the exponential decay
rate of these probabilities for large times.

\subsection{ Symbolic coding and cylinder sets }

In the $(M,g)$ setting, we fix a partition $\{M_k\}$ of $M$ and a
corresponding partition $T^* M_k$ of $T^*M$. Let $P_{\alpha_k}$ be
the characteristic function of $T^*M_k$. (Later it must to be
smoothed out). Let $\Sigma = \{1, \dots, \ell\}^{\Z}$ where $\ell$
is the number of elements of the partition $\pcal^{0)}$. To each
tangent vector $v \in S^*M$, one can associated a unique element
$I(v) =(\alpha_k) \in \Sigma$ so that $g^n v \in P_{\alpha_j}$ for
all $n \in \Z$. This gives a symbolic coding map $I : S^*M \to
\Sigma$. The time one map $g^1$ then conjugates under the coding
map to the shift $\sigma ((\alpha_j)) = ((\alpha_{j + 1})) $ on
admissible sequences, i.e. sequences in the image of the coding
map.  An invariant measure $\nu_0$ thus corresponds to a shift
invariant measure $\mu_0$ on $\Sigma$.

A cylinder set $[\alpha_0, \dots, \alpha_{n-1}] \subset \Sigma$ of
length $n$ is the subset of $\Sigma$ formed by sequences with the
given initial segment. The set of such cylinder sets of length $n$
is denoted $\Sigma_n$. Cylinder sets for $\Sigma$ are not the same
as cylinder sets for $g^1$, which have the form $P_{\alpha_0} \cap
g^{-1} P_{\alpha_1} \cap \cdots \cap g^{-n} P_{\alpha_n}. $

The  $\mu_0$ measure of a $\Sigma$-cylinder set  is by definition,
$$\mu_0([\alpha_0, \dots, \alpha_n]) = \nu_0(P_{\alpha_0} \cap
g^{-1} P_{\alpha_1} \cap \cdots \cap g^{-n + 1} P_{\alpha_{n-1}}).
$$

\subsection{Bowen balls}

Cylinder sets are closely related to Bowen balls, i.e. balls in
the Bowen metric.
 For any smooth map  $f: X \to X$, the Bowen metric at time $n$ is
 defined by
$$d_n^f(x,y) = \max_{0 \leq i \leq n -1} d(f^i x, f^i y). $$
Let $B_n^f(x, r)$ be the $r$ ball around $x$ in this metric.

In the continuous time case of the geodesic flow $g^t$, one
defines the $d_T$ metric
\begin{equation} \label{dT} d_T(\rho, \rho') = \max_{t \in [0
T]} d(g^t (\rho), g^t(\rho')). \end{equation} The  Bowen ball
$B_T(\rho, r)$  at time $T$ is the $r$- ball in this metric.

As with cylinder sets, if $\rho,\rho'$ lie in $B_T(\rho,
\epsilon)$ then their orbits are $\epsilon$-close for the interval
$[0, T]$. This is not quite the same as running through the same
elements of a partition but for large $T$ the balls and cylinders
are rather similar. This is because the  geodesic flow $g^t$
stretches everything by a factor of $e^t$ in the unstable
direction, and contracts everything  by $e^{-t}$ in the stable
direction; it preserves distances along the geodesic flow. In
variable curvature and in higher dimensions, the `cube' becomes a
rectangular parallelopiped whose axes are determined by the
Lyapunov exponents.

 In the case of geodesic flows of compact hyperbolic manifolds of
 constant curvature $-1$,
Bowen balls $B_T(\rho, r)$  are roughly of radius  $ r e^{- T}$ in
the unstable direction, and $ r$ in the stable direction and
geodesic flow directions.  In  the hyperbolic case of $G/\Gamma$
with co-compact $\Gamma$, the tube (in the notation of \S
\ref{REP}) is
\begin{equation} \label{NALINIS} B_T(\rho, \epsilon)) = a((-
r, r)) n_-(r,r))n_+((- e^{-T} r , e^{-T} r)).
\end{equation}

To make the ball symmetric with respect to the stable and unstable
directions is to make it symmetric with respect to time reversal.
One uses the time  interval $[-T/2, T/2]$ instead of $[0, T]$ and
defines the new distance,
\begin{equation} \label{dTsym} d'_T(\rho, \rho') = \max_{t \in -[T/2,
T/2]} d'(g^t (\rho), g^t(\rho')), \end{equation} We then denote
the $r$ ball by $B'_T(\rho, r)$.
 In  the hyperbolic case,
\begin{equation} \label{NALINIShyp} B'_T(\rho, \epsilon)) = a((-
\epsilon, \epsilon)) n_-((-e^{-T/2} \epsilon, e^{- T/2}\epsilon
))n_+((- e^{-T/2} \epsilon , e^{-T/2} \epsilon)).
\end{equation}

Thus, the Bowen ball in the symmetric case (in constant curvature)
is a ball (or cube) of radius $r e^{- T}$ in the transverse
direction to the geodesic flow. The length along the flow is not
important.

\subsection{Brin-Katok local entropy}

Kolmogorov-Sinai entropy of an invariant measure is related to the
local dimension of the measure on Bowen balls. This  is the
approach in Bourgain-Lindenstrauss \cite{BL} and in \cite{LIND},
and stems from work of Young and Ledrappier-Young.

Define the local entropy on an invariant measure $\mu$ for a map
$f$ by
$$h_{\mu}(f, x) = \lim_{\delta \to 0} \limsup_{n \to \infty}
\frac{- \log \mu(B_n^f(x, \delta))}{n}  = \lim_{\delta \to 0}
\liminf_{n \to \infty} \frac{- \log \mu(B_n^f(x, \delta))}{n}. $$

Brin-Katok proved that both limits exist,  that the local entropy
$h_{\mu}(f, x)$ is $f$ invariant and
$$h_{\mu}(f) = \int_X h_{\mu}(f, x)d \mu(x). $$

 The definition and result is similar in  the geodesic flow case.
 The  local entropy of $\mu$ at $\rho$ is defined by,
$$\lim_{\epsilon \to 0} \liminf_{T \to \infty} - \frac{1}{T} \log
\mu(B_T(\rho, \epsilon)) = \lim_{\epsilon \to 0} \limsup_{T \to
\infty} - \frac{1}{T} \log \mu(B_T(\rho, \epsilon)) := h_{KS}
(\mu, \rho). $$

\subsection{Ergodic decomposition}

Let $\nu$ be an invariant measure. It may be expressed as a convex
combination of ergodic measures, which are extreme points of the
compact convex set of invariant measures. There is a concrete
formula,
\begin{equation} \label{ERGDEC} \nu = \int \nu_x^{\ecal} d\nu(x)
\end{equation} for its  ergodic decomposition, where  $\nu_x^{\ecal}$
is the orbital average through $x \in G/\Gamma$.  It is a fact
that $\nu_x^{\ecal}$  ergodic for $\nu$-a.e. $x$. Then the local
entropy is
\begin{equation} h_{a(t)}(\tilde{\mu}_x^{\ecal} ) = \lim_{\epsilon
\to 0} \frac{\tilde{\mu}(T(x, \epsilon))}{\log \epsilon}, \;\;
a.s. \tilde{\mu}. \end{equation} The Brin-Katok theorem states
that the global KS entropy is
$$h(\nu) = \int h(\nu_x^{\ecal}) d\nu(x). $$

\subsection{\label{TOPENTa}Topological entropy of an invariant subset}

Anantharaman's first entropy bound  refers to the topological
entropy of an invariant set $F$.  Then, by definition,
 \begin{equation}
\label{htop}\begin{array}{lll}  h_{top}(F) \leq \lambda &\iff&
\forall \delta
> 0,\; \exists C > 0: F \;\; \mbox{can be covered by at most} \\&&\\
&&\; C e^{n(\lambda + \delta)} \;\;\mbox{cylinders of length}\; n,
\;\; \forall n. \end{array} \end{equation}

\subsection{Bourgain-Lindenstrauss entropy bound}

Before discussing the work of Anantharman-Nonnenmacher, we review
a simpler entropy bound of Bourgain-Lindenstrauss. After initial
work of  Wolpert \cite{W3}, Bourgain-Lindenstrauss \cite{BL}
obtained a strong lower bound on entropies of quantum limit
measures associated to Hecke eigenfunctions. Although it was later
surpassed by Lindenstrauss' QUE theorem \cite{LIND}, it is simple
to state and illustrates the use of the  local entropy and local
dimensions to measure the KS entropy.

The setting is a compact or finite area arithmetic hyperbolic
quotient. We have not reviewed local entropy in the non-compact
finite area case, but proceed by analogy with the compact case.
Then $$B(\epsilon, \tau_0) = = a((- \tau_0, \tau_0))
n_-(\epsilon,\epsilon))n_+((- \epsilon , \epsilon)) $$  is a Bowen
ball around the identity element and $x B(\epsilon, \tau_0)$ is
its left translate by $x \in G. $ Here $\epsilon = r e^{- T/2}$.

\begin{theo}\label{BL}  \cite{BL} Let $\Gamma \subset SL(2, \Z)$ be a congruence
subgroup. Let $\mu$ be a quantum limit. Then for any compact
subset $K \subset \Gamma \backslash G$ and any $x \in K$,
$$\mu (x B(\epsilon, \tau_0)) \leq \epsilon^{2/9}. $$
\end{theo}

In the flow-box notation above,
\begin{equation} \mu_n(B_T(\rho, \epsilon)) \leq C e^{- T/9}.
\end{equation}
This implies that any ergodic component of any quantum limit has
entropy $\geq \frac{1}{9}. $

\begin{cor} \cite{BL} Almost every component of a quantum limit measure has
entropy $\geq 1/9$. The Hausdorff dimenson of the support of the
limit measure is $\geq 1 + 1/9$.

\end{cor}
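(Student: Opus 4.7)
My plan is to derive the Corollary directly from Theorem \ref{BL} by plugging its ball estimate into the Brin--Katok local entropy formula (via the ergodic decomposition \eqref{ERGDEC}), and then to deduce the dimension bound from the entropy bound together with the $g^t$--invariance of $\supp(\mu)$.

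First I would translate the estimate of Theorem \ref{BL} into a Bowen--ball bound. In constant curvature $-1$, the symmetric Bowen ball $B'_T(\rho,r)$ is identified in \eqref{NALINIShyp} with a flow box of transverse radius $re^{-T/2}$ and flow length $r$. Applying Theorem \ref{BL} with $\epsilon=re^{-T/2}$ and $\tau_0=r$ yields
\[
\mu\bigl(B'_T(\rho,r)\bigr)\ \le\ \bigl(re^{-T/2}\bigr)^{2/9}\ =\ r^{2/9}\,e^{-T/9},
\]
uniformly for $\rho$ in a fixed compact set. Taking $-T^{-1}\log$ and passing to $\liminf_{T\to\infty}$ gives
\[
\liminf_{T\to\infty}\,-\frac{1}{T}\log\mu\bigl(B'_T(\rho,r)\bigr)\ \ge\ \frac{1}{9}-\frac{2\log r}{9T}\ \xrightarrow[T\to\infty]{}\ \frac{1}{9},
\]
and then letting $r\to 0$ produces the Brin--Katok local entropy bound $h_{KS}(\mu,\rho)\ge 1/9$ at every such $\rho$.

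Next I would convert this pointwise local--entropy bound into a bound on ergodic components. By the Brin--Katok theorem recalled in the excerpt, the local entropy $h_{KS}(\mu,\rho)$ agrees $\mu$--almost everywhere with the KS entropy of the ergodic component $\mu^{\ecal}_\rho$ through $\rho$. Combined with the previous step (and, in the cofinite case, with a compact exhaustion to deal with the cusp), this forces $h_{KS}(\mu^{\ecal}_\rho)\ge 1/9$ for $\mu$--a.e.\ $\rho$, which is the first assertion of the Corollary.

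Finally, for the Hausdorff dimension of $\supp(\mu)$, I would exploit the $g^t$--invariance: $\supp(\mu)$ is a union of full orbits and hence already has Hausdorff dimension at least $1$ along the flow direction. For the transverse contribution I would fix a small $2$--dimensional transversal $\Sigma$ and observe that a transverse ball of radius $\varepsilon$ in $\Sigma$, thickened by a fixed interval $\tau_0>0$, is comparable to a flow box $xB(\varepsilon,\tau_0)$; Theorem \ref{BL} then gives mass $\le C\varepsilon^{2/9}$, so Frostman's mass--distribution principle forces the transverse Hausdorff dimension of $\supp(\mu)\cap\Sigma$ to be at least $2/9$ (and in particular at least $1/9$), yielding $\dim_H\supp(\mu)\ge 1+1/9$. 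The main obstacles I expect are bookkeeping ones rather than conceptual: ensuring that the constant in Theorem~\ref{BL} is genuinely uniform in $\rho$ across the compact set that carries full $\mu$--measure (so that the $\liminf$ is taken uniformly, as Brin--Katok requires), and in the $\Gamma=PSL(2,\Z)$ case ruling out that a positive fraction of $\mu$ escapes into the cusp before one can invoke the local arguments above.
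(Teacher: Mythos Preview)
Your proposal is correct and matches the paper's own (one-line) argument: the paper simply rewrites Theorem~\ref{BL} as the Bowen-ball decay $\mu(B_T(\rho,\epsilon))\le Ce^{-T/9}$ and invokes Brin--Katok, which is exactly your entropy argument spelled out carefully via the symmetric ball~\eqref{NALINIShyp}. For the Hausdorff-dimension statement the paper gives no proof at all; your direct Frostman route on a transversal is a clean way to extract it, and as you observe it actually yields transverse dimension $\ge 2/9$, hence $\dim_H(\supp\mu)\ge 1+2/9$, of which the stated $1+1/9$ is a weakening.
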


Theorem \ref{BL} is a consequence of the following uniform upper
bound for masses in small tubes around geodesic segments in
configuration space:

\begin{theo}\cite{BL} Let $\Gamma \subset SL(2, \Z)$ be a congruence
subgroup. Let $\mu$ be a quantum limit. Then for any compact
subset $K \subset \Gamma \backslash G$ and any $x \in K$, and for
any Hecke eigenfunction $\phi_{\lambda_j}$,
$$\int_{x B(r
, \tau_0)} |\phi_{\lambda_j}|^2 dV \leq r^{2/9}. $$
\end{theo}

One may rewrite this result in terms of a Riesz-energy of the
quantum limit measures:

\begin{cor} Let $\mu$ be a quantum limit of a sequence of
Hecke-Maass eigenfunctions. Then for $\kappa < 2/9$,
$$\int_M \int_M \frac{d\mu(x) d\mu(y)}{d_M(x, y)^{\kappa + 1}} <
\infty. $$
\end{cor}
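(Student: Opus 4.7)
The strategy is to convert the tube bound of the preceding theorem into a local mass-decay estimate for the quantum limit viewed as a measure on $M$, and then invoke the classical layer-cake identity relating such a bound to convergence of Riesz-type energy integrals. Since the quantum limit $\mu$ naturally lives on $S^*\X\simeq \Gamma\backslash G$, throughout I interpret $d\mu$ in the statement as the push-forward $\bar\mu=\pi_*\mu$ under the bundle projection $\pi:\Gamma\backslash G\to \X$.

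\smallskip

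\emph{From tubes to balls.} By definition $\mu$ is a weak-$*$ limit of the measures $|\phi_{\lambda_j}|^2\,dV$, and the tube bound in the preceding theorem is uniform in $j$. A routine smooth approximation of $\mathbf 1_{xB(r,\tau_0)}$ from below then yields
\[
\mu\bigl(xB(r,\tau_0)\bigr) \;\le\; C\, r^{2/9}
\]
uniformly for $x$ in any fixed compact subset. The Bowen tube $xB(r,\tau_0)$ has the product structure in $\Gamma\backslash G$ of a geodesic segment of length $\tau_0$ times two horocyclic widths of size $r$. Covering a round ball $B_M(y,r)\subset M$ by projections $\pi(xB(r,\tau_0))$ of such tubes (using $\tau_0$ of fixed order), while separately integrating over the $K$-fiber of $\pi$ (which should gain one extra factor of $r$), I expect to upgrade the tube bound to the configuration-space estimate
\[
\bar\mu\bigl(B_M(y,r)\bigr) \;\le\; C\, r^{\,1+2/9},
\]
valid uniformly for $y$ in a fixed compact set and $r$ small enough.

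\smallskip

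\emph{Riesz-energy calculation.} With the ball bound in hand, the corollary reduces to the classical layer-cake identity
\[
\int_M\!\!\int_M \frac{d\bar\mu(x)\,d\bar\mu(y)}{d_M(x,y)^{\kappa+1}} \;=\; (\kappa+1)\int_M\!\!\int_0^\infty s^{-\kappa-2}\,\bar\mu\bigl(B_M(y,s)\bigr)\,ds\,d\bar\mu(y).
\]
The contribution from small $s$ is controlled by $\int_0^1 s^{-\kappa-7/9}\,ds$, convergent precisely when $\kappa<2/9$, while the tail at large $s$ is bounded trivially by compactness of $\X$ together with $\bar\mu(\X)=1$. The main obstacle is the integral-geometric step of the previous paragraph: one must verify that round $M$-balls of radius $r$ really are efficiently covered by projections of Bowen tubes of transverse width $r$, and that integrating out the angular $K$-direction genuinely gains a factor of $r$, upgrading the tube exponent $2/9$ to the ball exponent $1+2/9$. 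Once that geometric step is established, the remainder is routine.
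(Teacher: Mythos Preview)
The paper does not actually supply a proof of this corollary; it is presented as a direct reformulation (``One may rewrite this result in terms of a Riesz-energy\dots''), so your task is really to fill in the omitted step.  Your layer--cake reduction is correct, and you correctly identify that what is needed is a ball-mass bound with exponent $1+2/9$; the arithmetic
\[
\int_0^1 s^{-\kappa-2}\,s^{1+2/9}\,ds<\infty\iff \kappa<2/9
\]
is fine.

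The gap is in the geometric step, and unfortunately your proposed mechanism is backwards.  You suggest pushing the quantum limit down to the surface $\X$ and claim that ``integrating out the angular $K$-direction genuinely gains a factor of $r$.''  In fact it \emph{loses} a factor.  The Bowen tube $xB(r,\tau_0)=x\,a((-\tau_0,\tau_0))\,n_-((-r,r))\,n_+((-r,r))$ has transverse directions $n_+,n_-$ both of size $r$; since the $K$-direction is (infinitesimally) $X_--X_+$, the tube has $K$-extent only $\sim r$.  Hence $\pi^{-1}(B_\X(y,r))$, which carries the full $K$-fiber, requires of order $1/r$ tubes to cover, and the best you get on the surface is $\bar\mu(B_\X(y,r))\le C\,r^{2/9}$, not $r^{1+2/9}$.

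The correct source of the extra factor of $r$ is not the $K$-fiber but the \emph{geodesic-flow invariance} of the quantum limit $\mu$ on $S^*\X\simeq\Gamma\backslash G$.  Work upstairs: a round ball $B(x,r)\subset\Gamma\backslash G$ has flow-direction extent $\sim r$, while the tube has fixed flow-length $\tau_0$.  The translates $g^{2rk}B(x,r)$ for $k=0,1,\dots,\lfloor\tau_0/2r\rfloor$ are pairwise disjoint (they are separated by $2r$ in the flow direction), they all have the same $\mu$-mass by invariance, and they are all contained in a tube $xB(Cr,\tau_0)$ of comparable width.  This gives
\[
\frac{\tau_0}{2r}\,\mu\bigl(B(x,r)\bigr)\ \le\ \mu\bigl(xB(Cr,\tau_0)\bigr)\ \le\ C'\,r^{2/9},
\]
hence $\mu(B(x,r))\le C''\,r^{1+2/9}$.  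Now your layer-cake argument goes through verbatim --- but for $\mu$ on $\Gamma\backslash G$ with the metric $d_{\Gamma\backslash G}$, not for the pushforward $\bar\mu$ on $\X$.  So both your interpretation of $M$ and your mechanism for the missing power of $r$ need to be corrected; once you replace ``$K$-fiber integration'' by ``flow invariance'' and stay on $S^*\X$, the argument is complete.
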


Although these results were superseded by Lindenstrauss' QUE
bound, we briefly sketch the idea of the proof since it makes an
interesting contrast to the Anantharaman-Nonnenmacher bound.  They
prove that there exists a set $W$ of integers of size
$\epsilon^{-2/9}$ so that for $n \in W$ one has (roughly) that the
translates $y B(\epsilon,\tau_0)$ of the small balls by $y $ in
the Hecke correspondence for $T_n$ are all pairwise disjoint.
Since
$$|\phi_{\lambda}(y)|^2 \leq \sum_{z \in T_n(y)}
|\phi_{\lambda}(z)|^2, $$ one has
$$\begin{array}{lll} \int_{x B(\epsilon, \tau_0)}
|\phi_{\lambda}(y)|^2 dV & \leq & C  \int_{x B(\epsilon, \tau_0)}
 \sum_{z \in T_n(y)}
|\phi_{\lambda}(z)|^2 dV \\ && \\ & = &
 \sum_{z \in T_n(x)} \int_{z B(\epsilon, \tau_0)}
|\phi_{\lambda}(z)|^2 dV. \end{array}$$ Now sum over $n \in W$ and
use the disjointness of the small balls $z B(\epsilon, \tau_0)$
and the $L^2$ normalization of the eigenfunction to obtain
$$\begin{array}{lll} \int_{x B(\epsilon, \tau_0)}
|\phi_{\lambda}(y)|^2 dV & \leq & \frac{1}{|W|} \sum _{n \in W}
 \sum_{z \in T_n(x)} \int_{z B(\epsilon, \tau_0)}
|\phi_{\lambda}(z)|^2 dV \\ &&\\
& \leq & C  \epsilon^{2/9} \int_X |\phi_{\lambda}^2 dV \leq C
\epsilon^{2/9}.
\end{array}$$

\subsection{Anantharman and Anantharaman-Nonnenacher lower bound}

We now turn to the  lower bounds on entropy in the general Anosov
case given in \cite{A} and in \cite{AN,ANK}.
 The mechanisms
proving the lower bounds and the results are somewhat different
between the two articles. The result of \cite{A} gives a lower
bound for the topological entropy $h_{top}(supp \nu_0)$ of the
support of  a quantum limit while the main  result of \cite{AN}
gives a lower bound on the metric or Kolmogorov Sinai entropy $h_g
= h_{KS}$ of the quantum limits.  For the definition of
$h_{top}(F)$, see (\ref{htop}).

\begin{theo}\label{A} \cite{A} Let $\nu_0 = \int_{S^* M} \nu_0^x d\nu_0(x)$ be the
ergodic decomposition of $\nu_0$. Then there exists $\kappa > 0$
and two continuous decreasing functions $\tau: [0, 1] \to [0, 1]$
and $\vartheta: (0, 1] \to \R_+$ with $\tau(0) = 1, \vartheta(0) =
\infty$, such that
$$\nu_0 [\left\{ x: h_g(\nu_0^x) \geq \frac{\Lambda}{2}(1 - \delta)
\right\} ] \geq (\frac{\kappa}{\vartheta(\delta)} )^2 (1 -
\tau(\delta)).
$$ Hence

\begin{itemize}

\item  $h_g(\nu_0) > 0$;

\item $h_{top}(\mbox{supp}\; \nu_0) \geq \frac{\Lambda}{2}.$
\end{itemize}
\end{theo}

It follows that a positive proportion of the ergodic components of
$\nu_0$ must have KS entropy arbitrarily close to
$\frac{\Lambda}{2}$.

This is proved as a consequence of a Proposition  which may have
other applications:

\begin{theo} \label{prop}\cite{A} (Proposition 2.0.4) With the same notation as in Theorem \ref{A} and Remark \ref{REMARK}.  Let
 $F$ be a subset with $h_{top}(F) \leq
\frac{\Lambda}{2} (1 - \delta).$ Then,
$$\nu_0 (F) \leq  \left(1 - (\frac{\kappa}{\vartheta(\delta)})^2 \right) (1 -
\tau(\delta)) < 1.
$$
Hence supp $\nu_0$ cannot equal $F$.

\end{theo}

In \cite{A} Theorem 1.1.2,  there is a generalization to
quasi-modes which introduces a constant $c$. For simplicity we
only consider eigenfunctions.

In the subsequent article of Anantharaman-Nonnenmacher,
\cite{AN,ANK} the authors obtain a quantitative lower bound on the
KS entropy:
\begin{theo}\label{thethm} \cite{AN,ANK}
Let $\mu$ be a semiclassical measure associated to the
eigenfunctions of the Laplacian on $M$. Then its metric entropy
satisfies
\begin{equation}\label{e:main1}
h_{KS}(\mu)\geq  \left|\int_{S^*M} \log J^u(\rho)d\mu(\rho)
\right|- \frac{(d-1)}{2} \lambda_{\max}\,,
\end{equation}
where $d=\dim M$.  $\lambda_{\max}=\lim_{|t| \to
\infty}\frac{1}t \log \sup_{\rho\in \cE} |dg^t_\rho|$ is the
maximal expansion rate of the geodesic flow on $\cE$  (\ref{LAMBDAMAX})
and $J^u$ is the unstable Jacobian determinant (\ref{UNSTABLEJAC}).

\end{theo}

When $M$ has constant sectional curvature $-1$, the theorem
implies  that
\begin{equation}\label{e:main2}
h_{KS}(\mu)\geq \frac{d-1}2.
\end{equation}

They state the conjecture
\begin{conj} \cite{AN} Let $(M, g)$ have Anosov geodesic flow. Then for any
quantum limit measure $\nu_0$,
$$h_g(\nu_0) \geq \frac{1}{2} \left| \int_{S^*M} \log J^u(v)
d\nu_0(v) \right|. $$
\end{conj}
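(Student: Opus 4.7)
The plan is to extend the Anantharaman-Nonnenmacher entropic-uncertainty approach that yields Theorem \ref{thethm}, with an eye toward removing the $-\frac{d-1}{2}\lambda_{\max}$ defect and recovering the conjectured bound $\tfrac12|\int \log J^u\,d\mu|$. The overall strategy is to couple a hyperbolic dispersive operator-norm estimate on quantized cylinder operators with the Maassen--Uffink entropic uncertainty principle applied to the eigenfunction itself; the conjecture would require sharpening one or the other of these inputs in a way that removes a fundamentally semiclassical volume factor.

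First I would set up the symbolic calculus. Fix a small time step $\tau > 0$ and a smooth partition of unity $\{\pi_k\}_{k=1}^{K}$ of $S^*M$ of diameter $\epsilon$, quantize each to $P_k := \Op_\hbar(\pi_k)$, and for each admissible sequence $\alpha = (\alpha_0,\dots,\alpha_{n-1})$ form the cylinder operator
$$P_\alpha^{(n)} := P_{\alpha_{n-1}}(\tau(n-1))\cdots P_{\alpha_1}(\tau)P_{\alpha_0},\qquad P_k(t) := U_\hbar^{-t} P_k\, U_\hbar^{t},$$
where $U_\hbar^t = e^{-it\hbar\Delta}$. For an eigenfunction $\psi_\hbar$ the squared norms $p_\alpha^{(n)} := \|P_\alpha^{(n)}\psi_\hbar\|^2$ essentially form a probability vector on symbolic sequences of length $n$, and the key quantum invariant is the Shannon entropy $h_n(\psi_\hbar) := -\sum_\alpha p_\alpha^{(n)}\log p_\alpha^{(n)}$, which in the semiclassical limit will encode the partition entropy of the quantum limit $\mu$.

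The heart of the argument is a two-point operator-norm estimate of the form
$$\bigl\|P_\alpha^{(n)}\,U_\hbar^{-n\tau}\,P_\beta^{(n)}\bigr\| \;\le\; C\,\hbar^{-(d-1)/2}\,\exp\!\Bigl(-\tfrac12\sum_{j=0}^{n-1}\log J^u(g^{j\tau}\rho_{\alpha,\beta})\Bigr),$$
valid for $n$ up to and slightly beyond the Ehrenfest time $T_E$ of \eqref{EHRENTIME}, where $\rho_{\alpha,\beta}$ is a point in the joint support. This estimate is obtained by iterated stationary phase applied to the Hadamard WKB representation \eqref{HD} on the universal cover, using the Anosov splitting to organize the critical set into exponentially many branches indexed by admissible sequences, the contraction along stable manifolds producing the Jacobian factor. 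Inserting this bound into the Maassen--Uffink entropic uncertainty inequality applied to $\psi = \psi_\hbar$ (exploiting that $U_\hbar^{n\tau}\psi_\hbar$ is $\psi_\hbar$ up to phase by \eqref{UNIT}) converts the operator-norm bound into the Shannon-entropy lower bound
$$h_{n}(\psi_\hbar) \;\ge\; \sum_\alpha p_\alpha^{(n)}\sum_{j=0}^{n-1}\log J^u(g^{j\tau}\rho_\alpha) \;-\; (d-1)\,n\tau\,\lambda_{\max} \;-\; O(1).$$
Passing $\hbar \to 0$ along a subsequence realizing $\mu$, using Birkhoff's theorem together with the $g^t$-invariance of $\mu$, dividing by $n\tau$, refining the partition $\epsilon \to 0$, and invoking Kolmogorov--Sinai subadditivity recovers Theorem \ref{thethm}.

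The main obstacle to promoting this to the conjectured bound is precisely the factor $\hbar^{-(d-1)/2}$ in the dispersive estimate, which is sharp in the constant curvature case and reflects the irreducible cost of a semiclassical Fourier integral operator whose Lagrangian has shrunk below the Planck cell in $d-1$ transverse directions. To close the gap I would pursue two avenues. Either (a) replace the rectangular symbolic tubes by adapted anisotropic packets that stretch along the stable foliation so that $P_\alpha^{(n)}$ remains a bona fide pseudodifferential operator, using the contracting dynamics to absorb the $(d-1)$ transverse dimensions into the symbolic calculus rather than pay for them in a volume prefactor; or (b) replace the Maassen--Uffink inequality with a sharpened uncertainty principle that sees the joint unstable/time geometry, controlling interferences across branches instead of applying the triangle inequality to their norms. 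The core difficulty in either approach is that past the Ehrenfest barrier the classical-quantum correspondence is no longer pointwise, and one must reason entirely through wave-group dispersive estimates whose best currently available constants produce exactly the $-\tfrac{d-1}{2}\lambda_{\max}$ loss in variable curvature.
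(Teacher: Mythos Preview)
The statement you are attempting to prove is explicitly labeled a \emph{conjecture} in the paper, not a theorem; the paper does not offer a proof. It states the conjecture (attributed to \cite{AN}), notes that Rivi\`ere \cite{Riv} has proved it for surfaces of negative curvature, and observes that it does not imply QUE since the cat-map counterexamples of \cite{FNB,FN} saturate exactly this bound. There is therefore no ``paper's own proof'' to compare against.

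Your write-up correctly reconstructs the Anantharaman--Nonnenmacher argument that yields Theorem~\ref{thethm}: the cylinder-operator calculus, the hyperbolic dispersive bound of the form $\|P_\alpha U^n P_\beta\| \lesssim \hbar^{-(d-1)/2}\sqrt{J^u_n(\alpha)J^u_n(\beta)}$, the Maassen--Uffink entropic uncertainty inequality, subadditivity, and the semiclassical limit. You also correctly identify the obstruction separating the theorem from the conjecture as the $\hbar^{-(d-1)/2}$ prefactor in the main estimate, which after dividing by the Ehrenfest time produces the unwanted $-\tfrac{d-1}{2}\lambda_{\max}$ defect.

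However, your proposal does not actually close this gap. The two avenues you sketch --- anisotropic packets stretched along stable leaves, or a sharpened uncertainty principle sensitive to interference among branches --- are research directions rather than arguments, and you yourself note that ``the best currently available constants produce exactly the $-\tfrac{d-1}{2}\lambda_{\max}$ loss.'' That is an honest assessment of the state of affairs: in variable curvature and dimension $d\ge 3$ the conjecture was, at the time of the survey, open, and nothing in your outline supplies the missing mechanism. Rivi\`ere's proof in dimension two exploits the fact that there is a single Lyapunov exponent, so the local unstable Jacobian and the maximal expansion rate coincide pointwise along an orbit; this is what allows the defect term to be absorbed. Neither of your proposed avenues reproduces that mechanism or offers a substitute in higher dimension.
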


This conjecture has recently been proved by G. Rivi\`ere for
surfaces of negative curvature \cite{Riv}.

This conjecture does not imply QUE. In fact the counterexamples in
\cite{FNB,FN} to QUE in the case of ``quantum cat maps'' satisfy
the condition of the conjecture. So there is some chance that the
conjecture is best possible for $(M, g)$ with ergodic geodesic
flow.

To the author's knowledge, the corollary that quantum limits in
the Anosov case cannot be pure periodic orbit measures has not
been proved a simpler way that by applying the theorems above. In
\S \ref{FGAMMA} we will go over the proof in that special case and
see how it ties together with mass concentration of eigenfunctions
around hyperbolic closed geodesics in \S \ref{HYP}.

\subsection{Problem with semi-classical estimates and Bowen balls}

As a first attempt to estimate KS entropies of quantum limits, one
might try to study the local entropy formula as in
Bourgain-Lindenstrauss. Let $B_T(\rho, \epsilon)$ be the small
tube around $\rho \in S^*M$, i.e. of length $\epsilon$ in the flow
direction and $e^{- T/2}$ in the stable/unstable directions. As in
the Bourgain-Lindenstrauss bound, one would like to understand the
decay of $\mu_n (B_T(\rho, \epsilon))$ as $T \to \infty$ and $n
\to \infty$. In the brief expository article \cite{AN2}, the
authors give  the  heuristic estimate
\begin{equation}\label{BTEST}  \mu_n(B_T(\rho, \epsilon)) \leq C
\lambda_n^{\frac{d-1}{2}} e^{ - \frac{(d - 1) T}{2}}, \;\;\; d =
\dim M
\end{equation}
on the  local dimensions of the quantum measures for a sequence of
eigenfunctions.  Unlike the uniform Bourgain-Lindenstrauss
estimate, the estimate  is $\lambda_n$-dependent and  is only
non-trivial for times $T
> \log \lambda_n$. But then the ball radius (in the
stable/unstable directions) is $e^{- T} = \lambda_n^{-1/2}$, which
is just below the minimal scale  allowed by the uncertainty
principle (\S \ref{HUP}).  To obtain a useful  entropy bound one
needs  to take $T = 2 \log \lambda_n$ and then $e^{- T} =
\lambda_n^{-1}$, well below the Heisenberg uncertainty threshold.
So although it is attractive, the local entropy is difficult to
use. Recall that Bourgain-Lindenstrauss used the many elements of
the Hecke correspondences to move the small ball around so that it
almost fills out the manifold, and then used the $L^2$
normalization to estimate the total mass. There doesn't seem to
exist a similar mechanism to build up a big mass from the small
balls in the general Anosov case.

\subsection{ \label{REMARK} Some important parameters}

Two important parameters $\kappa, \vartheta$ appear here and in
the statements and proofs in \cite{A}. They represent time scales
relative to the Ehrenfest time:

\begin{itemize}

\item $\kappa$: Semi-classical estimates only work when $n \leq
\kappa |\log \hbar|$, i.e. $\kappa |\log \hbar|$ plays the role of
the Ehrenfest time.

\item $\vartheta$: The main estimate  is only useful when
$h^{-d/2} e^{- n \Lambda/2} << 1$ or $n \geq \vartheta |\log
\hbar|. $

\end{itemize}

\subsection{Cylinder set operators in quantum mechanics}

To get out of the too-small Bowen ball impasse, Anantharaman
\cite{A} and Ananthraman-Nonnenmacher study ``quantum measures''
of {\it cylinder sets} $\ccal$ rather than   small balls. The
emphasis (and results) on quantum cylinder sets is one of the key
innovations in \cite{A}. The classical cylinder sets are not
directly involved in the main estimates of \cite{A,AN}. Rather
they are quantized as cylinder set operators. Then their ``quantum
measure" or quantum entropy is studied. To distinguish
notationally between classical and quantum objects, we put a
$\hat{\cdot}$ on the quantum operator.

To define quantum cylinder set operators, one quantizes the
partition to define a smooth quantum partition of unity
$\hat{P}_k$ by smoothing out the characteristic functions of
$M_k$. Let ${\bf \alpha} = [\alpha_0, \dots, \alpha_{n-1} \in
\Sigma_n$ be a cylinder set in sequence space $\Sigma$. The
corresponding quantum cylinder operator is
\begin{equation} \label{PALPHA} \hat{P}_{{\bf \alpha}} = \hat{P}_{\alpha_{n-1}} (n-1) \hat{P}_{\alpha_n-2}(n-2) \cdots \hat{P}_{\alpha_0}, \end{equation}
where ${\bf \alpha} = [\alpha_0, \dots, \alpha_{n-1}]$  where
\begin{equation} \hat{P}(k) = U^{*k} \hat{P} U^{k}. \end{equation}  Here, $U = e^{i
\sqrt{\Delta}}$ is the propagator at unit time (or in the
semi-classical framework, $U= e^{i \hbar \Delta/2}$ (see \cite{A}
(1.3.4)).

The analogue of the measure of a cylinder set in quantum mechanics
is  given by the matrix element of the cylinder set operator in
the energy state, transported to $\Sigma$. We use the
semi-classical notation of \cite{A}, but it is easily converted to
homogeneous notation.

\begin{defin} \label{MUHDEF} (See \cite{A}, (1.3.4)) Let $\psi_{\hbar}$ be an eigenfunction
of $\Delta$.  Define the associated quantum ``measure" of cylinder
sets $\ccal = [\alpha_0, \dots, \alpha_{n-1}] \in \Sigma_n$ by
\begin{equation} \mu_{\hbar}([\alpha_0, \dots, \alpha_n]) =
\langle \hat{P}_{\alpha_n}(n) \cdots \hat{P}_{\alpha_0}(0)
\psi_{\hbar}, \psi_{\hbar} \rangle. \end{equation}

\end{defin}

Thus, one ``quantizes" the cylinder set $\ccal = [\alpha_0, \dots,
\alpha_{n-1}]$ as the operator
\begin{equation} \label{CCALORD} \hat{\ccal} = U^{-(n-1)} \hat{P}_{\alpha_{n-1}}
U \hat{P}_{\alpha_{n-2}} \cdots U \hat{P}_{\alpha_0}
\end{equation}  For each eigenfunction, one obtains a linear
functional
\begin{equation} \mu_{\hbar}([\alpha_0, \dots, \alpha_{n-1}]) =
\rho_{\hbar} (\hat{\ccal}) \end{equation} in the notation of
states of \S \ref{INVSTATES}. The `quantum measures' are shift
invariant, i.e.
$$\mu_{\hbar}([\alpha_0, \dots, \alpha_{n-1}]) =
\mu_{\hbar}(\sigma^{-1} [\alpha_0, \dots, \alpha_{n-1}]). $$ This
says,
$$\langle U^{-(n-2)} P_{\alpha_{n-1}} U^{n-2} \cdots U P_{\alpha_0}
U^{-1} \phi_{\hbar}, \phi_{\hbar} \rangle = \langle U^{-(n-1)}
P_{\alpha_{n-1}} U^{n-1} \cdots U P_{\alpha_0} \phi_{\hbar},
\phi_{\hbar} \rangle,$$ which is true since $\rho_h(A) = \langle A
\phi_{\hbar}, \phi_{\hbar} \rangle$ is an invariant state.

The matrix element $\langle \hat{P}_{{\bf \alpha}} \psi_{\hbar},
\psi_{\hbar} \rangle$ is the probability (amplitude)  that the
particle in state $\psi_{\hbar}$ visits the phase space regions
$P_{\alpha_0}, P_{\alpha_1}, \dots, P_{\alpha_{n-1}}$ at times $0,
1, \dots, n - 1$.

Thus, the states $\rho_{\hbar}$ or alternatively the Wigner
distributions of $\psi_{\hbar}$ are transported to $\Sigma$ to
define   linear functionals $\mu_{\hbar}$  on the span of the
cylinder functions on $\Sigma$.  They  {\it exactly} invariant
under the shift map.

 The functionals $\mu_{\hbar}$  are not positive measures
since $\hat{\ccal}_{\hbar}$ is not a positive operator. However as
in the proof of the quantum ergodicity Theorem \ref{QE}, one has
the Schwartz inequality,
\begin{equation} |\rho_{\hbar}(\hat{\ccal})|^2 \leq \rho_{\hbar}
(\hat{\ccal}^* \hat{\ccal}), \end{equation} reflecting the
positivity of the state $\rho_{\hbar}.$

\subsection{Main estimate for cylinder sets of $\Sigma$}

The so-called {\it main estimate} of \cite{A} implies that for
every cylinder set $\ccal \in \Sigma_n$, one has
 \begin{equation}\label{MAIN2}  |\mu_{\hbar}(\ccal)| \leq C_{\beta} h^{-d/2} e^{- n
 \Lambda/2}( 1 + O(\epsilon))^n, \;\;\; \mbox{uniformly for }\; n
 \leq \beta  \; |\log \hbar|. \end{equation}
 {\it Here, $\beta$ can be taken arbitrarily large}.
  This
 estimate is similar in spirit to (\ref{BTEST}) but circumvents the uncertainty
 principle.  It is one of  the essential quantum mechanical (or
semi-classical)  ingredient for getting a lower bound on
$h_{top}(supp \nu_0)$ of supports of quantum limit measures (see
\S \ref{TOPENT}).

\subsection{Quantization of cylinder sets versus quantized
cylinder operators}

Let us explain how (\ref{MAIN2}) gets around the uncertainty
principle. A fundamental feature of quantization is that it is not
a homomorphism. Thus, the quantization of the (smoothed)
characteristic function of the cylinder set $P_{\alpha_0} \cap
g^{-1} P_{\alpha_1} \cap \cdots \cap g^{-n} P_{\alpha_n}$ is very
different from $ \hat{P}_{\alpha_{n-1}} U \hat{P}_{\alpha_{n-2}}
\cdots U \hat{P}_{\alpha_0}$, the ordering which occurs in the
quantum cylinder operator $\hat{P}_{{\bf \alpha}}$ in
(\ref{PALPHA}). The problematic Bowen ball estimate (\ref{BTEST})
would arise if one quantized the Bowen ball (or cylinder set) in
the first sense  and then took its matrix element. Quantizing in
the other order makes the main estimate  possible.

\subsection{Quantized
cylinder operators and covering numbers}

As motivation to study quantum measures of cylinder sets,
 Anantharaman observes that the main estimate  suggests an obstruction to a quantum measure
concentrating on a set $F$ with $h_{top}(F) < \frac{\Lambda}{2}. $
For each $n$ such a set can be covered by $C e^{n
(\frac{\Lambda}{2} (1 - \delta)}$ cylinder sets of length $n$. By
the main estimate (\ref{MAIN2}), each of the cylinder sets has
quantum measure  $ |\mu_{\hbar}(\ccal)| \leq C e^{-d/2 \log \hbar
- n
 \Lambda/2} \;\;\; \mbox{uniformly for }\; n
 \leq \kappa \; |\log \hbar|$ for any $\kappa$. As yet, the combination does not
 disprove that supp$\nu_0 = F, $ i.e. $\nu_0(F) = 1$, since the the large factor of $h^{-d/2}$ is not
 cancelled in the resulting estimate
 $\mu_{\hbar}(F) \leq C e^{n  (\frac{\Lambda}{2} (1 - \delta) -d/2 \log \hbar - n
 \Lambda/2}.$ An additional idea is needed to decouple the large time parameter $n$ into independent
 time parameters.
 Roughly speaking, this is done by a sub-multiplicative property.
 In \S \ref{PROOF}, the ideas in the above paragraph are made more precise and effective.

\subsection{\label{TOPENT} Upper semi-continuity, sub-multiplicativity and sub-additivity}

Before getting into the details of the (sketch of) proof, it is
useful to consider why there should  exist a non-trivial lower
bound on $h_{KS}(\nu_0)$ for a quantum limit measure? What
constrains a quantum limit measure to have higher entropy than
some given invariant measure?

First, the classical entropy of an invariant is   upper
semi-continuous with respect to weak convergence.  This is in the
right direction for proving lower entropy bounds for  quantum
limits. If one can obtain a lower bound on the `entropies' of the
quantum measures $\mu_n$, then the entropy of the limit can only
jump up. Of course, the quantum measures are not $g^t$ invariant
measures and so upper semi-continuity does not literally apply.
One needs to define a useful notion of entropy for the quantum
measures and prove some version of upper semi-continuity for it.
The USC property of the classical entropy suggests that there
should exist some analogue on the quantum level.

In \cite{A}, entropies are studied via special  covers by cylinder
sets.  For these, there is a  sub-multiplicative estimate on the
number of elements in the cover.

 In
\cite{AN}, a quite different idea is used: the authors employ a
notion of quantum entropy due to Maassen-Uffink and prove a
certain lower bound for the quantum entropy called the entropy
uncertainty inequality. It is the origin of the lower bound for
the quantum entropies and the KS entropy in Theorem \ref{thethm}  (see \S \ref{EUP}).
  As a replacement for the  USC property of the entropy,
 a certain sub-additivity property  for the quantum entropy is proved (see \S
\ref{SUBA}).

In the next three subsections, we go over the main ingredients in
the proof. Then we give a quick sketch of the full proof.

\subsection{\label{ME} Main estimate in more detail}

The main estimate in \cite{A} (Theorem 1.3.3) was refined in
\cite{AN,ANK}. To state the results, we need some further
notation.  Let $n_E(\hbar)$ denote the Ehrenfest time
$$n_E(\hbar) = \left[ \frac{(1 - \delta) |\log \hbar
|}{\lambda_{\max}} \right] $$ where the brackets denote the
integer part and $\delta$ is a certain small number arising in
energy localization (for the definition of $\lambda_{\max}$ see
(\ref{LAMBDAMAX})). The authors also introduce a discrete
`coarse-grained'
 unstable Jacobian
 $$J_1^u(\alpha_0, \alpha_1) : = \sup \{J^u(\rho): \rho \in T^* \Omega_{\alpha_0} \cap \ecal^{\epsilon}: \; g^t \rho \in T^* \Omega_{\alpha_1} \}, $$
 for $\alpha_0, \alpha_1 = 1, \dots, K$. Here, $\Omega_{j}$ are small open neighborhoods of the partition sets $M_j$. For a sequence
 ${\bf \alpha} = (\alpha_0, \dots, \alpha_{n-1})$ of symbols of length $n$, one defines
 $$J_n^u({\bf \alpha}) : = J_1^u(\alpha_0, \alpha_1) \cdots J_1^u(\alpha_{n-2}, \alpha_{n-1}). $$

\begin{theo} (See Theorem 3.5 of \cite{ANK}) \label{t:main}
Given a partition $\pcal^{(0)}$ and $\delta, \delta'>0$ small
enough, there exists $\hbar_{\cP^{(0)},\delta,\delta'}$ such that,
for any $\hbar\leq \hbar_{\pcal^{(0)},\delta,\delta'}$, for any
positive integer $n\leq n_E(\hbar)$, and any pair of sequences
${\bf \alpha}$, ${\bf \alpha'}$ of length $n$,
\begin{equation}\label{emain}
\left|| \hat{P}_{{\bf \alpha}'}^*\, U^n\, \hat{P}_{{\bf \alpha}}
\Op(\chi^{(n)}) \right||
 \leq C\,\hbar^{-(d-1+c\delta)}\, \sqrt{J^u_n({\bf \alpha}) J^u_n({\bf \alpha}')}\,.
\end{equation}
Here, $d = \dim M$ and the  constants $c$, $C$ only depend on
$(M,g)$.
\end{theo}

There is a more refined version using a sharper energy cutoff in
\cite{AN}.

\begin{theo} \label{t:mainn}
Given a partition $\pcal^{(0)}$, $\kappa > 0$  and $\delta >0$
small enough, there exists $\hbar_{\cP^{(0)},\delta,\kappa}$ such
that, for any $\hbar\leq \hbar_{\pcal^{(0)},\delta,\kappa}$, for
any positive integer $n\leq \kappa |\log \hbar|$, and any pair of
sequences ${\bf \alpha}$, ${\bf \alpha'}$ of length $n$,
\begin{equation}\label{e:main3}
\left|| \hat{P}_{{\bf \alpha}'}^*\, U^n\, \hat{P}_{{\bf \alpha}}
\Op(\chi^{(n)}) \right||
 \leq C\,\hbar^{-(\frac{d-1}{2} +\delta)}\, e^{-(d - 1) n} (1 + O(\hbar^{\delta}))^n\,.
\end{equation}
Here, $d = \dim M$.
\end{theo}

To prove this, one shows  that any state of the form
$\Op(\chi^{(*)})\Psi$  can be decomposed as a superposition of
essentially $\hbar^{-\frac{(d-1)}2}$ normalized Lagrangian states,
supported on Lagrangian manifolds transverse to the stable leaves
of the flow.  The action of the operator $\hat{P}_{{\bf \alpha}}$
on such Lagrangian states is intuitively as follows: each
application of $U$ stretches the Lagrangian in the unstable
direction (the rate of elongation being described by the unstable
Jacobian) whereas each multiplication by $\hat{P}_{\alpha_j}$ cuts
off the  small piece of the Lagrangian in the $\alpha$th cell.
This iteration of stretching and cutting accounts for the
exponential decay. A somewhat more detailed exposition is in \S
\ref{MEEXP}.

This estimate can be reformulated as a statement about matrix
elements of the operators $\hat{P}_{{\bf \alpha}}$ relative to
eigenfunctions (see Theorem 3.1 of \cite{ANK})

\begin{theo}\label{MEGOOD} Let $\hat{P}_k = {\bf 1}_{M_k}^{sm}.$ For any $\kappa > 0$
there exists $\hbar_{\kappa} > 0$ so that, uniformly for $\hbar <
\hbar_{\kappa}$ and all $n \leq \kappa \; |\log \hbar|, $ and for
all ${\bf \alpha} = (\alpha_0, \dots, \alpha_{n-1}) \in [1,
\kappa]^n \cap \Z^n$,
$$||\hat{P}_{{\bf \alpha}}
\psi_h|| \leq 2 (2 \pi \hbar)^{-d/2} e^{- n \frac{\Lambda}{2}} (1
+ O(\epsilon))^n. $$
\end{theo}

The cutoff operator $Op(\chi^{(n)})$ is supported near the energy
surface $\lambda_n^2$, and the eigenfunctions are supported on
their energy surfaces.  Here, $\Lambda$ is the ``smallest"
expansion rate (see  (\ref{LAMBDAnosov})).

We recall that  $\langle \hat{P}_{{\bf \alpha}} \psi_{\hbar},
\psi_{\hbar} \rangle$ is  the probability (amplitude)  that the
particle in state $\psi_{\hbar}$ visits the phase space regions
$P_{\alpha_0}, P_{\alpha_1}, \dots, P_{\alpha_{n-1}}$ at times $0,
1, \dots, n - 1$.  The main estimates show that this probability
decays exponentially fast with $n$, at rate $\frac{\Lambda}{2}$.
However, the decay only starts near the Eherenfest time $n_1 =
\frac{d |\log \hbar|}{\Lambda}$.

\subsection{\label{EUP} Entropy uncertainty principle}

The source of the lower bound for $h_{KS}(\nu_0)$ is most simply
explained in \cite{AN}. It  is based on the   `entropic
uncertainty principle' of Maassen- Uffink. There are several
notions of quantum or non-commutative entropy, but for
applications to eigenfunctions it is important to find one with
good semi-classical properties.

 Let $(\hcal, \langle.,.\rangle)$ be a complex
Hilbert space, and let
$||\psi||=\sqrt{\langle\psi,\psi\rangle}$ denote the associated
norm. The quantum notion of partition is a family
$\pi=(\pi_k)_{k=1,\ldots,\ncal}$  of operators on $\hcal$ such
that $ \sum_{k=1}^{\hcal}\pi_{k}\pi_{k}^{*}=Id.$ If $||\psi||=1$,
the entropy of $\psi$ with respect to the partition $\pi$ is
define by
$$
h_{\pi}(\psi)=-\sum_{k=1}^{\ncal} ||\pi_k^* \psi||^2\log
||\pi_k^*\psi||^2\,.
$$

We note that the quantum analogue of an invariant probability
measure $\mu$ is an invariant state $\rho$, and the direct
analogue of the entropy of the partition would be $\sum
\rho(\pi_{k}\pi_{k}^{*}) \log \rho(\pi_{k}\pi_{k}^{*}). $ If the
state is $\rho(A) = \langle A \psi, \psi \rangle$ then $\rho(\pi_k
\pi_k^*) = ||\pi^*_k \psi||^2. $

 The dynamics is generated by a unitary
operator $\cU$ on $\cH$. We now state a simple version of the
entropy uncertainty inequality of Maasen-Uffink. A more elaborate
version in \cite{A,AN,ANK} gives a lower bound for a certain
`pressure'.

\begin{theo}
\label{t:WEUP}
 For any $\epsilon \geq 0$, for any normalized $\psi\in\cH$,
$$
h_{\pi}\big({\mathcal U} \psi \big) + h_{\pi}\big(\psi\big) \geq -
2 \log c({\mathcal U})\,,
$$
where
$$ c({\mathcal U}) = \sup_{j, k} |\langle e_k, \; {\mathcal U} e_j \rangle|$$
is the supremum of all matrix elements in the orthonormal basis
$\{e_j\}$. In particular, $h_{\pi}(\psi) \geq - \log c({\mathcal
U})$ if $\psi$ is an eigenfunction of ${\mathcal U}$.
\end{theo}

The intuitive idea is that if a unitary matrix has small entries,
then each of its eigenvectors must have large Shannon entropy.

This uncertainty principle is applied to the entropies of the
partitions defined above. For the propagator, we put  ${\mathcal
U} = e^{i T_E \sqrt{\Delta}}$ is the wave operator at the
`Ehrenfest time' $T_E = \frac{\log \lambda}{\lambda_{\max}}$. Or
in the semi-classical framework (where $h = \frac{1}{\lambda}$),
one can use the Hamiltonian is $H = h^2 \Delta$ and  the time
evolution ${\mathcal U} = e^{i n_E(h) h \Delta}$ with $n_E(h) =
\frac{\log \frac{1}{h}}{\lambda_{\max}}. $

Applied to an eigenfunction of $\ucal$, one has
\begin{equation} h_{\pi}(\psi_h) = \sum_{|\alpha| = n_E}
||\hat{P}_{\alpha}^* \psi_h||^2 \log ||\hat{P}_{\alpha}^*
\psi_h||^2,
\end{equation} and obviously
$$ h_{\pi}\big({\mathcal U} \psi \big) + h_{\pi}\big(\psi\big) = 2
h_{\pi}\big( \psi \big). $$ On the right side,
\begin{equation} \label{c} c({\mathcal U}) = \max_{|\alpha| = |\alpha'| = n_E} ||\hat{P}_{\alpha'} U^{n_E} \hat{P}_{\alpha} Op(\chi^{(n_E)})|| \end{equation}
where $\chi^{(n_E)}$ is a very sharp energy cutoff supported in a
tubular neighborhood $\ecal^{\epsilon} := H^{-1}(1 - \epsilon, 1 +
\epsilon)$
  of $\ecal = S^*M$ of width
$2 h^{1 - \delta} e^{n \delta}$ for a given $\delta> 0$.

\subsection{\label{SUBA} Sub-additivity}

Another key component in the proof is that the quantum entropy is
almost sub-additive for $\hbar \leq |\log t|. $

Sub-additivity of the classical KS entropy follows from the
concavity of the function $- x \log x$. It states that the
sequence $\{h_n(\mu, P)\}$ is sub-additive, i.e.
$$h_{n + m} (\mu, P) \leq h_n(\mu, P) + h_m(\mu, P). $$
However, this is false for the quantum measures.

The correct statement is as follows: There exists a function
$R(n_0, \hbar)$ such that $\lim_{\hbar \to 0} R(n_0, \hbar)$ for
all $n_0 \in \Z$ and such that for all $n_0, n \in {\bf N}$ with
$n_0 + n = \frac{(1 - \delta) |\log \hbar|}{\lambda_{\max}}, $ and
for any normalized eigenfunction $\phi_{\lambda}$, one has
\begin{equation} \label{SUBADD} h_{\pcal^{n_0 + n}}(\phi_{\lambda}) \leq
h_{\pcal^{n_0}}(\phi_{\lambda}) + h_{\pcal^{ n}}(\phi_{\lambda}) +
R(n_0, \hbar). \end{equation}

\subsection{Outline of proof of Theorem \ref{thethm}}

We outline the proof from \cite{AN}. Let
\begin{equation} h_{\pcal^{(n)}}(\psi) = - \sum_{|\alpha| = n}
||\hat{P}^*_{\alpha} \psi||^2 \log ||\hat{P}^*_{\alpha} \psi||^2,
\end{equation}
and let
\begin{equation} h_{\tcal^{(n)}}(\psi) = - \sum_{|\alpha| = n}
||\hat{P}_{\alpha} \psi||^2 \log ||\hat{P}
_{\alpha} \psi||^2,
\end{equation}

\begin{enumerate}

\item Let $K = \frac{1 - \delta}{\lambda_{\max}}. $  By the main
estimate and the entropy uncertainty inequality, one has
$$h_{\tcal^{(n)}}(\psi_h) + h_{\pcal^{(n)}}(\psi_h) \geq 2 (d-1) n +
\frac{(d - 1 + 2 \delta)\lambda_{\max}}{(1 - \delta)} n_E(\hbar)+
O(1).
$$

\item Fix $n_0, n_E(\hbar)$. Write $n = q n_0 + r, r \leq n_0$.
Then by sub-additivity,
$$\frac{h_{\pcal^{(n)}}(\psi_h)}{n} \leq
\frac{h_{\pcal^{(n_0)}}(\psi_h)}{n_0} +
\frac{h_{\pcal^{(r)}}(\psi_h)}{n_E(\hbar)} + \frac{R(n_0,
\hbar)}{n_0}. $$

\item As $\hbar \to 0$, this gives
$$\frac{h_{\pcal^{(n_0)}}(\psi_h)}{n_0} + \frac{h_{\tcal^{(n_0)}}(\psi_h)}{n_0}
\geq 2 (d-1) n + \frac{(d - 1 + 2 \delta)\lambda_{\max}}{(1 -
\delta)} n + O(1) - \frac{R(n_0, \hbar)}{2 n_0}
 + O_{n_0}(1/n). $$

\item Now take the limit $\hbar \to 0$. The expressions
$||\hat{P}_{\alpha}\phi_{\lambda}||^2$ and the Shannon entropies
tend to their classical values. Hence, $h_{\pcal^{(n_0)}}(\psi_h)$
tends to the KS entropy of the quantum limit. Thus, the lower
bound implies
$$\frac{h_{n_0}(\mu)}{n_0} \geq d - 1 - \frac{(d - 1 + 2 \delta)
\lambda_{\max}}{2 (1 - \delta)}. $$ Since $\delta, \delta'$ are
arbitrary one can set them equal to zero. Then let $n_0 \to
\infty$ to obtain the KS entropy of the quantum limit.

\end{enumerate}

\subsection{\label{PROOF} Sketch of proof of Theorem \ref{A}  \cite{A}}

The proof of Theorem \ref{A} is less structural than the proof of
Theorem \ref{thethm}. There is no magic weapon like the entropic
uncertainty principle. In its place there is the construction of
special covers by cylinder sets and counting arguments for the
number of elements in the cover. These covering arguments probably
have further applications and (although quantum) seem  more
geometric than the argument based on the entropic uncertainty
principle.

We mainly discuss the statement that $h_{top}(\mbox{supp}\; \nu_0)
\geq \frac{\Lambda}{2}.$  We recall (\S \ref{TOPENTa})  that an
invariant set $F $ is with $h_{top}(F) \leq \frac{\Lambda}{2}(1 -
\delta)$ can be covered by at $C e^{N (\frac{\Lambda}{2} (1 -
\delta)}$ cylinder sets of length $N$. We further recall that, by
the main estimate (\ref{MAIN2}), each of the cylinder sets has
quantum measure  $ |\mu_{\hbar}(\ccal)| \leq C e^{-d/2 \log \hbar
- N
 \Lambda/2} \;\;\; \mbox{uniformly for }\; N
 \leq \vartheta \; |\log \hbar|$ for any $\vartheta$.   We want to combine
 these estimates to show that
 $\nu_0(F) < 1$ for any quantum limit $\nu_0$. But we cannot simply
 multiply the measure
 estimate for  $\ccal $ in the cover times the number of cylinder sets in the cover. The main estimate
 being fixed, the only things left to work with are the covers.

 As will be seen below, one needs to introduce special covers
 adapted to the eigenfunctions. The covering number of such covers has
 a sub-multiplicative property.  We motivate it by going through a
 heuristic proof from \cite{A}. We also add a new step that helps
 explain why special covers are needed. See also \S
 \ref{FGAMMA} where the pitfalls are explored when $F = \gamma$
 (a closed hyperbolic orbit).

\subsubsection{Covers and times, I}

Let $F \subset \Sigma$ be a shift-invariant subset with
$h_{top}(F) \leq \frac{\Lambda}{2} (1 - \delta)$.  Let $W_n
\subset \Sigma_n$ be a  cover of minimal cardinality of $F$ by $n$
cylinder-sets.  Now introduce a new time $N >> n$ and define
\begin{equation} \label{SIGMAN}   \Sigma_N(W_n, \tau) : = \left\{ \begin{array}{l} \{N-
\mbox{cylinders}\; [\alpha_0, \dots, \alpha_{N-1}] \; \mbox{ such
that} \\ \\
\frac{\# \{j \in [0, N - n]: [\alpha_j, \dots, \alpha_{j + n -1}]
\in W_n\}}{N - n + 1} \geq \tau. \end{array}\right.
\end{equation} They correspond to orbits that spend a proportion
$\geq \tau$ of their time in $W_n$. In terms of the (ergodic)
Liouville measure (transported to $\Sigma$), for almost every
point of $S^*M$ (or its image under $I$), the proportion of the
total time its orbit spends in $W_n$ is the relative measure of
(the union of the cylinder sets in)  $W_n$. If this measure is
smaller than $\tau$, then $\Sigma_N(W_n, \tau)$ is a rare event as
$N \to \infty$.  Its cylinders satisfy $\sigma^j(\ccal) \cap W_n =
\ccal$ for a proportion $\tau$ of $0 \leq j \leq N$. If one
imagines partitioning the elements of $W_n$ into $N$ cylinders,
then only a (relatively) sparse subset will go into $\Sigma_N(W_n,
\tau)$. The next Lemma gives an upper bound on that number:

\begin{lem} \label{FCOVER} (see Lemma 2.3.1 of \cite{A})  For all $n_0$ there exist $n \geq n_0$
and $N_0$ so that for $N \geq N_0$ and $\tau \in [0, 1]$, the
  cover $W_{n}$ of
$F$ of minimal cardinality satisfies,
$$\begin{array}{lll} \# \Sigma_N(W_{n}, \tau) & \leq &  e^{N \frac{3 \Lambda \delta}{8}} e^{N h_{top}(F) }  e^{(1 - \tau) N(1 + n) \log \ell}.
\end{array}
$$ where $\ell$ is the number of elements in the partition.
\end{lem}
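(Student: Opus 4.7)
The plan is to combine a shift-pigeonhole decomposition with direct combinatorial counting. Since $W_n$ is a minimal cover of $F$ by $n$-cylinders, the definition of topological entropy gives $\#W_n \leq \exp(n(h_{top}(F) + \eta))$ for any prescribed $\eta > 0$, provided $n \geq n_0(\eta)$ is large enough; I fix $\eta = \Lambda\delta/8$ and choose $n$ accordingly. The key first step is to convert the overlapping-window condition in the definition of $\Sigma_N(W_n,\tau)$ into a non-overlapping block decomposition. For each shift $s \in \{0,1,\ldots,n-1\}$, partition $[0,N-1]$ into disjoint length-$n$ blocks $B_i^s := \{s+in,\ldots,s+(i+1)n-1\}$, $0 \leq i < M_s$, where $M_s = \lfloor (N-s)/n \rfloor$, plus boundary remainders of total length $< 2n$. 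Call $B_i^s$ \emph{good} for $\alpha$ if $[\alpha_{s+in},\ldots,\alpha_{s+(i+1)n-1}] \in W_n$. Because every starting position $j \in [0,N-n]$ admits a unique representation $j = s+in$, summing the number of good blocks over all $n$ shifts recovers exactly the total count of $W_n$-windows of $\alpha$, which is at least $\tau(N-n+1)$. By pigeonhole, some shift $s^* = s^*(\alpha)$ admits at least $G := \lceil \tau(N-n+1)/n \rceil$ good blocks.

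The second step is enumeration. Every $\alpha \in \Sigma_N(W_n,\tau)$ is determined by: (i) the shift $s^* \in \{0,\ldots,n-1\}$; (ii) a subset $\mathcal{I} \subset \{0,\ldots,M_{s^*}-1\}$ of good block indices with $|\mathcal{I}| \geq G$; (iii) a choice of cylinder $w_i \in W_n$ for each $i \in \mathcal{I}$; and (iv) free symbols from the $\ell$-letter alphabet on bad blocks and on the boundary remainder. Since $\#W_n \leq \ell^n$, the worst case is $|\mathcal{I}| = G$, so
\begin{equation*}
\#\Sigma_N(W_n,\tau) \;\leq\; n \cdot 2^{M_{s^*}} \cdot (\#W_n)^{G} \cdot \ell^{\,n(M_{s^*}-G)+2n}.
\end{equation*}
Using $nG \leq N$, $n(M_{s^*}-G) \leq N - \tau(N-n+1) \leq (1-\tau)N + n$, $M_{s^*} \leq N/n$, and the entropy bound on $\#W_n$, this gives
\begin{equation*}
\#\Sigma_N(W_n,\tau) \;\leq\; n \cdot 2^{N/n} \cdot e^{N(h_{top}(F) + \Lambda\delta/8)} \cdot \ell^{(1-\tau)N + 3n}.
\end{equation*}
Since the target exponent contains the generous term $(1-\tau)Nn \log \ell$ (which dominates $3n\log \ell$ when $\tau<1$) and the prefactor $e^{3N\Lambda\delta/8}$ absorbs the remaining lower-order terms $\log n$, $(N/n)\log 2$, and $N\Lambda\delta/8$ once $N \geq N_0$ is large enough, the claimed inequality follows.

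The main obstacle is the bookkeeping of quantifiers: the entropy estimate $\#W_n \leq e^{n(h_{top}(F)+\eta)}$ holds only for $n \geq n_0(\eta)$, and absorbing the combinatorial constants into the $e^{3N\Lambda\delta/8}$ prefactor requires $N$ large relative to $n$, which forces the specific order "$\forall n_0\ \exists n \geq n_0, N_0$" in the statement. One must also be mildly careful in the boundary case $\tau = 1$, where the saving $(1-\tau)Nn\log \ell$ vanishes and one relies on $N$-largeness alone to absorb the $O(n)$ and $O(N/n)$ terms. The pigeonhole step itself is short, but it is the essential device that converts a condition on overlapping length-$n$ windows into a clean non-overlapping block decomposition that can be counted in terms of $\#W_n$.
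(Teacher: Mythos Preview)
Your argument is correct and is the standard block-counting proof of this type of lemma. The survey paper does not supply its own proof here; it simply cites Lemma~2.3.1 of \cite{A}, and your pigeonhole-on-shifts decomposition followed by direct enumeration is exactly the mechanism behind Anantharaman's original argument. A couple of minor remarks: your claim $nG \le N$ is off by at most one (one actually gets $nG \le N+1$), but this is harmlessly absorbed; and the uniformity in $\tau$ near $\tau=1$ indeed relies on the slack $e^{N\Lambda\delta/4}$ rather than on the $(1-\tau)Nn\log\ell$ term, which requires $n$ large enough that $(\log 2)/n < \Lambda\delta/4$ in addition to the entropy threshold --- you note this, and it is precisely why the quantifiers must be ordered ``$\forall n_0\,\exists n\ge n_0,\,N_0$''.
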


If $\tau$ is chosen sufficiently close to $1$, this implies:

\begin{cor} \label{FCOVERb}  (See (2.3.1)  of \cite{A}) If $F$ is an invariant
set with  $h_{top}(F) < \frac{\Lambda}{2}(1 - \delta)$, then
$$\# \Sigma_N(W_{n}, \tau) \leq e^{N (\frac{\Lambda}{2} (1 -
\delta))} e^{(1 - \tau) N(1 + n) \log \ell}. $$  \end{cor}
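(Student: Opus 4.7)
The corollary is essentially a repackaging of Lemma \ref{FCOVER}: the plan is to collapse the first two exponential factors in the lemma's conclusion into the single exponential $e^{N\frac{\Lambda}{2}(1-\delta)}$, using the strict inequality $h_{top}(F) < \frac{\Lambda}{2}(1-\delta)$ to supply the necessary slack that absorbs the factor $e^{N\frac{3\Lambda\delta}{8}}$.

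Concretely, I would proceed as follows. Set $\epsilon := \frac{\Lambda}{2}(1-\delta) - h_{top}(F) > 0$, and apply Lemma \ref{FCOVER} not with $\delta$ itself but with a smaller parameter $\delta' = \delta'(\epsilon,\delta)$ chosen so that $\frac{3\Lambda \delta'}{8} \leq \epsilon$; any $\delta' \leq \frac{8\epsilon}{3\Lambda}$ works. The lemma then furnishes, for every sufficiently large $n_0$, some $n \geq n_0$, some $N_0$, and a minimal cover $W_n$ of $F$ such that for all $N \geq N_0$ and all $\tau \in [0,1]$,
\begin{equation*}
\# \Sigma_N(W_n, \tau) \leq e^{N \frac{3\Lambda\delta'}{8}} \, e^{N\,h_{top}(F)}\, e^{(1-\tau) N(1+n)\log \ell}.
\end{equation*}
Multiplying the first two exponentials and using the defining property of $\delta'$,
\begin{equation*}
e^{N \frac{3\Lambda\delta'}{8}} \cdot e^{N\, h_{top}(F)} \;\leq\; e^{N(h_{top}(F) + \epsilon)} \;=\; e^{N \frac{\Lambda}{2}(1-\delta)},
\end{equation*}
which, combined with the untouched third factor, yields the bound claimed in the corollary.

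The main obstacle is essentially a bookkeeping one, namely confirming that the integer $n$ delivered by Lemma \ref{FCOVER} at the smaller parameter $\delta'$ remains compatible with the intended downstream use --- in particular, that one can take $n$ simultaneously large enough for $W_n$ to realize the minimum-cardinality cover entering the later Brin--Katok type estimates, yet small enough relative to $N$ that the correction $e^{(1-\tau)N(1+n)\log \ell}$ stays negligible once $\tau$ is taken close to $1$. Since Lemma \ref{FCOVER} asserts existence of such $n$ for arbitrarily large thresholds $n_0$, these constraints can be met simultaneously; once this coordination is verified, the corollary is immediate.
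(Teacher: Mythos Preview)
Your proposal is correct and, in fact, is more careful than the paper's one-line justification. The survey merely says ``If $\tau$ is chosen sufficiently close to $1$, this implies:'' before stating the corollary, but that remark is really anticipating the \emph{next} inequality (the passage to \eqref{VARBOUND}), not the deduction of Corollary~\ref{FCOVERb} from Lemma~\ref{FCOVER}; the corollary's bound retains the factor $e^{(1-\tau)N(1+n)\log\ell}$ unchanged, so taking $\tau$ near $1$ does nothing here. What is actually needed is precisely your observation: since Lemma~\ref{FCOVER} holds for every small auxiliary parameter (what you call $\delta'$), one may shrink it until $\frac{3\Lambda\delta'}{8}$ fits inside the gap $\epsilon = \tfrac{\Lambda}{2}(1-\delta) - h_{top}(F)>0$, whereupon the first two exponentials in the lemma collapse to $e^{N\frac{\Lambda}{2}(1-\delta)}$. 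Your closing remark about the compatibility of the choice of $n$ with the downstream estimates is also the right thing to check, and the freedom ``for all $n_0$'' in the lemma is exactly what guarantees it.
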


We now re-write this estimate in terms of the semi-classical
parameter $\hbar$. If $\tau$ is sufficiently close to $1$ and
$\varepsilon
> 0$ is sufficiently small, there exists   $\vartheta
> 0 $ so that for $N \geq \vartheta |\log \hbar|,$
\begin{equation} \label{VARBOUND} e^{N (\frac{\Lambda}{2} (1 -
\delta))} e^{(1 - \tau) N(1 + n) \log \ell} <  (1 - \varepsilon)
h^{d/2} e^{N \Lambda/2}. \end{equation} Indeed,  let  $\alpha = -
\delta \frac{\Lambda}{2} + (1 - \tau) (1 + n) \log \ell$. For
$\tau$ very close to $1$ it is negative. We then  choose
$\vartheta$ so that $h^{- \alpha \vartheta}  < (1 - \varepsilon)
h^{d/2}$, i.e. so that $| \alpha \vartheta| > d/2. $

By the main estimate (\ref{MAIN2}) and (\ref{VARBOUND}), we obtain
the following

\begin{cor}\label{COR} (see \cite{A}, (2.0.1))

For $\tau$ sufficiently close to $1$ and   $\varepsilon $ close to
zero, there exists $\vartheta > 0$ so that for  $N \geq \vartheta
|\log \hbar|,$
\begin{equation}\label{USED}  |\mu_{\hbar} (\Sigma_N(W_n, \tau))| \leq 1 -
\varepsilon. \end{equation}

\end{cor}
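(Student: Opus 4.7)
\noindent\textbf{Proof plan for Corollary \ref{COR}.} The plan is to decompose the functional $\mu_\hbar$ over the cylinder sets that constitute $\Sigma_N(W_n,\tau)$, bound each individual cylinder contribution via the main estimate (\ref{MAIN2}), count cylinders via the combinatorial Corollary \ref{FCOVERb}, and finally invoke the regime (\ref{VARBOUND}) that was set up precisely to compensate one exponential by the other. Since $\mu_\hbar$ is merely a continuous linear functional on the space of cylinder functions (not a positive measure), we first write
\[
\mu_\hbar\bigl(\Sigma_N(W_n,\tau)\bigr) \;=\; \sum_{\ccal \,\in\, \Sigma_N(W_n,\tau)} \mu_\hbar(\ccal),
\]
and apply the triangle inequality to obtain the crude but serviceable majorant
\[
\bigl|\mu_\hbar\bigl(\Sigma_N(W_n,\tau)\bigr)\bigr| \;\leq\; \#\Sigma_N(W_n,\tau)\,\cdot\,\sup_{\ccal \in \Sigma_N(W_n,\tau)} |\mu_\hbar(\ccal)|.
\]

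Next, I would apply the main estimate (\ref{MAIN2}) to the individual $N$-cylinders. The key point is that (\ref{MAIN2}) allows the length parameter to be taken as large as $\beta|\log \hbar|$ for \emph{any} prescribed $\beta$, so if I first fix $\beta$ sufficiently large and then choose $\vartheta \leq \beta$, the estimate is in force uniformly for every $\ccal \in \Sigma_N(W_n,\tau)$ throughout the window $\vartheta |\log \hbar| \leq N \leq \beta |\log \hbar|$. This yields
\[
\sup_{\ccal} |\mu_\hbar(\ccal)| \;\leq\; C_\beta\, \hbar^{-d/2}\, e^{-N\Lambda/2}\bigl(1+O(\epsilon)\bigr)^N.
\]
Combining this with Corollary \ref{FCOVERb}, which applies to the minimal cover $W_n$ once $n \geq n_0$ and $N \geq N_0$ are suitably large, gives
\[
\bigl|\mu_\hbar\bigl(\Sigma_N(W_n,\tau)\bigr)\bigr| \;\leq\; C_\beta\, \hbar^{-d/2}\, e^{-N\Lambda/2}\bigl(1+O(\epsilon)\bigr)^N \cdot e^{N\frac{\Lambda}{2}(1-\delta)}\,e^{(1-\tau)N(1+n)\log \ell}.
\]

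The final step is purely arithmetic: invoke the inequality (\ref{VARBOUND}), which ensures that once $\tau$ is taken close enough to $1$ (so that the exponent $\alpha := -\delta\Lambda/2 + (1-\tau)(1+n)\log \ell$ is strictly negative) and $\vartheta$ is taken large enough that $|\alpha|\vartheta > d/2$, the product $e^{N\frac{\Lambda}{2}(1-\delta)}\,e^{(1-\tau)N(1+n)\log \ell}$ is bounded by $(1-\varepsilon)\hbar^{d/2} e^{N\Lambda/2}$ for $N \geq \vartheta|\log \hbar|$. The two exponentials in $N\Lambda/2$ then cancel, the factor $\hbar^{-d/2}$ is absorbed against $\hbar^{d/2}$, and one is left with $(1-\varepsilon)$.

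\noindent\textbf{Main obstacle.} The only delicate point is the interlocking choice of the many small parameters. One must fix $\delta$ (the defect in topological entropy of $F$), choose $\tau$ close to $1$ so that the combinatorial factor $(1-\tau)(1+n)\log\ell$ is dominated by $\delta\Lambda/2$, choose $n$ at least as large as the $n_0$ produced by Lemma \ref{FCOVER}, then pick $\beta$ large (to legalize the main estimate on the full window), and finally pick $\vartheta \leq \beta$ large enough that the negative exponential in $N$ beats the $\hbar^{-d/2}$ prefactor with a little room to spare for the $(1+O(\epsilon))^N$ error. The constant $C_\beta$ and the slowly growing factor $(1+O(\epsilon))^N$ are absorbed by enlarging $\vartheta$ by an amount depending on $\beta$ and $\epsilon$; this is possible precisely because the exponent $\alpha$ is strictly negative, giving room to spend. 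No dynamical input beyond the main estimate and the cover counting is needed at this stage — the corollary is the clean quantitative consequence of combining those two ingredients in the regime $N \asymp |\log \hbar|$.
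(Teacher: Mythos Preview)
Your proposal is correct and follows exactly the approach the paper takes: the paper states the Corollary as an immediate consequence of combining the main estimate (\ref{MAIN2}) with the counting bound of Corollary~\ref{FCOVERb} via the arithmetic inequality (\ref{VARBOUND}), and you have simply spelled out that one-line deduction in full detail, including the triangle-inequality decomposition over cylinders and the bookkeeping of parameters. Your discussion of the ``main obstacle'' --- the order in which $\delta,\tau,n,\beta,\vartheta$ must be chosen so that the $(1+O(\epsilon))^N$ and $C_\beta$ factors are absorbed --- is exactly the care that the terse statement in the paper sweeps under the rug.
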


 Here,   $\mu_h (\Sigma_N(W_n, \tau)) $
 means $\mu_h (\bigcup_{\ccal \in
\Sigma_N(W_n, \tau))} \ccal)$.

\subsubsection{Heuristic Proof}

We now present a heuristic proof of Theorem \ref{A} from  Section
2 of \cite{A}  that explains the need for some rather technical
machinery introduced below.

We recall that $\mu_{\hbar}$ is a shift  ($\sigma$)-invariant
measure on $\Sigma$. Let us temporarily pretend that $\mu_{\hbar}$
is also a positive measure. Then we would have for $N \geq
\vartheta |\log \hbar|,$

\begin{equation} \label{BIGa} \begin{array}{lll} |\mu_{\hbar}(W_n)|
&=  &  |\frac{1}{N - n} \sum_{k = 0}^{N - n - 1}
\mu_{\hbar} (\sigma^{-k} W_n)|  \\ && \\
& = & \left| \mu_{\hbar} \left(\frac{1}{N - n} \sum_{k = 0}^{N - n
- 1} {\bf 1}_{\sigma^{-k} W_n} \right) \right|
\\ && \\ &\leq & \sum_{\ccal \in \Sigma_N(W_n, \tau)}
\mu_{\hbar}(\ccal) + \tau \sum_{\ccal \notin \Sigma_N(W_n, \tau)}
\mu_{\hbar}(\ccal)  \\ && \\ & = & \mu_{\hbar}( \Sigma_N(W_n,
\tau)) + \tau \mu_{\hbar}( \Sigma_N(W_n,
\tau)^c)  \\ && \\
& \leq & (1 - \tau) \left(1 - \varepsilon \right) + \tau.
\end{array} \end{equation}
Here we used (\ref{USED}), and that the sum of the $\mu_h$
measures of $ \Sigma_N(W_n, \tau)$ and its complement add to one.
Since $W_n$ is fixed, the weak convergence $\mu_{\hbar} \to \mu_0$
implies,
\begin{equation}\begin{array}{lll}  |\mu_0(W_n)| & \leq &(1 - \tau) \left(1 - \varepsilon) \right) + \tau < 1. \end{array}
\end{equation} Since $F \subset W_n$, the same estimate
applies to $F$. So $F$ cannot be the support of $\mu_0$. ``QED"

 In the above steps,  we use that $\sigma^{-k}
W_n \subset \Sigma_N(W_n, \tau)$ for $k \leq N - n - 1$ and that
\begin{equation} \label{FACTS} \frac{1}{N - n} \sum_{k = 0}^{N - n - 1} {\bf 1}_{\sigma^{-k}
W_n} \leq 1, \;\;\; \frac{1}{N - n} \sum_{k = 0}^{N - n - 1} {\bf
1}_{\sigma^{-k} W_n}  \leq \tau \;\; \mbox{on}\; \Sigma_N(W_n,
\tau)^c. \end{equation}

\begin{rem} There are two very innovative ideas in this argument.
It is somewhat reminiscent of the averaging argument of Theorem
\ref{QE}, but there is no averaging over eigenvalues and no appeal
to ergodicity (since there is no reason why a weak* limit of
$\mu_{\hbar}$ should be an ergodic measure).  The first idea is to
study the special observables  $\hat{P}_{{\bf \alpha}}$. They are
not expressed in the form $Op_{\hbar}(a)$, i.e. in terms of a
symbol, and it would be counter-productive to do so. Thus, the
argument of (\ref{BIG})  takes place entirely on the quantum level
and does not make use of any properties of the weak* limit (which
are unknown); the semi-classical limit is taken after the
inequality is proved.  The key innovation is the third step, where
the sets $\Sigma_N(W_n, \tau)$ are introduced and the sum is
broken up into one over $\Sigma_N(W_n, \tau)$ and one over its
complement. As is evident from (\ref{FACTS}), $\Sigma_N(W_n,
\tau)^c$ arises naturally as an approximation to the
$\tau$-sublevel set of $\frac{1}{N - n} \sum_{k = 0}^{N - n - 1}
{\bf 1}_{\sigma^{-k} W_n}$. But note again that the the
decomposition occurs in $\Sigma_N$ and is used the on the quantum
level. This is very different from  using symbols of the $\hat{C}$
and their time averages to to define the corresponding  subsets in
$S^*M$!
\end{rem}

The flaw in the argument is in the third line, where we pretended
that $\mu_{\hbar}$ was a positive measure.  We recall that
$\mu_{\hbar}(\ccal)$ converges to $\mu_0(\ccal)$, which is a
positive measure. But this is for a fixed $\ccal$. But it is also
close to being a probability measure for cylinder sets of length
$\leq \kappa |\log \hbar|$. But (\ref{USED}) is only valid for $N
\geq \vartheta |\log \hbar|$. Since $\vartheta > \kappa$, one
cannot use them both simultaneously (see \S \ref{REMARK} for
$\kappa, \vartheta$)

One can attempt to get around this problem using the positivity of
the underlying state $\rho_{\hbar}$. Define the function
$$W_n^N : = \frac{1}{N - n} \sum_{k = 0}^{N - n
- 1} {\bf 1}_{\sigma^{-k} W_n}: \Sigma \to \R_+. $$ Then in the
third line we rigorously have,
\begin{equation} | \mu_{\hbar}(W_n^N )| \leq
\rho_{\hbar}\left( (\hat{W}_n^N)^* (\hat{W}_n^N)\right)^{1/2} = ||
\hat{W}_n^N  Op_{\hbar} \psi_{\hbar}||.
\end{equation}
Here, $\hat{W}_n^N$ is the quantization as a cylinder set
operator.  We further denote by $\widehat{\Sigma_N(W_n, \tau)}$
the operator equal to the sum of $\hat{\ccal}$ for $\ccal \in
\Sigma_N(W_n, \tau)$. Then
$$\widehat{\Sigma_N(W_n, \tau)} + \widehat{\Sigma_N(W_n, \tau)^c}
= I. $$ So (with $Op_{\hbar}(\chi)$ the energy cutoff),
\begin{equation} ||
\hat{W}_n^N  Op_{\hbar}(\chi) \psi_{\hbar}|| \leq || \hat{W}_n^N
\widehat{\Sigma_N(W_n, \tau)} Op_{\hbar}(\chi) \psi_{\hbar}|| + ||
\hat{W}_n^N \widehat{\Sigma_N(W_n, \tau)^c} Op_{\hbar}(\chi)
\psi_{\hbar}||. \end{equation} By (\ref{FACTS}),
$$\left\{ \begin{array}{l} ||
\hat{W}_n^N \widehat{\Sigma_N(W_n, \tau)} Op_{\hbar}(\chi)
\psi_{\hbar}|| \leq ||  \widehat{\Sigma_N(W_n, \tau)}
Op_{\hbar}(\chi) \psi_{\hbar}|| \\ \\ || \hat{W}_n^N
\widehat{\Sigma_N(W_n, \tau)^c} Op_{\hbar} (\chi) \psi_{\hbar}||
\leq \tau || \widehat{\Sigma_N(W_n, \tau)^c} Op_{\hbar}(\chi)
\psi_{\hbar}||.
\end{array}. \right.$$

To complete the proof of the final equality, need an estimate of
the form,
\begin{equation} ||  \widehat{\Sigma_N(W_n, \tau)}
Op_{\hbar}(\chi)
\psi_{\hbar}||  \leq 1 - \theta.  \end{equation} We also need
\begin{equation} ||
\widehat{\Sigma_N(W_n, \tau)^c} Op_{\hbar} \psi_{\hbar}|| \leq 1 -
|| \widehat{\Sigma_N(W_n, \tau)} Op_{\hbar} \psi_{\hbar}|| +
O(\hbar).
\end{equation}
The latter holds formally since $\widehat{\Sigma_N(W_n, \tau)},
\widehat{\Sigma_N(W_n, \tau)^c}$ are semi-classically
complementary projections. But the time $N \geq \vartheta |\log
\hbar|$ may be   too large for this semi-classical estimate.

The  heuristic  arguments are sufficiently convincing to motivate
a difficult technical detour in which a  certain
sub-multiplicativity theorem is used to reduce the $N$ in the
argument from $N \geq \vartheta |\log \hbar|$ (above the Ehrenfest
time) to $N \leq \kappa |\log \hbar|$ (below the Ehrenfest time).
It is the analogue in \cite{A} of the sub-additivity step in
\cite{AN}, and its purpose is to allow the argument above to use
only cylinder sets of length $\kappa |\log \hbar|$ (or balls of
radius $>> \hbar^{1/2}$) where semi-classical analysis is
possible. But one has to make sure that the covering number
estimates in  Lemma \ref{FCOVER} and (\ref{VARBOUND}) do not break
down. One has to choose special covers so that  there are not too
many elements in the cover of $F$.  This step is another crucial
innovation in \cite{A}, and in some ways the subtlest one.

\subsubsection{$(\hbar, 1- \theta, n)$-covers}

The sub-multiplicativity step requires a new concept and a new
parameter $\theta$. Fix $(\hbar, n, \theta)$ and consider a subset
$W \subset \Sigma_n$.

\begin{defin} Say that $W$ is an $(\hbar, 1 - \theta, n)-\; \mbox{cover of
} \;\; \Sigma$ if
\begin{equation}\label{CCALSUM1}  || \sum_{\ccal \in W^c} \hat{\ccal}_{\hbar}
Op_{\hbar}(\chi) \psi_{\hbar} || \leq \theta. \end{equation}
\end{defin}
Here, $W^c$ is the complement of $W$; $W$ is an $(\hbar, 1 -
\theta, n)$ cover of $\Sigma$ if in this quantum sense the measure
of $W^c$ is $< 1$ (it is called ``small" in \cite{A} but in the
end small means $< 1$; See the Remark at the end of \S \ref{FINAL}
).  In other words, the union of the cylinder sets of $W$ might
not cover all of $\Sigma$ (think $S^*M$), but the mass of the
eigenfunctions $\psi_{\hbar}$ outside of the covered  part is
rather small. In this quantum sense, it is intuitively a union of
cylinder sets whose measure is $\geq 1 - \theta$. By definition,
if $W$ is an $(\hbar, 1 - \theta, n)-$ cover, then it is  an
$(\hbar, 1 - \theta', n)-$ cover for any $\theta' > \theta$.

 The covering number of such a cover
is defined by
\begin{equation} \label{NhDEF} N_{\hbar}(n, \theta) = \#\{W: \;\; W\; \mbox{is a}\; (\hbar, 1 - \theta, n)-\; \mbox{cover
of } \;\; \Sigma\}. \end{equation}  The sub-multiplicative
property is

\begin{lem} \label{SUBM} (sub-multiplicativity)
\begin{equation} N_{\hbar}\left(kn, k \theta (1 + O(n \hbar^{\alpha}))
\right) \leq N_{\hbar}(n, \theta)^k. \end{equation}
\end{lem}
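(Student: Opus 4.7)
The plan is to construct, from an optimal cover of length $n$, a concatenated cover of length $kn$ whose cardinality is the $k$-th power. Let $W\subset \Sigma_n$ be a cover realizing $N_\hbar(n,\theta)$, and write $\Omega_S:=\sum_{\ccal\in S}\hat{\ccal}_{\hbar}$ for $S\subset \Sigma_n$, so that by hypothesis $\|\Omega_{W^c}\,Op_{\hbar}(\chi)\psi_\hbar\|\le \theta$. Define the concatenated family
\begin{equation*}
W^{(k)}:=\bigl\{[\alpha_0,\dots,\alpha_{kn-1}]\in\Sigma_{kn}\ :\ [\alpha_{jn},\dots,\alpha_{(j+1)n-1}]\in W\text{ for }0\le j\le k-1\bigr\},
\end{equation*}
whose cardinality is $(\#W)^k=N_\hbar(n,\theta)^k$. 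It suffices to prove that $W^{(k)}$ is an $(\hbar,\,1-k\theta(1+O(n\hbar^\alpha)),\,kn)$-cover of $\Sigma$.

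The first step is an ordered factorization of the cylinder operators associated to $W^{(k)}$. From the definition (\ref{CCALORD}) and the Heisenberg identity $\hat{P}(s+t)=U^{-s}\hat{P}(t)U^s$, grouping the $kn$ symbols into $k$ consecutive blocks of length $n$ and absorbing interior $U^n$ shifts yields
\begin{equation*}
\hat{\ccal}_{[\alpha_0,\dots,\alpha_{kn-1}]}=\prod_{j=k-1}^{0}U^{-jn}\,\hat{\ccal}_{[\alpha_{jn},\dots,\alpha_{(j+1)n-1}]}\,U^{jn}.
\end{equation*}
Summing over $W^{(k)}$ and using distributivity produces
\begin{equation*}
\Omega_{W^{(k)}}=\tilde{\Omega}_W^{(k-1)}\cdots \tilde{\Omega}_W^{(1)}\tilde{\Omega}_W^{(0)},\qquad \tilde{\Omega}_W^{(j)}:=U^{-jn}\,\Omega_W\, U^{jn}.
\end{equation*}
Since the smooth partition of unity gives $\Omega_W+\Omega_{W^c}=I$, and thus $\tilde{\Omega}_{W^c}^{(j)}=I-\tilde{\Omega}_W^{(j)}$, the standard telescoping identity produces
\begin{equation*}
\Omega_{(W^{(k)})^c}=I-\Omega_{W^{(k)}}=\sum_{j=0}^{k-1}\tilde{\Omega}_W^{(k-1)}\cdots \tilde{\Omega}_W^{(j+1)}\,\tilde{\Omega}_{W^c}^{(j)}.
\end{equation*}

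The second step is to apply this identity to $Op_{\hbar}(\chi)\psi_\hbar$ and bound each of the $k$ summands by $\theta(1+O(n\hbar^\alpha))$. Two quantum-mechanical facts are crucial: the energy cutoff $Op_{\hbar}(\chi)$ commutes with $U$ (it is a function of the Hamiltonian), and $U^m\psi_\hbar$ is a phase multiple of $\psi_\hbar$. Hence the rightmost factor $\tilde{\Omega}_{W^c}^{(j)}$ in the $j$-th summand, applied to $Op_{\hbar}(\chi)\psi_\hbar$, reduces after commuting the time-shifts through the cutoff and onto the eigenfunction: modulo an Egorov remainder, this produces the vector $\Omega_{W^c}\,Op_{\hbar}(\chi)\psi_\hbar$ multiplied by a phase, whose norm is at most $\theta$ by assumption on $W$.

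The main obstacle, and the source of the factor $(1+O(n\hbar^\alpha))$, is preventing the $k-1-j$ intermediate copies of $\tilde{\Omega}_W$ from compounding: a crude operator-norm bound $\|\tilde{\Omega}_W\|\le 1+\theta$ would yield an unacceptable $(1+\theta)^{k-j}$. The remedy is to iterate the same eigenfunction-plus-telescope trick on each intermediate factor: writing each $\tilde{\Omega}_W=I-\tilde{\Omega}_{W^c}$ and commuting the associated time-shifts past $Op_{\hbar}(\chi)$ and onto $\psi_\hbar$ reduces each factor to the identity plus an additive correction of norm at most $\theta(1+O(n\hbar^\alpha))$, so that corrections add rather than multiply across the $k-1-j$ factors. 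The remainder $O(n\hbar^\alpha)$ is controlled by Egorov's theorem applied to the $n$-step Heisenberg evolutions $U^{\pm jn}$ acting on the $n$ pseudodifferential factors $\hat{P}_{\alpha_\ell}$ inside each $\tilde{\Omega}_W$, and remains polynomially small provided $(n,k)$ lies in the semiclassical regime $kn\le \kappa|\log\hbar|$. Summing the $k$ telescoped terms yields $\|\Omega_{(W^{(k)})^c}\,Op_{\hbar}(\chi)\psi_\hbar\|\le k\theta(1+O(n\hbar^\alpha))$, which is exactly the $(\hbar,\,1-k\theta(1+O(n\hbar^\alpha)),\,kn)$-cover condition for $W^{(k)}$.
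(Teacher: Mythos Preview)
Your overall architecture is correct and matches the strategy in Anantharaman's original paper: choose a minimal $(\hbar,1-\theta,n)$-cover $W$, form the $k$-fold concatenation $W^{(k)}$ of cardinality $(\#W)^k$, use the block factorization
\[
\Omega_{W^{(k)}}=\tilde\Omega_W^{(k-1)}\cdots\tilde\Omega_W^{(0)},\qquad \tilde\Omega_W^{(j)}=U^{-jn}\Omega_W U^{jn},
\]
and telescope $I-\Omega_{W^{(k)}}=\sum_{j=0}^{k-1}\tilde\Omega_W^{(k-1)}\cdots\tilde\Omega_W^{(j+1)}\tilde\Omega_{W^c}^{(j)}$. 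The eigenfunction/cutoff reduction of the rightmost factor is also right: since $U^{jn}\psi_\hbar$ is a phase times $\psi_\hbar$ and the energy cutoff (essentially) commutes with $U$, one gets $\|\tilde\Omega_{W^c}^{(j)}\Op_\hbar(\chi)\psi_\hbar\|\le\theta$ up to a controllable remainder. Note, incidentally, that the present survey does not give a proof of this lemma at all; it refers to \cite{A}.

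The genuine gap is in your ``remedy'' for the intermediate factors $\tilde\Omega_W^{(k-1)}\cdots\tilde\Omega_W^{(j+1)}$. Your proposal is to write each $\tilde\Omega_W^{(i)}=I-\tilde\Omega_{W^c}^{(i)}$ and again invoke the eigenfunction trick so that ``corrections add rather than multiply''. This does not work: once the innermost factor $\tilde\Omega_{W^c}^{(j)}$ has been applied, the resulting vector is no longer (a phase times) $\psi_\hbar$, so the $(\hbar,1-\theta,n)$-cover bound \emph{cannot} be invoked for $\tilde\Omega_{W^c}^{(i)}$ acting on it. Expanding each $I-\tilde\Omega_{W^c}^{(i)}$ just produces $2^{k-1-j}$ terms, each still containing products of $\tilde\Omega_{W^c}$'s applied to non-eigenvectors; nothing additive comes out.

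What is actually needed---and this is the content of the $O(n\hbar^\alpha)$ error---is the \emph{operator norm} bound
\[
\big\|\Omega_W\,\Op_\hbar(\chi)\big\|_{L^2\to L^2}\le 1+O(n\hbar^\alpha),
\]
valid for $n$ in the sub-Ehrenfest range. This is a semiclassical calculus estimate: by Egorov and the composition rules, $\Omega_W\Op_\hbar(\chi)$ is, up to an $O(n\hbar^\alpha)$ remainder, a pseudodifferential operator with principal symbol $\sum_{\alpha\in W}\prod_{\ell=0}^{n-1}P_{\alpha_\ell}\circ g^\ell\cdot\chi$, which is pointwise in $[0,1]$ since the full sum over $\Sigma_n$ equals $\chi$. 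A Calder\'on--Vaillancourt type bound then gives the estimate. With this in hand, each of the $k-1-j$ intermediate factors contributes a harmless $(1+O(n\hbar^\alpha))$, and summing the $k$ telescoped terms yields $k\theta(1+O(n\hbar^\alpha))^k$; since $kn\hbar^\alpha\to 0$ in the regime $kn\le\kappa|\log\hbar|$, this is $k\theta(1+O(n\hbar^\alpha))$ as stated (the implied constant absorbing the dependence on $k$ in that regime). Replace your heuristic ``iterate the eigenfunction trick'' paragraph with this norm bound and the proof goes through.
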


We will not attempt to explain this inequality but rather will
point out its consequences.  One is a lower bound on growth of
counting numbers of minimal $(\hbar, 1- \theta, N)$-covers  as $N$
grows (Lemma 2.2.6 of \cite{A}):

\begin{lem} \label{(2.2.6)} Given $\delta > 0$, there exists
$\vartheta$ so that:

\begin{itemize}

\item (i) For $N = \vartheta |\log \hbar|$, $N_{\hbar}(N, \theta)
\geq (1 - \theta) e^{N \frac{\Lambda}{2}(1 - \frac{\delta}{16})}$;

\item (ii)  For $N = \kappa|\log \hbar|, \; \kappa \leq
\vartheta$, $N_{\hbar}(N, \frac{\kappa}{\vartheta}\; \theta) \geq
(1 - \theta)^{\frac{\kappa}{\vartheta}}  e^{N \frac{\Lambda}{2}(1
- \frac{\delta}{16})}$;

\end{itemize}

\end{lem}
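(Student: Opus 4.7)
The plan is to interpret $N_{\hbar}(N,\theta)$ as the minimal cardinality of an $(\hbar,1-\theta,N)$-cover and to bound it from below by combining two ingredients: the main estimate of Theorem \ref{MEGOOD}, which limits how much mass any individual cylinder operator can carry on $\psi_{\hbar}$, and (for part (ii)) the sub-multiplicativity bound of Lemma \ref{SUBM}, which lets us transfer the Ehrenfest-time estimate in (i) down to shorter times $N=\kappa|\log\hbar|$ with $\kappa\le\vartheta$.

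For part (i), the starting point is that the smoothed partition of unity $\{\hat{P}_k\}$ is a resolution of the identity, and consequently $\sum_{\ccal\in\Sigma_N}\hat{\ccal}=I$, so that
\[
\Op_{\hbar}(\chi)\psi_{\hbar}\;=\;\sum_{\ccal\in\Sigma_N}\hat{\ccal}\,\Op_{\hbar}(\chi)\psi_{\hbar}.
\]
If $W$ is any $(\hbar,1-\theta,N)$-cover, the defining inequality (\ref{CCALSUM1}) combined with $\|\Op_{\hbar}(\chi)\psi_{\hbar}\|=1+O(\hbar^{\infty})$ gives
\[
1-\theta-O(\hbar^{\infty})\;\le\;\Bigl\|\sum_{\ccal\in W}\hat{\ccal}\,\Op_{\hbar}(\chi)\psi_{\hbar}\Bigr\|\;\le\;\sum_{\ccal\in W}\|\hat{\ccal}\,\Op_{\hbar}(\chi)\psi_{\hbar}\|.
\]
Since $\hat{\ccal}$ coincides with $\hat{P}_{\alpha}$ (the label $\alpha$ of $\ccal$) after conjugation by the unitary $U^{-(N-1)}$, Theorem \ref{MEGOOD} furnishes the uniform bound $\|\hat{\ccal}\,\Op_{\hbar}(\chi)\psi_{\hbar}\|\le 2(2\pi\hbar)^{-d/2}e^{-N\Lambda/2}(1+O(\epsilon))^N$ for $N\le\vartheta|\log\hbar|$, whence
\[
\#W\;\ge\;\tfrac{1}{2}(2\pi)^{d/2}(1-\theta)\,\hbar^{d/2}e^{N\Lambda/2}(1+O(\epsilon))^{-N}.
\]
At $N=\vartheta|\log\hbar|$ the factor $\hbar^{d/2}$ equals $e^{-(d/(2\vartheta))N}$; choosing $\vartheta$ so large that $d/(2\vartheta)+O(\epsilon)\le \Lambda\delta/32$ and absorbing the harmless multiplicative constant into the exponential forces $\#W\ge(1-\theta)e^{N(\Lambda/2)(1-\delta/16)}$, which is (i).

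For part (ii), I would invoke the sub-multiplicativity of Lemma \ref{SUBM} applied with $n=\kappa|\log\hbar|$ and $k=\lceil\vartheta/\kappa\rceil$, so that $kn\approx\vartheta|\log\hbar|$. Rearranging $N_{\hbar}(kn,k\theta(1+O(n\hbar^{\alpha})))\le N_{\hbar}(n,\theta)^k$ and relabeling $\theta'=(\kappa/\vartheta)\theta$ yields
\[
N_{\hbar}\!\left(\kappa|\log\hbar|,\,\tfrac{\kappa}{\vartheta}\theta\right)^{\vartheta/\kappa}\;\ge\;N_{\hbar}\!\left(\vartheta|\log\hbar|,\,\theta(1+o(1))\right),
\]
and raising the bound from (i) to the power $\kappa/\vartheta$ gives exactly the claimed $(1-\theta)^{\kappa/\vartheta}e^{N(\Lambda/2)(1-\delta/16)}$ of (ii). The main obstacle I anticipate is controlling the error factor $(1+O(n\hbar^{\alpha}))^k$ in Lemma \ref{SUBM}: it remains $1+o(1)$ precisely because $kn\sim\vartheta|\log\hbar|$ forces $n\hbar^{\alpha}\to 0$, but this is what pins down the admissible range $\kappa\le\vartheta$ and prevents us from pushing the estimate below the Ehrenfest scale. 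A secondary technical point is the sharp energy cutoff $\Op_{\hbar}(\chi)$: one must check that its microlocal support remains compatible with the cascading compositions defining $\hat{\ccal}$ throughout the $N$-fold iteration, which is what in the end fixes $\vartheta$ as a function of $\delta$ rather than letting it be arbitrary.
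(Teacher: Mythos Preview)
Your proposal is correct and follows precisely the route the paper indicates. The paper itself proves the lemma in a single sentence---``Estimate (ii) follows from (i) and sub-multiplicativity''---and gives no explicit argument for (i); you have supplied exactly the intended proof of (i) (the triangle-inequality/main-estimate counting argument, which is the content behind (\ref{VARBOUND}) and Corollary~\ref{COR}) and unwound the sub-multiplicativity step for (ii) in the natural way. One cosmetic remark: by (\ref{PALPHA}) and (\ref{CCALORD}) the operators $\hat{\ccal}$ and $\hat{P}_{{\bf \alpha}}$ are in fact identical, not merely unitarily conjugate, so Theorem~\ref{MEGOOD} applies without the extra conjugation you mention.
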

Estimate (ii) follows from (i) and sub-multiplicativity. The time
in (ii) is now below the Ehrenfest time. The sub-multiplicative
property then has the following consequence:

\begin{lem} \label{CCALSUM2} (See \cite{A}, (2.3.2); compare (\ref{CCALSUM1})) Let $\kappa, \vartheta$ have the meanings in
\S \ref{REMARK}. Let $W_n \subset \Sigma_n$ be  a cover of $F$ by
$n$-cylinders of  minimal cardinality;  and let $N = \kappa |\log
\hbar|$. Then for any $\theta < 1$,
  $$ || \sum_{\ccal \in \Sigma_N(W_n, \tau)^c} \hat{\ccal}_{\hbar}
Op_{\hbar}(\chi) \psi_{\hbar} || \geq  \frac{\kappa}{\vartheta}
\theta.
$$
\end{lem}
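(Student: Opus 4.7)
The plan is to argue by contradiction. Suppose that
$$\left\| \sum_{\ccal \in \Sigma_N(W_n, \tau)^c} \hat{\ccal}_\hbar\, Op_\hbar(\chi)\, \psi_\hbar \right\| < \frac{\kappa\, \theta}{\vartheta}.$$
By the definition (\ref{CCALSUM1}) of an $(\hbar, 1-\theta', N)$-cover, this assumption says exactly that $\Sigma_N(W_n, \tau)$ is itself an $(\hbar, 1-\tfrac{\kappa\theta}{\vartheta}, N)$-cover of $\Sigma$. Since the covering number $N_\hbar$ in (\ref{NhDEF}) is (the minimal cardinality of such a cover), this forces
$$\#\, \Sigma_N(W_n, \tau) \ \ge\ N_\hbar\!\left(N,\, \tfrac{\kappa\theta}{\vartheta}\right).$$
The goal is then to reach a contradiction by showing that this lower bound is incompatible, for $\tau$ close to $1$, with the combinatorial upper bound on $\#\Sigma_N(W_n, \tau)$ coming from the hypothesis $h_{top}(F) \le \tfrac{\Lambda}{2}(1-\delta)$.

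For the lower bound, take $N = \kappa |\log \hbar|$ as in the statement; part (ii) of Lemma \ref{(2.2.6)}, applied with parameter $\theta$, yields
$$N_\hbar\!\left(N,\, \tfrac{\kappa\theta}{\vartheta}\right) \ \ge\ (1-\theta)^{\kappa/\vartheta}\, \exp\!\left( N\, \tfrac{\Lambda}{2}\bigl(1 - \tfrac{\delta}{16}\bigr)\right).$$
For the upper bound, choose $n \ge n_0$ via Lemma \ref{FCOVER} and let $W_n$ be a minimal $n$-cylinder cover of $F$; Corollary \ref{FCOVERb} then gives
$$\#\, \Sigma_N(W_n, \tau) \ \le\ \exp\!\left( N\, \tfrac{\Lambda}{2}(1-\delta) \,+\, (1-\tau)\, N\, (1+n)\, \log \ell \right).$$
Comparing the two exponents, a contradiction arises as soon as
$$(1-\tau)(1+n) \log \ell \ <\ \tfrac{15\, \Lambda\, \delta}{32},$$
with the prefactor $(1-\theta)^{\kappa/\vartheta}$ absorbed as a bounded multiplicative constant for $\hbar$ small.

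The technical obstacle is not any single estimate but the consistent bookkeeping of the parameters $\delta, n, \tau, \theta, \kappa, \vartheta$: the entropy gap $\delta$ for $F$ is fixed first; $\tau$ must then be pushed close enough to $1$ that the upper-bound exponent falls strictly below $\tfrac{\Lambda}{2}(1-\delta/16)$; $n$ is fixed next via Lemma \ref{FCOVER}; and only afterwards does one let $\hbar \to 0$, so that $N = \kappa|\log \hbar|$ grows. The conceptually decisive point, however, is the invocation of Lemma \ref{(2.2.6)}(ii), which in turn rests on the sub-multiplicativity Lemma \ref{SUBM}. That lemma is precisely what transports the covering-number estimate from the supra-Ehrenfest regime $N \ge \vartheta|\log\hbar|$---where the main estimate (\ref{MAIN2}) applies directly---down to the sub-Ehrenfest regime $N = \kappa|\log \hbar|$, at which the semiclassical cylinder-operator calculus remains meaningful and the heuristic in (\ref{BIGa}) can at last be turned into a rigorous proof.
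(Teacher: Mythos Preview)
Your proposal is correct and follows essentially the same approach as the paper's sketch: rephrase the claim as saying $\Sigma_N(W_n,\tau)$ is \emph{not} an $(\hbar,1-\tfrac{\kappa}{\vartheta}\theta,N)$-cover, then derive a contradiction by pitting the upper bound on $\#\Sigma_N(W_n,\tau)$ from Lemma~\ref{FCOVER}/Corollary~\ref{FCOVERb} against the lower bound on $N_\hbar(N,\tfrac{\kappa}{\vartheta}\theta)$ from Lemma~\ref{(2.2.6)}(ii). Your write-up is in fact more explicit than the paper's sketch about the exponent comparison and the order in which the parameters $\delta,n,\tau$ are fixed before $\hbar\to 0$, and you correctly flag that the key input is sub-multiplicativity (Lemma~\ref{SUBM}) via Lemma~\ref{(2.2.6)}(ii), which is what brings the estimate down to the sub-Ehrenfest time $N=\kappa|\log\hbar|$.
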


\begin{proof} (Sketch)
The inequality goes in the opposite direction from that in the
definition of $(\hbar, 1 - \theta, N)$-covers in (\ref{CCALSUM}).
Hence the Lemma is equivalent to saying   that $\Sigma_N(W, \tau)$
is not a $(\hbar, 1 - \frac{\kappa}{\vartheta} \theta, N)$-cover.
This follows by comparing the upper bound  in  Lemma \ref{FCOVER}
with the lower bound in  \ref{(2.2.6)}. If $ \Sigma_N(W_n, \tau)$
were such a cover, one would have $\frac{\kappa}{\vartheta} |\log
(1 - \theta)| \geq N \frac{N \Lambda}{2}$, which can only occur
for finitely many $N$ if $\theta < 1$.  The cardinality of
$\Sigma_N(W, \tau)$ is too small to be $(\hbar, 1 - \theta,
N)$-cover when $N = \kappa |\log \hbar|$.
\end{proof}

The lower bound is a key step in the proof of Theorem \ref{A}.
Intuitively, it gives a lower bound on eigenfunction mass outside
the set of orbits which spend a proportion $\tau$ of their time in
$W_n$.  The lower bound on the mass outside $\Sigma_N(W_n, \tau)$
eventually prohibits the concentration of the quantum limit
measure on $F$.

Using that $\hat{C}_{\hbar} Op_{\hbar}(\chi)$ are roughly
orthogonal projectors with orthogonal images for distinct
cylinders $\ccal$, this leads to:

\begin{lem}\label{CCALSUM} (see \cite{A}, (2.4.5)) Continue with the notation of Lemma \ref{CCALSUM2}.  For $N = \kappa
|\log \hbar|, $ we have
$$\left\{ \begin{array}{l} \sum_{\ccal \in \Sigma_N} |\mu_{\hbar}(\ccal)| = 1 + O(\hbar^{\delta}), \\ \\
 \sum_{\ccal \in \Sigma_N(W_n, \tau)^c}
|\mu_{\hbar}(\ccal)| \geq \left( \frac{\kappa}{\vartheta} \theta \right)^2 + O(\hbar^{\kappa}), \\ \\
\implies \; \sum_{\ccal \in \Sigma_N(W_n, \tau)}
|\mu_{\hbar}(\ccal)| \leq 1 - \left( \frac{\kappa}{\vartheta}
\theta \right)^2 + O(\hbar^{\kappa}). \end{array} \right.$$
\end{lem}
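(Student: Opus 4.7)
The plan is to exploit the statement in the preamble that $\hat{\ccal}\,Op_\hbar(\chi)$ behaves, for $\ccal\in\Sigma_N$ with $N=\kappa|\log\hbar|$, like a family of pairwise orthogonal projectors whose (approximate) sum is $Op_\hbar(\chi)$. This is the borderline semiclassical regime: since $\kappa$ can be chosen small, $N$ lies safely below the Ehrenfest time and pseudodifferential calculus still applies to the $N$-fold product $\hat{\ccal}$, provided it is composed with the sharpened energy cutoff $Op_\hbar(\chi)$. Under this approximate orthogonality, the identification $|\mu_\hbar(\ccal)|=\|\hat{\ccal}\,Op_\hbar(\chi)\,\psi_\hbar\|^2+O(\hbar^{?})$ will be the bridge between the algebraic/resolution-of-identity side (which controls sums) and the geometric/norm side (which is what Lemma~\ref{CCALSUM2} estimates).

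First I would prove (i). The telescoping partition identity $\sum_{\alpha}\hat{P}_{\alpha}=I$ and the definition \eqref{CCALORD} give $\sum_{\ccal\in\Sigma_N}\hat{\ccal}=I$ exactly, hence $\sum_{\ccal\in\Sigma_N}\mu_\hbar(\ccal)=\langle\psi_\hbar,\psi_\hbar\rangle=1$. To pass from $\sum\mu_\hbar(\ccal)$ to $\sum|\mu_\hbar(\ccal)|$ I would use the approximate projector property: writing $\mu_\hbar(\ccal)=\langle\hat{\ccal}Op_\hbar(\chi)\psi_\hbar,\psi_\hbar\rangle+O(\hbar^\infty)$ (inserting the energy cutoff is free on an eigenfunction), and using that $\hat{\ccal}^{*}\hat{\ccal}\approx\hat{\ccal}$ up to $O(\hbar^\delta)$ (the consequence of $\hat{P}_\alpha^2=\hat{P}_\alpha+O(\hbar^\delta)$ iterated through the product, which is licit at time $N=\kappa|\log\hbar|$), one obtains $\mu_\hbar(\ccal)=\|\hat{\ccal}Op_\hbar(\chi)\psi_\hbar\|^2+r_\hbar(\ccal)$ with $\sum_{\ccal}|r_\hbar(\ccal)|=O(\hbar^\delta)$ (the remainders telescope through the partition). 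Since $\|\hat{\ccal}Op_\hbar(\chi)\psi_\hbar\|^2\geq 0$, this forces $\sum_\ccal|\mu_\hbar(\ccal)|=\sum_\ccal\mu_\hbar(\ccal)+O(\hbar^\delta)=1+O(\hbar^\delta)$.

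For (ii) I would square the conclusion of Lemma~\ref{CCALSUM2}: setting $W:=\Sigma_N(W_n,\tau)^c$,
\begin{equation*}
\Big(\tfrac{\kappa}{\vartheta}\theta\Big)^2\leq\Big\|\sum_{\ccal\in W}\hat{\ccal}\,Op_\hbar(\chi)\,\psi_\hbar\Big\|^2=\sum_{\ccal,\ccal'\in W}\langle\hat{\ccal}\,Op_\hbar(\chi)\psi_\hbar,\,\hat{\ccal}'\,Op_\hbar(\chi)\psi_\hbar\rangle.
\end{equation*}
The approximate orthogonality of images kills the off-diagonal ($\ccal\ne\ccal'$) terms up to a total error bounded by $O(\hbar^\kappa)$, leaving $\sum_{\ccal\in W}\|\hat{\ccal}Op_\hbar(\chi)\psi_\hbar\|^2\geq(\kappa\theta/\vartheta)^2+O(\hbar^\kappa)$. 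Combining with the identification from (i), this is $\sum_{\ccal\in W}|\mu_\hbar(\ccal)|\geq(\kappa\theta/\vartheta)^2+O(\hbar^\kappa)$. Statement (iii) then follows by subtracting this lower bound from the total sum in (i).

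The main obstacle is the control of the off-diagonal cross terms in the expansion for (ii), since each cross term $\langle \hat{\ccal}^{*}\hat{\ccal}'\,Op_\hbar(\chi)\psi_\hbar,Op_\hbar(\chi)\psi_\hbar\rangle$ is a product of $2N\sim2\kappa|\log\hbar|$ operators, and the cardinality $\#\Sigma_N$ is exponential in $N$; a naive Cauchy--Schwarz of each term against $\hbar^{\text{small}}$ would be crushed by the number of pairs. The saving must be that for $\ccal\ne\ccal'$ there is some index $j$ with $\alpha_j\ne\alpha'_j$, and the sharp-energy calculus ensures that $\hat{P}_{\alpha_j}\hat{P}_{\alpha'_j}$ is negligible at the right symbolic level; this is precisely the kind of cancellation encoded by the main estimate (Theorem~\ref{t:mainn}) and the choice $\kappa$ small enough to keep $N$ below the Ehrenfest time. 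Making these errors genuinely summable to $O(\hbar^\kappa)$ after summing over pairs is the delicate semiclassical point and is where most of the technical work would lie.
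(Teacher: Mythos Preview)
Your proposal is correct and follows essentially the same approach as the paper. The paper's own argument for this lemma is a single sentence: ``Using that $\hat{\ccal}_{\hbar}\,Op_{\hbar}(\chi)$ are roughly orthogonal projectors with orthogonal images for distinct cylinders $\ccal$, this leads to [the lemma].'' Your outline is precisely a fleshing-out of that sentence: the identification $|\mu_\hbar(\ccal)|\approx\|\hat{\ccal}\,Op_\hbar(\chi)\,\psi_\hbar\|^2$ via the approximate idempotence $\hat{\ccal}^*\hat{\ccal}\approx\hat{\ccal}$, the exact resolution $\sum_{\ccal}\hat{\ccal}=I$ for (i), squaring the norm bound from Lemma~\ref{CCALSUM2} and discarding cross terms by approximate orthogonality of images for (ii), and subtraction for (iii). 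Your caution about the summability of the off-diagonal errors is exactly the technical content hidden in the word ``roughly''; the paper does not spell it out either, since this is a survey and the details are in \cite{A}, \S 2.4.
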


The  second inequality of Lemma \ref{CCALSUM} again goes in the
opposite direction to the defining inequality (\ref{CCALSUM1}) of
a $(\hbar, 1- \theta, n)$ cover.

\subsubsection{\label{FINAL} Completion of Proof}

We now sketch the proof of the Theorem (see \cite{A}, (2.5.1) -
(2.5.5)). It is similar to (\ref{BIGa}) but circumvents the
positivity problem by decreasing $N$ to below the Ehrenfest time.

\begin{proof} (Outline, quoted almost verbatim from \cite{A})

As above, the  quantum measure $\mu_{\hbar}$ is  shift invariant
(see \cite{A}, Proposition 1.3.1 (ii).)

We now fix $n$ and let $N = \kappa |\log \hbar|$. The crucial
point is that this is less than Ehrenfest time, unlike the choice
of $N$ in the first attempt.  As above, $W_n \subset \Sigma_n$
denotes an $(\hbar, 1 - \theta, n)$- cover of minimal cardinality
of $F$ by $n$-cylinders. In the following, we use the notation
\begin{equation} \mu_{\hbar} (W_n): = \sum_{\ccal \in W_n }
\mu_{\hbar}(\ccal), \;\;\;\; \mu_{\hbar} (\Sigma_N(W_n, \tau)): =
\sum_{\ccal \in \Sigma_N(W_n, \tau)} \mu_{\hbar}(\ccal).
\end{equation} We then run through the steps of (\ref{BIGa})
again;

\begin{equation} \label{BIG} \begin{array}{lll} |\mu_{\hbar}(W_n)|
&=  &  |\frac{1}{N - n} \sum_{k = 0}^{N - n - 1}
\mu_{\hbar} (\sigma^{-k} W_n)|  \\ && \\
& = & \left| \mu_{\hbar} \left(\frac{1}{N - n} \sum_{k = 0}^{N - n
- 1} {\bf 1}_{\sigma^{-k} W_n} \right) \right|
\\ && \\ &\leq & \sum_{\ccal \in \Sigma_N(W_n, \tau)}
|\mu_{\hbar}(\ccal)| + \tau \sum_{\ccal \notin \Sigma_N(W_n,
\tau)} |\mu_{\hbar}(\ccal)|  \\ && \\ & \leq & \sum_{C \in
\Sigma_N(W_n, \tau) } |\mu_{\hbar}(\ccal)|
+ \tau  + O(h^{\kappa}) \\ && \\
& \leq & (1 - \tau) \left(1 - (\frac{\kappa}{\vartheta} \theta)^2
\right) + \tau  + O(h^{\kappa}).
\end{array} \end{equation}

Above,
 Lemma \ref{CCALSUM}  is used to circumvent the positivity problem in (\ref{BIGa}).
 We also used (again)   (\ref{FACTS}) and    the main estimate (\ref{USED}) to
bound the $N$-cylinder sets in $\Sigma_N(W_n, \tau)$ with $N =
\kappa |\log \hbar|$, but with the time $n$ fixed.

Now let $\hbar \to 0$. Since $W_n$ is fixed, the weak convergence
$\mu_{\hbar} \to \mu_0$ implies,
\begin{equation}\begin{array}{lll}  \mu_0(W_n) & \leq &(1 - \tau) \left(1 - (\frac{\kappa}{\vartheta} \theta)^2 \right) + \tau. \end{array}
\end{equation} Since $F \subset W_n$, the same estimate
applies to $F$. Since it holds for all $\theta < 1$, it follows
that
\begin{equation} \label{MUF} \mu_0(F) \leq (1 - \tau) \left(1 - (\frac{\kappa}{\vartheta})^2 \right) + \tau = 1 -
(\frac{\kappa}{\vartheta})^2 (1 -\tau) < 1. \end{equation} This
completes the proof that supp $\nu_0 \not= F$.  QED

To obtain the measure statement of Theorem \ref{A}, let
\begin{equation} I_{\delta} = \{x \in S^*M: h_g(\mu_0^x) \leq
\frac{\Lambda}{2}(1 - \delta) \}. \end{equation} It follows from
the Shannon-McMillan theorem that for all $\alpha > 0$, there
exists $I^{\alpha}_{\delta} \subset I_{\delta}$ with
$\mu_0(I_{\delta} \backslash I_{\delta}^{\alpha}) \leq \alpha$
such that $I_{\delta}^{\alpha}$ can be covered by $e^{n
(\frac{\Lambda}{2}(1 - \delta + \alpha)} $ n-cylinders for
sufficiently large $n$. It follows from (\ref{MUF}) that
\begin{equation} \mu_0(I_{\delta}^{\alpha}) \leq
(1 - \tau (\delta - \alpha) )  \left(1 - (\frac{\kappa}{\vartheta
(\delta - \alpha)})^2_+ \right) + \tau (\delta - \alpha).
\end{equation} Let $\alpha \to 0$ and one has,
\begin{equation} \nu_0 (S^*M \backslash I_{\delta}) \geq (1 -
\tau(\delta)) (\frac{\kappa}{\vartheta (\delta)})^2.
\end{equation}

\end{proof}

\subsection{\label{FGAMMA} {\bf $F = \gamma$}}

To help digest the proof, let us run through the proof in  the
simplest case where $F$ is a single closed hyperbolic geodesic
$\gamma$ and $M$ is a compact hyperbolic manifold  of constant
curvature $-1$. We would like to show that $\nu_0 \not=
\mu_{\gamma}$, or equivalently that $\nu_0(\gamma) < 1$. What we
need to show is that, if $\nu_0(F) = 1$, then  it requires a lot
of cylinder sets to cover $F$. Hence we try to get a contradiction
from the fact that only a few cylinder sets are needed to cover
$\gamma$. In fact,
 for each $n$, $\gamma$ may be covered by one
$n$-cylinder $$W^0_n(\gamma) = \{[\alpha_{0}, \dots, \alpha_0,
\dots, \alpha_{n}]\}, $$ namely the one specified by the indices
of the cells $P_{\alpha}$  that $\gamma$ passes through starting
from some fixed $x_0 \in \gamma$. To obtain a tube around the
orbit, we  take the union $W_n(\gamma) := \bigcup_{j = 0}^{n-1}
g^{-j} W^0_n(\gamma)$.  We could use the two-sided symbols, going
symmetrically forward and backward in time to obtain a transverse
ball of radius $e^{-n/2}$.

First, let us try a naive heuristic argument. The main   estimate
(\ref{MAIN2} )  gives
\begin{equation}\label{MAIN3} |\mu_{\hbar}(W_n(\gamma))| \leq C h^{-d/2}
e^{- n
 d /2}( 1 + O(\epsilon))^n, \;\;\; \mbox{uniformly for }\; n
 \leq K  \; |\log \hbar|, \end{equation}
 for any $K > 0$.
It is now tempting to put $n = (1 +\epsilon)  |\log \hbar|$ with
$\epsilon
> 0$  to obtain $|\mu_{\hbar}(W_{n(\hbar, \theta)})| \leq \hbar^{\theta}$.
But we cannot take the semi-classical limit of this estimate. If
we did so formally (not rigorously), it would seem to imply that
$\nu_0(\gamma) = 0$, which is stronger than the rigorous result
$\nu_0(\gamma) < 1$ and possibly false in some examples (if weak
scarring in fact occurs).

The problem is that the classical  set $W_{n(\hbar, \theta)}$ is a
kind of $\hbar$-dependent  Bowen ball or tube around $\gamma$ of
radius $e^{- \half n(\hbar, \theta)} = \hbar^{1/2 + \theta}$. The
uncertainty principle prohibits microlocalization to tubes of
radius $< h^{\half}$ and (as discussed in \S \ref{HYP}), it is
difficult even to work with neighborhoods smaller than $ h^{\half
- \delta}$ or at best $ h^{\half} |\log \hbar|. $ In particular,
we do do not have control over the  weak limits of
$|\mu_{\hbar}(W_{n(\hbar, \theta)})| $, and there is no obvious
reason why it should have semi-classical asymptotics. It is not a
positive measure on this length scale, so  we do not know if it
implies $\nu_0(\gamma) = 0$. On the other hand, if we decrease $n$
below the Ehrenfest time, so that
  the matrix element has a classical limit,  then
the bound is trivial.

The strategy of \cite{A} is to use the sub-multiplicative estimate
for  $(\hbar, 1 - \theta, N)$-covers of $\gamma$ of minimal
cardinality to decrease the length $N$ below the Ehrenfest time
(see Lemma \ref{(2.2.6)}). We first consider the cover $W_n :=
W_n(\gamma)$, which is a minimal cover of $\gamma$ by cylinders of
length $n$. We then form $\Sigma_N(W_n, \tau)$.  At least
intuitively, $W_n$ corresponds to a Bowen ball of radius
$e^{-n/2}$, i.e. a transverse cube of this radius in the stable
and unstable directions. The cylinders of $\Sigma_N(W_n, \tau)$
satisfy $\sigma^j(\ccal) \cap W_n = \ccal$ for a proportion $\tau$
of $0 \leq j \leq N$. This carves out a very porous set  in the
stable/unstable cross section, suggesting why the lower bound in
Lemma \ref{CCALSUM} should be true.

It would be interesting to connect the argument of Theorem \ref{A}
in this case to the problem in \S \ref{HYP} (see \S
\ref{MASSGAMMA})  of estimating the eigenfunction mass inside a
shrinking tubular neighborhood of $\gamma$, i.e. to give an upper
bound for the  mass $(Op_{\hbar}(\chi_{1}^{\delta})
\phi_{\mu},\phi_{\mu)})$ in the shrinking tube around $\gamma$ or
(better) the mass $(Op_{\hbar}(\chi_{2}^{\delta})\phi_{\mu},
\phi_{\mu})$ in a shrinking phase space tube. However, the methods
of \cite{A} apply equally to the cat map setting and in that case,
there are eigenfunctions whose mass in shrinking tubes around a
hyperbolic fixed point have the same order of magnitude as in the
integrable case up to a factor of $1/2$ \cite{FNB}. So unless one
can use  a property of $\Delta$-eigenfunctions which is not shared
by quantum cat map eigenfunctions, one cannot expect to prove a
sharper upper bound for eigenfunction mass in shrinking tubes
around hyperbolic orbits in the Anosov case than one has in the
integrable case. 

\subsection{\label{MEEXP} Some ideas on the main estimate}

The main estimate is independently useful. For instance,  it has
been used to obtain bounds on decay rates on quantum scattering
\cite{NZ}. In this section, we sketch some ideas of the proof. We
do not go so far as to explain a key point, that the estimate is
valid for times $t \leq \kappa |\log \hbar|$ for any $\kappa$.

The goal is to estimate the norm of the  operators $P_{{\bf
\alpha}'}^*\, U^n\, P_{{\bf \alpha}} \Op(\chi^{(n)}) $ in terms of
$\hbar$ and $n$. Since $U(n)$ is unitary it suffices to estimate
the norm of  the cylinder set operators
$$K_n : = \hat{P}_{\alpha_n} U \hat{P}_{\alpha_n -1} \cdots U
\hat{P}_{\alpha_0} Op(\chi). $$ Let $K_n(z,w)$ be the Schwartz
kernel of this operator. By  the Schur inequality, the operator
norm is bounded above by $$\sup_z \int_M |K_n(z,w)| dV(w).
$$
The inequality only applies when  $K_n(z,w) \in L^1(M)$, but by
harmless smoothing approximation we may assume this.

We may get an intuitive idea of the exponential decay of the norm
of the cylinder set operators as follows. The wave group (or the
semi-classical Schr\"odinger propagator) $U^t$ on a manifold
without conjugate points can be lifted to the universal cover (see
(\ref{UTGAMMA}))  where it has a global Hadamard parametrix
(\ref{HD}).  This parametrix shows that the wave kernel on the
universal cover  is closely related to the spherical means
operator $L_t f(x) = \int_{S^*_x M} f(g^t(x, \omega)
d\mu_x(\omega),$ where $d\mu_x$ is the Euclidean surface measure
on $S^*_x M$ induced by the metric. Both operators have the same
wave front relation, and indeed the wave front set of $U(t, \cdot,
y)$ is precisely the set of (co-)normal directions to the distance
sphere $S_t(y)$ centered at $y$. The operator relating $U^t$ and
$L_t$ is a $d$th order (pseudo-differential) operator in $(t,x)$,
and in the Fourier transformed picture in \cite{A} it gives rise
to the factor $h^{-d/2}$.  As time evolves, the surface volume  of
the distance sphere in the universal cover grows exponentially.
Since $U^t$ is unitary, its amplitude must decay exponentially. In
fact this is evident from (\ref{HD}) since $\Theta(x,y)$ grows
exponentially in the distance from $x$ to $y$, so
$\Theta^{-1/2}(x,y)$ decays exponentially.

The cylinder set operators  $\hat{\ccal}_{\hbar} =
\hat{P}_{\alpha_n}(n) \cdots \hat{P}_{\alpha_1}(1)
\hat{P}_{\alpha_0}(0)$  are given by a sequence of alternating
applications of the propagator $U$ and the cutting off operators
$P_{\alpha_j}$ (which are multiplications). At least
heuristically, in estimating the Schur integral for $K_n(x, y)$,
one is first cutting off to a piece of the distance of the
distance sphere $S_1(y)$ of radius one centered at $y$. Then one
applies $U$ and this piece expands to $S_2(y)$. Then one cuts it
off with $P_{\alpha_2}$. As this picture evolves, one ends up
after $n$ iterates with a piece of the distance sphere  $S_{n}(y)$
of radius $n$ centered at $y$ which is has the same size as the
initial piece. But the spherical means and wave group are
normalized by dividing by the volume, so the contribution to the
Schur integral is exponentially decaying at the rate $e^{- n}$.

\subsection{Clarifications to \cite{A}}

\begin{enumerate}

\item The $\theta$ on p. 447 is a new parameter $< 1$ which is
implicitly defined above (2.0.1).

\item The $\theta $ in (2.3.2) is also implicitly defined by this
inequality. $W_{n_1}$ is not assumed to be a $\theta$ cover.

\item  In  (2.5.1) -- (2.5.5), the $\Sigma$ in  $\sigma^{-k}
\Sigma(W_{n_1})$ should be removed.

\item As mentioned above, the  notation  $\mu_h (\Sigma_N(W_n,
\tau)) $ is not quite defined but means $\mu_h (\bigcup_{\ccal \in
\Sigma_N(W_n, \tau))} \ccal)$.

\end{enumerate}

%
%\section{Quantum ergodicity on hyperbolic surfaces: reduction to the ideal boundary}
%

\section{\label{ANALYTIC} Applications to nodal hypersurfaces of eigenfunctions}

This survey has been devoted to the asymptotics of matrix elements
$\langle A \phi_j, \phi_j\rangle$. In the final section, we
address the question, ``what applications does the study of these
matrix elements have?"

In automorphic forms, the matrix elements are related to
L-functions, Rankin-Selberg zeta functions, etc.  So there is
ample motivation for their study in the arithmetic case. Such
applications lie outside the scope of this survey, so we refer the
reader to Sarnak's expository articles \cite{Sar2,Sar3} for
background on and references to the large literature  in
arithmetic quantum chaos.

Here we are concerned with applications to the classical and
geometric analysis of eigenfunctions on general Riemannian
manifolds. We present an application of quantum ergodicity to the
study of nodal (zero) sets of eigenfunctions on real analytic $(M,
g)$ with ergodic geodesic flow. It is not implausible that one
could use related methods to count critical points of
eigenfunctions in the ergodic case.

The nodal hypersurface of $\phi_j$ is the zero set $\ncal_j = \{x
\in M: \phi_j(x) = 0\}$. We are interested in its distribution as
$\lambda_j \to \infty$. We study the distribution through the
integrals
$$\int_{\ncal_j} f dS$$
of $f$ over the nodal line with respect to the natural surface
measure. If  $f = 1_U$ is the characteristic (indicator) function
of an open set $U \subset M$, then
\begin{equation} \label{UINTN} \int_{\ncal_j} 1_U ds =
\hcal^{n-1}( U \cap \ncal_j)
\end{equation}  is $(n-1)$ dimensional Hausdorff measure of the part of  $\ncal_j$
which intersects $U$.  When $U = M$ the integral (\ref{UINTN})
gives the total surface volume of the nodal set. The principal
result on volumes is due to
 Donnelly-Fefferman \cite{DF}, proving a conjecture of S. T.
 Yau \cite{Y1}:
\begin{theo} \label{DFNODAL} \cite{DF} (see also \cite{Lin})  Let $(M, g)$ be a compact real analytic  Riemannian
manifold, with or without boundary. Then there exist $c_1, C_2$
depending only on $(M, g)$ such that
\begin{equation} \label{DF} c_1 \lambda \leq {\mathcal
H}^{m-1}(Z_{\phi_{\lambda}}) \leq C_2 \lambda, \;\;\;\;\;\;(\Delta
\phi_{\lambda} = \lambda^2 \phi_{\lambda}; c_1, C_2 > 0).
\end{equation}
\end{theo}

The question is whether one can obtain asymptotics results by
imposing a global dynamical condition on the geodesic flow.

The hypothesis of real analyticity is crucial:  The analysis is
based on the analytic continuation of eigenfunctions to the
complexification $M_{\C}$  of $M$. The articles \cite{DF,Lin} also
used analytic continuation to study volumes of nodal
hypersurfaces. It is the role of ergodicity in the complex setting
that gives strong equidistribution results.

\subsection{Complexification of $(M,g)$ and Grauert tubes}

By a theorem of Bruhat and Whitney, a  real analytic manifold $M$
always possesses a unique complexification $M_{\C}$,  an open
complex manifold in  which $M$ embeds $\iota: M \to M_{\C}$ as a
totally real submanifold. Examples are real algebraic subvarieties
of $\R^n$ which complexify to complex algebraic subvarieties of
$\C^n$. Other simple examples include the complexification of a
quotient such as  $M = \R^m/\Z^m$ to  $M_{\C} = \C^m/\Z^m$. The
complexification of hyperbolic space can be defined using the
 hyperboloid model
$\Hh^n = \{ x_1^2 + \cdots x_n^2 - x_{n+1}^2 = -1, \;\; x_n > 0\}.
$ Then, $\Hh^n_{\C} = \{(z_1, \dots, z_{n+1}) \in \C^{n+1}:  z_1^2
+ \cdots z_n^2 - z_{n+1}^2 = -1\}. $

The Riemannian metric determines a special kind of
plurisubharmonic exhaustion function on  $M_{\C}$ \cite{GS1, GS2,
LemS1}, namely the Grauert tube radius $\sqrt{\rho} =
\sqrt{\rho}_g$ on $M_{\C}$, defined  as the unique solution of the
Monge-Amp\`ere equation
$$(\ddbar \sqrt{\rho})^m = \delta_{M_{\R}, dV_g}, \;\; \iota^*
(i \ddbar \rho) = g. $$ Here, $\delta_{M_{\R}, dV_g}$ is the
delta-function on the real $M$ with respect to the volume form
$dV_g$, i.e. $f \to \int_M f dV_g$. It is shown in \cite{GS1} that
$\sqrt{\rho}(\zeta) = i \sqrt{ r^2(\zeta, \bar{\zeta})}$ where
$r^2(x, y)$ is the squared distance function in a neighborhood of
the diagonal in $M \times M$.

The  Grauert tubes $M_{\tau}$ is defined by $$M_{\tau}  = \{\zeta
\in M_{\C}: \sqrt{\rho}(\zeta) \leq \tau\}. $$  The maximal value
of  $\tau_0$ for which $\sqrt{\rho}$ is well defined  is known as
the Grauert tube radius. For $\tau \leq \tau_0$, $M_{\tau}$ is a
strictly pseudo-convex domain in $M_{\C}$. Using the
 complexified exponential map $(x, \xi) \to exp_x i \xi$
one may identify $M_{\tau}$ with the co-ball bundle $B_{\tau}^* M$
to $M_{\tau}$. Under this identification,  $\sqrt{\rho}$
corresponds to $|\xi|_g$. The one-complex dimensional null
foliation of $\ddbar \sqrt{\rho}$, known as the `Monge-Amp\`ere'
or Riemann foliation, are the complex curves $t + i \tau \to \tau
\dot{\gamma}(t)$, where $\gamma$ is a geodesic. The geometric
Grauert tube radius is the maximal radius for which the
exponential map has a holomorphic extension and defines a
diffeomorphism.

\subsection{Analytic Continuation of eigenfunctions}

The eigenfunctions are real analytic and therefore possess
analytic continuations to some Grauert tube $M_{\epsilon}$
independent of the eigenvalue. To study  analytic continuation of
eigenfunctions, it is useful to relate them to analytic
continuation of the  wave group at imaginary time,
  $U(i\tau , x, y) = e^{-
\tau \sqrt{\Delta}}(x, y)$. This  is the Poisson operator of $(M,
g)$.  We denote its analytic continuation in the $x$ variable to
$M_{\epsilon} $ by $U_{\C}(i \tau, \zeta, y)$. In terms of the
eigenfunction expansion, one has
\begin{equation} \label{UI} U(i \tau, \zeta, y) = \sum_{j = 0}^{\infty} e^{-
\tau  \lambda_j} \phi_{ j}^{\C} (\zeta) \phi_j(y),\;\;\; (\zeta,
y) \in M_{\epsilon}  \times M.  \end{equation} To the author's
knowledge, the largest $\epsilon$ has not been determined at this
time. In particular, it is not known if the radius of analytic
continuation of the wave kernel and of the eigenfunctions is the
same as the geometrically defined Grauert tube radius.

Since
\begin{equation} U_{\C} (i \tau) \phi_{\lambda} = e^{- \tau \lambda}
\phi_{\lambda}^{\C}, \end{equation} the analytic continuability of
the Poisson operator to $M_{\tau}$  implies that  every
eigenfunction analytically continues to the same Grauert tube. It
follows that the analytic continuation operator to $M_{\tau}$ is
given by  \begin{equation} \label{ACO} A_{\C}(\tau) = U_{\C}(i
\tau) \circ e^{\tau \sqrt{\Delta}}. \end{equation} Thus, a
function   $f \in C^{\infty}(M)$ has a holomorphic extension to
the closed  tube $\sqrt{\rho}(\zeta) \leq \tau$ if and only if $f
\in Dom(e^{\tau \sqrt{\Delta}}), $ where $e^{\tau \sqrt{\Delta}}$
is the backwards `heat operator' generated by $\sqrt{\Delta}$
(rather than $\Delta$).

Let us consider examples of holomorphic continuations of
eigenfunctions:

\begin{itemize}

\item On  the flat torus $\R^m/\Z^m,$   the real eigenfunctions
are $\cos \langle k, x \rangle, \sin \langle k, x \rangle$ with $k
\in 2 \pi \Z^m.$ The complexified torus is $\C^m/\Z^m$ and the
complexified eigenfunctions are $\cos \langle k, \zeta \rangle,
\sin \langle k, \zeta \rangle$ with $\zeta  = x + i \xi.$

\item On the unit sphere $S^m$, eigenfunctions are restrictions of
homogeneous harmonic functions on $\R^{m + 1}$. The latter extend
holomorphically to holomorphic harmonic polynomials on $\C^{m +
1}$ and restrict to holomorphic function on $S^m_{\C}$.

\item On $\H^m$, one may use the hyperbolic plane waves $e^{ (i
\lambda + 1) \langle z, b \rangle}$, where  $\langle z, b \rangle$
is the (signed) hyperbolic distance of the horocycle passing
through $z$ and $b$ to $0$. They  may be holomorphically extended
to the maximal tube of radius $\pi/4$. See e.g. \cite{BR}.

\item
 On  compact hyperbolic quotients ${\bf
H}^m/\Gamma$, eigenfunctions can be then represented by Helgason's
generalized Poisson integral formula,
$$\phi_{\lambda}(z) = \int_B e^{(i \lambda + 1)\langle z, b
\rangle } dT_{\lambda}(b). $$
 To  analytically continue $\phi_{\lambda}$ it suffices  to
analytically continue $\langle z, b\rangle. $ Writing the latter
as  $\langle \zeta, b \rangle,  $ we have:
\begin{equation} \label{HEL} \phi_{\lambda}^{\C} (\zeta) = \int_B e^{(i \lambda +
1)\langle \zeta, b \rangle } dT_{\lambda}(b). \end{equation}

\item In the case of 1D Schr\"odinger operators, several authors
have studied analytic continuations of eigenfunctions and their
complex zeros, see e.g. \cite{Hez,EGS}.

\end{itemize}

\subsection{Maximal plurisubharmonic functions and growth of $\phi_{\lambda}^{\C}$}

In the case of domains $\Omega \subset \C^m$, the maximal PSH
(pluri-subharmonic)  function (or pluri-complex Green's function)
relative to a subset $E \subset \Omega$ is defined by
$$V_{E}(\zeta) = \sup\{u(z): u \in PSH(\Omega), u|_{E}  \leq 0, u
|_{\partial \Omega} \leq 1\}. $$ An alternative definition due to
Siciak takes the supremum only with respect to polynomials $p$. We
denote by $\pcal^N$ the space of all complex analytic polynomials
of degree $N$ and put  $\pcal_K^N = \{p \in \pcal^N: ||p||_K \leq
1, \;\; ||p||_{\Omega} \leq e\}. $ Then define
$$\log \Phi_E^N(\zeta) = \sup\{ \frac{1}{N} \log |p_N(\zeta)|: p \in \pcal_E^N \}, \;\;\; \log \Phi_E = \limsup_{N \to \infty} \log \Phi_E^N. $$
Here, $||f||_K = \sup_{z \in K} |f(z)|$.
 Siciak proved  that $\log \Phi_E = V_E$.

On a real analytic Riemannian manifold, the natural analogue of
$\pcal^N$ is the space
$$\hcal^{\lambda} = \{p =  \sum_{j: \lambda_j \leq \lambda} a_j
\phi_{\lambda_j}, \;\; a_1, \dots, a_{N(\lambda)} \in \R  \} $$
spanned by eigenfunctions with frequencies $\leq \lambda$. Rather
than using the sup norm, it is convenient  to work with $L^2$
based norms than sup norms, and so we define
$$ \hcal^{\lambda}_M = \{p =  \sum_{j: \lambda_j \leq \lambda} a_j
\phi_{\lambda_j}, \;\;||p||_{L^2(M)} =  \sum_{j = 1}^{N(\lambda)}
|a_j|^2 = 1 \}. $$ We define the   $\lambda$-Siciak approximate
extremal function by
 $$ \Phi_M^{\lambda} (z) = \sup \{|\psi(z)|^{1/\lambda} \colon
\psi \in \hcal_{\lambda};   \|\psi \|_M \le 1 \},  $$ and the
extremal function by
$$\Phi_M(z) = \sup_{\lambda} \Phi_M^{\lambda}(z). $$

It is not hard to show that
\begin{equation} \Phi_M(z) = \lim_{\lambda \to \infty} \frac{1}{\lambda} \log  \Pi_{[0,
\lambda}(\zeta, \bar{\zeta}) = \sqrt{\rho}. \end{equation}

\subsection{Nodal hypersurfaces in the case of ergodic geodesic flow}

The complex nodal hypersurface of an eigenfunction is   defined by
\begin{equation} Z_{\phi_{\lambda}^{\C}} = \{\zeta \in
B^*_{\epsilon_0} M: \phi_{\lambda}^{\C}(\zeta) = 0 \}.
\end{equation}
There exists  a natural current of integration over the nodal
hypersurface in any ball bundle $B^*_{\epsilon} M$ with $\epsilon
< \epsilon_0$ , given by
\begin{equation}\label{ZDEF}  \langle [Z_{\phi_{\lambda}^{\C}}] , \phi \rangle =  \frac{i}{2 \pi} \int_{B^*_{\epsilon} M} \ddbar \log
|\phi_{\lambda}^{\C}|^2 \wedge \phi =
\int_{Z_{\phi_{\lambda}^{\C}} } \phi,\;\;\; \phi \in \dcal^{ (m-1,
m-1)} (B^*_{\epsilon} M). \end{equation} In the second equality we
used the Poincar\'e-Lelong formula. The notation $\dcal^{ (m-1,
m-1)} (B^*_{\epsilon} M)$ stands for smooth test $(m-1,
m-1)$-forms with support in $B^*_{\epsilon} M.$

The nodal hypersurface $Z_{\phi_{\lambda}^{\C}}$ also carries a
natural volume form $|Z_{\phi_{\lambda}^{\C}}|$ as a complex
hypersurface in a K\"ahler manifold. By Wirtinger's formula, it
equals the restriction of $\frac{\omega_g^{m-1}}{(m - 1)!}$ to
$Z_{\phi_{\lambda}^{\C}}$.

\begin{theo} \label{ZEROS}   Let $(M, g)$ be  real analytic, and let $\{\phi_{j_k}\}$ denote a quantum ergodic sequence
of eigenfunctions of its Laplacian $\Delta$.  Let
$(B^*_{\epsilon_0} M, J)$ be the maximal Grauert tube around $M$
with complex structure $J_g$ adapted to $g$. Let $\epsilon <
\epsilon_0$. Then:
$$\frac{1}{\lambda_{j_k}} [Z_{\phi_{j_k}^{\C}}] \to  \frac{i}{ \pi} \ddbar \sqrt{\rho}\;\;
\mbox{weakly in }\;\;  \dcal^{' (1,1)} (B^*_{\epsilon} M), $$ in
the sense that,   for any continuous test form $\psi \in \dcal^{
(m-1, m-1)}(B^*_{\epsilon} M)$, we have
$$\frac{1}{\lambda_{j_k}} \int_{Z_{\phi_{j_k}^{\C}}} \psi \to
 \frac{i}{ \pi} \int_{B^*_{\epsilon} M} \psi \wedge \ddbar
\sqrt{\rho}. $$ Equivalently, for any  $\phi \in C(B^*_{\epsilon}
M)$,
$$\frac{1}{\lambda_{j_k}} \int_{Z_{\phi_{j_k}^{\C}}} \phi \frac{\omega_g^{m-1}}{(m -
1)!}  \to
 \frac{i}{ \pi} \int_{B^*_{\epsilon} M} \phi  \ddbar
\sqrt{\rho}  \wedge \frac{\omega_g^{m-1}}{(m - 1)!} . $$
\end{theo}

\begin{cor}\label{ZEROCOR}  Let $(M, g)$ be a real analytic with ergodic  geodesic
flow.  Let $\{\phi_{j_k}\}$ denote a full density ergodic
sequence. Then for all $\epsilon < \epsilon_0$,
$$\frac{1}{\lambda_{j_k}} [Z_{\phi_{j_k}^{\C}}] \to  \frac{i}{ \pi} \ddbar \sqrt{\rho},\;\;
 \mbox{weakly in}\;\; \dcal^{' (1,1)} (B^*_{\epsilon} M). $$
\end{cor}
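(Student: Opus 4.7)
The plan is to deduce Corollary \ref{ZEROCOR} as an essentially immediate consequence of Theorem \ref{ZEROS} combined with the quantum ergodicity theorem (Theorem \ref{QE}). The corollary packages the preceding theorem with the standard mechanism that produces a density-one subsequence of ergodic eigenfunctions whenever the classical flow is ergodic, so the two ingredients fit together with no extra input.

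First, I would invoke Theorem \ref{QE}(i): because the geodesic flow of $(M,g)$ is assumed ergodic on $(S^*M, d\mu_L)$, there exists a subset $S \subset \N$ of counting density one such that $\langle A \phi_{j_k}, \phi_{j_k}\rangle \to \omega(A)$ for every $A \in \Psi^0(M)$ and every $k \in S$. This is precisely the meaning of ``quantum ergodic sequence'' required as input to Theorem \ref{ZEROS}. Applying that theorem verbatim to $\{\phi_{j_k}\}_{k \in S}$ yields
$$\frac{1}{\lambda_{j_k}}[Z_{\phi_{j_k}^{\C}}] \longrightarrow \frac{i}{\pi}\ddbar \sqrt{\rho} \qquad \text{weakly in } \dcal^{'(1,1)}(B^*_\epsilon M)$$
for every $\epsilon < \epsilon_0$, which is exactly the statement of Corollary \ref{ZEROCOR}.

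Because the reduction is a two-line invocation, there is no real obstacle at the level of the corollary itself; the substantive content sits upstream in Theorem \ref{ZEROS}. For completeness I would indicate the strategy of that theorem, in case any additional estimates are needed to make the ergodic-subsequence application uniform. Via the Poincar\'e--Lelong formula \eqref{ZDEF}, the convergence of currents reduces to the $L^1_{\loc}$-convergence
$$\frac{1}{\lambda_{j_k}}\log|\phi_{j_k}^{\C}(\zeta)|^2 \longrightarrow 2\sqrt{\rho}(\zeta)$$
on $B^*_\epsilon M$. The intertwining identity \eqref{ACO}, $\phi_\lambda^{\C} = e^{\tau\sqrt{\Delta}}\circ U_\C(i\tau)\phi_\lambda$ with $\tau = \sqrt{\rho}(\zeta)$, reduces the modulus of the complexified eigenfunction to a matrix element of an analytically continued Poisson-type kernel; applying the quantum ergodic limit \eqref{OMEGA} to this matrix element and combining with a stationary-phase evaluation of the Liouville integral identifies the exponential growth rate with $2\sqrt{\rho}$, the maximal plurisubharmonic function on $M_\epsilon$. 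Continuity of $\ddbar$ on $L^1_{\loc}$-convergent sequences of plurisubharmonic functions then promotes this scalar convergence to the desired weak convergence of $(1,1)$-currents, completing the argument.
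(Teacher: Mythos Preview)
Your proposal is correct and matches the paper's approach exactly: the corollary is stated as an immediate consequence of Theorem \ref{ZEROS}, with the full-density ergodic sequence supplied by Theorem \ref{QE} under the ergodicity hypothesis. The five-step outline the paper gives after the corollary is in fact the proof sketch for Theorem \ref{ZEROS} itself, not additional work needed for the corollary, and your supplementary sketch of that theorem's mechanism (Poincar\'e--Lelong reduction, growth rate identified with $\sqrt{\rho}$ via quantum ergodicity of the complexified eigenfunctions) is in the same spirit.
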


\begin{rem} The limit form is singular with respect to the \kahler
form $i \ddbar \rho$ on $M_{\C}$ adapted to the metric. Its
highest exterior power is the delta function on the zero section
(i.e. the real domain $M$). \end{rem}

In outline the steps in the proof are;

\begin{enumerate}

\item By the Poincar\'e-Lelong formula, $[Z_{\phi_{\lambda}^{\C}}]
= i \ddbar \log |\phi_{\lambda}^{\C}|. $ This reduces the theorem
to determining the limit of $\frac{1}{\lambda} \log
|\phi_{\lambda}^{\C}|$.

\item $|\phi_{\lambda}^{\C}|^2$, when properly $L^2$ normalized on
each $\partial M_{\tau}$ is a quantum ergodic sequence on
$\partial M_{\tau}$.

\item The ergodic property of complexified eigenfunctions implies
that the $L^2$ norm of $|\phi_{\lambda}^{\C}|^2$
  on $\partial M_{\tau}$ is asymtotically $\sqrt{\rho}$.
Thus, normalized log moduli of ergodic eigenfunctions are
asympotically maximal PSH functions. They are $\lambda$-Siciak
extremal functions.

\item $\frac{1}{\lambda} \log
\frac{|\phi_{\lambda}^{\C}(\zeta)|^2}{||\phi_{\lambda}^{\C}||^2_{L^2(\partial
M_{\tau}}} \to 0$ for an ergodic sequence of eigenfunctions.
Hence,

\item $\frac{i}{\lambda} \ddbar \log |\phi_{\lambda}^{\C}| \to i
\ddbar \sqrt{\rho}, $

\end{enumerate}
concluding the proof.

\begin{rem} We expect a similar result for nodal lines of Dirichlet or Neumann
eigenfunctions of a piecewise smooth domain $\Omega \subset \R^2$
with ergodic billiards. We also expect similar results for nodal
lines of such domains which touch the boundary (like the angular
`spokes' of eigenfunctions of the disc). In \cite{TZ1}, we proved
that the number of such nodal lines is bounded above by
$C_{\Omega} \lambda$. This is rather like the Donnelly-Fefferman
upper bound. However the example of the disc shows that there is
no lower bound above zero. That is, there are sequences of
eigenfunctions (with low angular momentum) with a uniformly
bounded number of nodal lines touching the boundary. We believe
that ergodicity is a mechanism producing lower bounds on numbers
of zeros and critical points. It causes eigenfunctions to
oscillate uniformly in all directions and hence to produce a lot
of zeros and critical points. We expect this to be one of the
future roles for ergodicity of eigenfunctions.
\end{rem}

\begin{rem} Precursors to Theorem \ref{ZEROS} giving limit distributions of
zero sets for holomorphic eigen-sections of \kahler quantizations
of ergodic symplectic maps on \kahler manifolds can be found in
\cite{NV2,SZ}. In that setting, the zeros become uniformly
distributed with respect to the K\"ahler  form. In the Riemannian
setting, they become uniformly distributed with respect to a
singular $(1,1)$ current relative to the K\"ahler form $\ddbar
\rho$ on $M_{\C}$.
\end{rem}

\end{document}